\newenvironment{customteo}[1]
  {\innercustomteo}
  {\endinnercustomteo}
\newcommand{\ols}[1]{\mskip.5\thinmuskip\overline{\mskip-.5\thinmuskip {#1} \mskip-.5\thinmuskip}\mskip.5\thinmuskip} 
\newcommand{\olsi}[1]{\,\overline{\!{#1}}} 
\newcommand\closure[1]{
  \tctestifnum{\count@stringtoks{#1}>1} 
  {\ols{#1}} 
  {\olsi{#1}} 
}
\long\def\count@stringtoks#1{\tc@earg\count@toks{\string#1}}
\long\def\count@toks#1{\the\numexpr-1\count@@toks#1.\tc@endcnt}
\long\def\count@@toks#1#2\tc@endcnt{+1\tc@ifempty{#2}{\relax}{\count@@toks#2\tc@endcnt}}
\def\tc@ifempty#1{\tc@testxifx{\expandafter\relax\detokenize{#1}\relax}}
\long\def\tc@earg#1#2{\expandafter#1\expandafter{#2}}
\long\def\tctestifnum#1{\tctestifcon{\ifnum#1\relax}}
\long\def\tctestifcon#1{#1\expandafter\tc@exfirst\else\expandafter\tc@exsecond\fi}
\long\def\tc@testxifx{\tc@earg\tctestifx}
\long\def\tctestifx#1{\tctestifcon{\ifx#1}}
\long\def\tc@exfirst#1#2{#1}
\long\def\tc@exsecond#1#2{#2}
\newcommand{\C}{{\mathbb{C}_\infty}}
\newcommand{\K}{{K_\infty}}
\newcommand{\F}{\mathbb{F}}
\newcommand{\T}{\mathbb{T}}
\newcommand{\Z}{\mathbb{Z}}
\renewcommand{\P}{\mathbb{P}}
\newcommand{\m}{\mathfrak{m}}
\newcommand{\A}{\mathcal{A}}
\renewcommand{\O}{\mathcal{O}}
\renewcommand{\L}{\mathcal{L}}
\DeclareMathOperator{\Hom}{Hom}
\DeclareMathOperator{\Pic}{Pic}
\DeclareMathOperator{\Div}{Div}
\DeclareMathOperator{\Spec}{Spec}
\DeclareMathOperator{\Frob}{Frob}
\DeclareMathOperator{\Q}{Frac}
\DeclareMathOperator{\red}{red}
\DeclareMathOperator{\sgn}{sgn}
\DeclareMathOperator{\G}{Gal}
\DeclareMathOperator{\Sf}{Sf}
\newcommand{\into}{\hookrightarrow}
\newtheorem{teo}{Theorem}[section]
\newtheorem*{teo*}{Theorem}
\newtheorem{lemma}[teo]{Lemma}
\newtheorem{prop}[teo]{Proposition}
\newtheorem*{prop*}{Proposition}
\newtheorem{cor}[teo]{Corollary}
\theoremstyle{definition}
\newtheorem{Def}[teo]{Definition}
\newtheorem{oss}[teo]{Remark}
\newtheorem*{oss*}{Remark}
\newtheorem{ex}[teo]{Example}
\newcommand{\catname}[1]{\mathbf{#1}}
\title{A class of functional identities associated to curves over finite fields}
\author{Giacomo Hermes Ferraro\thanks{Dipartimento di Matematica Guido Castelnuovo, Università degli Studi di Roma "La Sapienza"\\\emph{MSC2020}: 11G09\\\emph{Keywords}: Drinfeld modules, Pellarin $L$-series, shtuka functions, special functions}
}
\date{giacomohermes.ferraro@gmail.com}
\begin{document}
\maketitle

\begin{abstract}
Goss zeta values can be found, in some cases, as evaluations of a new type of rigid analytic function on projective curves $X$ over a finite field $\F_q$, called ``Pellarin $L$-series". In the case of genus $0$ and $1$, Pellarin and Green--Papanikolas further determined functional identities for Pellarin $L$-series, in partial analogy with the functional equation of Dirichlet $L$-series.

The aim of this paper is to prove that a generalization of these functional identities holds in arbitrary genus. Our proof exploits the topological nature of divisors on the curve $X$, as well as the introduction of an ``adjoint shtuka function". This allows us to reinterpret Pellarin $L$-series as dual versions of the special functions studied by Angl\`es, Ngo Dac, and Tavares Ribeiro.
\end{abstract}

\section{Introduction}

\subsection{Historical premises and motivation}

Let $\F_q$ be the finite field with $q$ elements. The Carlitz module over the field of rational functions $\F_q(\theta)$ is a functor from the $A$-algebras to the $A$-modules $C:A-\catname{Alg}\to A-\catname{Mod}$, where $A=\F_q[\theta]$, which is meant to be a finite characteristic analogue to the multiplicative group scheme $\mathbb{G}_m:\Z-\catname{Alg}\to\Z-\catname{Mod}$. For any $\F_q[\theta]$-algebra $S$, $C(S)$ is $S$ with a new $\F_q[\theta]$-module structure uniquely determined by the action of $\theta$: $C_\theta(s):=s^q+\theta s$ for all $s\in S$.
\vspace{5mm}

\noindent
\textbf{Gauss--Thakur sums.} Let's denote by $\C$ the completion of an algebraic closure of $\F_q((\theta^{-1}))$: this is a complete algebraically closed field analogous to the field of complex numbers $\mathbb{C}$. Analogously to the classical case, there is a surjective map of $A$-modules $\exp_C:\C\to C(\C)$, with kernel $\tilde{\pi}A\subseteq\C$ for a certain $\tilde{\pi}\in\mathbb{C}_\infty^\times$. Given a nonzero $r\in A$ and a multiplicative character $\chi:(A/rA)^\times\to\closure{\F_q}^\times$, we can define the Gauss--Thakur sum introduced by Thakur in \cite{Thakur88} (in the generality proposed by Gazda and Maurischat in \cite{Gazda}) as an element $G(\chi)\in\C\otimes_{\F_q} \closure{\F_q}$ with the property that $(C_\theta\otimes1-1\otimes\chi(\theta))G(\chi)=0$.
If $\chi$ cannot be lifted to an $\F_q$-algebra homomorphism $A/rA\to\closure{\F_q}$, $G(\chi)=0$ (\cite{Gazda}[Prop. 4.8]); this suggests the existence of a ``universal" Gauss--Thakur sum, independent from $\chi$, whose specialization yields all possible nonzero Gauss--Thakur sums. Pellarin and Angl\`es showed in \cite{AP} that the ``universal" Gauss--Thakur sum is the special function of Anderson--Thakur $\omega=\sum_i c_i t^i\in\C[[t]]$, first introduced by Anderson and Thakur in \cite{AndersonThakur}, with the property that $c_0=1$ and $C_\theta(c_{i+1})=c_i$ for all $i$. It turns out that $\omega$ belongs to the Tate algebra $\C\langle t\rangle$, i.e. it can be evaluated at $t=\xi$ for any $\xi\in\closure{\F_q}$, and that every such evaluation $\omega(\xi)$, as an element of $\C\otimes_{\F_q}\closure{\F_q}$, is a Gauss--Thakur sum (see also \cite{Gazda}[Lemma 4.5]).
\vspace{5mm}

\noindent
\textbf{Pellarin $L$-values.} In analogy with the Riemann zeta function, for any positive integer $s$ the infinite series $\sum h^{-s}\in\C$, where we sum over all monic $h\in A$, is well defined. For any nonzero $r\in A$ and any multiplicative character $\chi:(A/rA)^\times\to\closure{\F_q}^\times$, we can also define the Dirichlet-like $L$-value at $s$ as $L(\chi,s):=\sum h^{-s}\otimes \chi(h)\in\C\otimes_{\F_q}\closure{\F_q}$, following the same principle that Gazda and Maurischat adopted for their generalization of Gauss--Thakur sums. In a seminal paper (\cite{Pellarin2011}), Pellarin introduced the series
\[L(s):=\sum_{\substack{h\in \F_q[t]\\\text{monic}}} \frac{h(t)}{h(\theta)^s}\in\C[[t]],\]
where $s$ is a positive integer. For any $\F_q$-algebra homomorphism $\chi:\F_q[t]\to\closure{\F_q}$, $L(s)$ can be evaluated at $\chi(t)$, obtaining $L(\chi,s)$. In other words, the Pellarin zeta function interpolates (some) Dirichlet-like series $L(\chi,s)$, in the same way as the Anderson--Thakur special function interpolates Gauss--Thakur sums.

\vspace{5mm}

\noindent
\textbf{The functional identity.} The Riemann zeta function (and Dirichlet $L$-functions in general) have a well-known functional equation. The formulation of an analogue for our Dirichlet-like series is still an open problem (see \cite{Goss}[Subsection 8.1]), so let's fix a multiplicative character $\rho:\Z/n\Z^\times\to\mathbb{C}^\times$ with $\rho(-1)=-1$ and consider the evaluation of the functional equation for the classical Dirichlet $L$-function $\L(\closure\rho,s)$ at $s=0$:
\begin{equation}\label{classical eq.}
    \frac{i\pi}{n}\sum_{j\in\Z/n\Z^\times}\rho(j)^{-1}j=\mathcal{G}(\rho)\L(\rho,1),
\end{equation}
where $\mathcal{G}(\rho)=\sum_{j\in\Z/n\Z^\times}\rho(j)^{-1}e^{\frac{2\pi i j}{n}}$ is a Gauss sum, and the left hand side is the explicit formula for $\L(\closure\rho,0)$. Going back to the function field case, in \cite{Pellarin2011}[Thm. 1], using two different methods (one involving modular forms, the other one using some log-algebraicity results of Anderson from \cite{Anderson} and \cite{Anderson2}) Pellarin proved the following identity in $\C\langle t\rangle$:
\begin{equation}\label{eq. Pellarin}
    \frac{\tilde{\pi}}{t-\theta}=\omega L(1).
\end{equation}
If we specialize this identity at an $\F_q$-linear homomorphism $\chi:\F_q[t]\to\closure{\F_q}$ sending $t$ to some $n$-th root of unity, we deduce:
\begin{equation}\label{eval. eq. Pellarin}
    \frac{\tilde{\pi}}{\theta-\theta^{n+1}}\sum_{j=1}^{n}\chi(t)^{-j}\theta^j=G(\chi)L(\chi,1),
\end{equation}
which is remarkably similar to equation (\ref{classical eq.}). In practice, Pellarin's identity (\ref{eq. Pellarin}) interpolates the equation (\ref{eval. eq. Pellarin}) across all suitable characters for the fixed exponent $s=1$, instead of interpolating across all suitable exponents for a fixed character $\chi$, like what happens in the classical setting.

\subsection{Generalization to extensions of \texorpdfstring{$\F_q[\theta]$}{Fq[t]}}\label{subsection Drinfeld}
The constructions of the previous subsection can be generalized to certain finite ring extensions of $\F_q[\theta]$. The aim of this article is to prove a generalization of Pellarin's identity (\ref{eq. Pellarin}) to such rings.

The following notation will be adopted from now on in the article. Let $X$ be a projective, geometrically irreducible, smooth curve of genus $g$ over $\F_q$, with a point $\infty\in X(\F_q)$. We now denote by $A$ the global sections of $X\setminus\{\infty\}$, by $\K$ the completion of $K:=\Q(A)$ at $\infty$, and by $\C$ the completion of an algebraic closure $\closure\K$. We also fix a uniformizer at $\infty\in X$, and denote by $\sgn$ the associated sign function.

A {\em normalized Drinfeld module} $\phi$ is a generalization of the Carlitz module $C$ to the ring $A$ (see Section \ref{notation} for details). Following \cite{Thakur}, we associate to $\phi$ a rational function $f$ on $X\times\Spec(\K)$ called \emph{shtuka function}, with $\sgn(f)=1$ and $\Div(f)=V^{(1)}-V+\Xi-\infty$ where $\Xi:\Spec(\C)\to\Spec(A)\into X$ is given by the canonical inclusion $A\into\C$, $V$ is the so-called \emph{Drinfeld divisor}, an effective divisor of degree $g$ on $X\times\Spec(\K)$, and $V^{(1)}$ denotes the \emph{Frobenius twist} of $V$ (see Definition \ref{def frobenius} and Remark \ref{oss twist div})

Instead of the Tate algebra $\C\langle t\rangle$, we define $\T:=\C\hat\otimes A$, i.e. the completion of the tensor product $\C\otimes_{\F_q} A$ with respect to the Gauss norm induced by the norm on $\C$, endowed with the $A$-module structure induced by $\phi$ on $\C\cong\phi(\C)$. The set of ``\emph{special functions}" $\Sf(\phi):=\{\omega\in\T|\phi_a(\omega)=(1\otimes a)\omega\;\forall a\in A\}$, introduced in \cite{ANDTR} by Angl\`es, Ngo Dac, and Tavares Ribeiro, generalizes the Anderson--Thakur function. On the other hand, the \emph{partial zeta function} $\zeta_I$ relative to a nonzero ideal $I<A$ is defined as the the series $\sum a^{-1}\otimes a\in\T$, where the sum ranges over the nonzero elements of $I$; $\zeta_I$ generalizes the Pellarin $L$-value $L(1)$, and like $L(1)$, it is a rigid analytic function on
the analytification of the affine curve $(X\setminus\{\infty\})\times\Spec(\C)$ (see \cite{Pellarin2021}).
\vspace{5mm}

\noindent
\textbf{Old and new results.} Green and Papanikolas were the first to generalize the functional identity (\ref{eq. Pellarin}) to a wider class of Drinfeld modules: their result (\cite{Green}[Thm. 7.1]) holds for all elliptic curves, i.e. $g(X)=1$. Together with Pellarin's identity, their theorem can be stated as follows.
\begin{teo*}[Green--Papanikolas]
    Suppose $g(X)\leq1$ and fix the unique normalized Drinfeld module $\phi$ with period lattice $\tilde{\pi}A\subseteq\C$ for some $\tilde{\pi}\in\mathbb{C}_\infty^\times$. There is a rational function $h\in H(X)^\times$ with $\sgn(h)=1$ such that:
    \[\Sf(\phi)=\frac{(\tilde{\pi}\otimes1)h}{\zeta_A}\cdot (\F_q\otimes A).\]
\end{teo*}
In the present paper, we prove the following generalization for any datum $(X,\infty,\sgn)$ with $X$ of arbitrary genus and $\infty\in X(\F_q)$. Let's fix an ideal $J<A$ such that $J^{-1}$ is isomorphic to the K\"ahler module of differentials $\Omega$, and let's denote by $H$ the Hilbert class field of $K$.
\begin{customteo}{A}[Theorem \ref{Sf module}]\label{A}
Let $\phi$ be a normalized Drinfeld module of rank $1$ with period lattice $\rho_I I$, where $\rho_I\in\mathbb{C}_\infty^\times$ and $I<A$ is a nonzero ideal. There is a rational function $h\in H(X)^\times$ with $\sgn(h)=1$ such that:
\[\Sf(\phi)=\frac{(\rho_I\otimes1)h}{\zeta_I}(\F_q\otimes IJ).\]
\end{customteo}
\begin{oss*}
    The element $\rho_I\in \C$ is uniquely determined up to a factor in $\F_q^\times$. Moreover, one can check that the right hand side only depends on the ideal classes of $I$ and $J$.
\end{oss*}
As with \cite{Green}[Thm. 7.1], the divisor of $h$ can be made explicit. If we denote by $d_J$ the degree of $J$ and we call $V_*$ the unique effective divisor of degree $g$ such that the divisor $V_*^{(1)}+V-J-\Xi-(2g-d_J-1)\infty$ is principal (see Proposition \ref{con} and Lemma \ref{lemma h_I,J}), that is exactly the divisor of $h$.

It's worth noting that the techniques employed by Green and Papanikolas were tailored to the case $g(X)=1$ -- for example, they used the Weierstrass model of a generic elliptic curve to carry out explicit computations. In the present paper, some computations are only needed in Section \ref{section duality} to find the scalar factor $\rho_I\otimes 1$;  in Section \ref{section special functions}, we prove Theorem \ref{Sf module weak} (a weak version of Theorem \ref{A}, without the explicit constant $\rho_I$) using the purely theoretical results of Section \ref{section topology} and Section \ref{Frobenius and divisors}.

Let's denote by $\cdot^{(1)}:\T\to\T$ the $A$-linear extension of the Frobenius endomorphism $\C\to\C$. If $\phi$ is a normalized Drinfeld module of rank $1$ and $f$ is its shtuka function, it is known that an element $\omega\in\T$ is a special function if and only if $\omega^{(1)}=f\omega$ (see \cite{ANDTR}[Lemma 3.6] and \cite{ANDTR}[Rmk. 3.10]). If $f\in\T^\times$ it's not difficult to construct an invertible special function $\omega\in\T^\times$ as an infinite product; Gazda and Maurischat noticed in \cite{Gazda}[Cor. 3.22] that, if there is an invertible special function, then $\Sf(\phi)\cong A$, leaving as an open question if the converse is true, or in other words how restrictive is the hypothesis that $f\in\T^\times$. In this paper we circumvent this problem entirely and prove that it is possible to construct a special function as an infinite product without assuming that $f\in\T^\times$.
\begin{customteo}{B}[Theorem \ref{omega infinite product}]\label{B}
    Fix a normalized Drinfeld module $\phi$ of rank $1$ with shtuka function $f$. There is some $\alpha\in K_\infty^\times$ such that the following element of $\C\hat\otimes K$ is well defined (up to the choice of a $q-1$-th root of $\alpha$):
\[\omega:=(\alpha\otimes1)^\frac{1}{q-1}\prod_{i\geq0}\left(\frac{\alpha\otimes1}{f}\right)^{(i)}.\] 
Moreover, $\omega\in(\F_q\otimes K)\Sf(\phi)$.
\end{customteo}
In Section \ref{section zeta functions} we introduce the \emph{adjoint shtuka function} $f_*$ relative to a normalized Drinfeld module $\phi$ of rank $1$. It is defined as the unique rational function on $X_\K$ with divisor $V_*-V_*^{(1)}+\Xi-\infty$ and $\sgn(f_*)=1$. In analogy with Theorem \ref{B}, we prove the following identity.
\begin{customteo}{C}[Theorem \ref{functional identity}]\label{C}
    Let $\rho_I I$ be the period lattice of $\phi$, where $I<A$ is a nonzero ideal and $\rho_I\in\mathbb{C}_\infty^\times$, and fix $a_I\in I$ an element of least degree. The following functional identity holds in $\C\hat\otimes K$:
    \[\zeta_I=-(a_I^{-1}\otimes a_I)\prod_{i\geq0}\left((\rho_I a_I\otimes1)^{1-q}f_*^{(1)}\right)^{(i)}.\]
\end{customteo}
Equivalently, we prove the following identity (Proposition \ref{a_I/pi}):
\[\frac{\left((\rho_I^{-1}\otimes1)\zeta_I\right)^{(-1)}}{(\rho_I^{-1}\otimes1)\zeta_I}=f_*.\]

Its similarity with the identity $\omega^{(1)}=f\omega$ defining a special function $\omega$ suggests this as an alternative definition of Pellarin zeta functions. Indeed we can use Proposition \ref{a_I/pi} to deduce the following theorem.

\begin{customteo}{D}[Theorem \ref{dual special function}]
Let $\rho_I I$ be the period lattice of $\phi$, where $I<A$ is a nonzero ideal and $\rho_I\in\mathbb{C}_\infty^\times$, and let $\phi^*:A\to\C[\tau^{-1}]$ denote the adjoint Drinfeld module. Then for all $a\in A$ we have the following identity:
\[\phi^*_a\left((\rho_I^{-1}\otimes1)\zeta_I\right)=(1\otimes a)(\rho_I^{-1}\otimes1)\zeta_I.\]
\end{customteo}

A weak version of Theorem \ref{C} (Theorem \ref{functional identity weak}, without the explicit constant $\rho_I$) is proven in Section \ref{section zeta functions}, while the strong version is contained in Section \ref{section duality}, where we also explore some aspects of the dual nature of special functions and zeta functions.

The structure of the rest of the paper is as follows. In Section \ref{section topology} we describe a functorial way of assigning a compact topology to the $\K$-points of a proper $\F_q$-scheme $Y$. We then discuss some results about divisors of curves in finite characteristic from \cite{Milne}, and deduce a homeomorphism between certain spaces of rational functions and the spaces of their divisors; this allows us to prove statements about the convergence of the former by looking at the latter. Green and Papanikolas had already conjectured that the Jacobian variety and the Drinfeld divisor $V$ would play a role in the generalization of \cite{Green}[Thm. 7.1], and in Section \ref{section topology} and Section \ref{Frobenius and divisors} we explain concretely how they are used.

Finally, in Section \ref{Anderson zeta}, we generalize another theorem of Green and Papanikolas about zeta functions ``\`a la Anderson" (\cite[Thm. 7.3]{Green}), which they used to prove a particular case of a log-algebraicity theorem by Anderson (\cite{Anderson}[Thm. 5.1.1]); the method does not differ substantially from the one used by Green and Papanikolas in \cite{Green}, but it heavily relies on our previous results. Given a normalized Drinfeld module $\phi$ of rank $1$, let $\rho_I I$ be the period lattice of $\phi$, where $I<A$ is a nonzero ideal and $\rho_I\in\mathbb{C}_\infty^\times$, and fix $a_I\in I$ an element of least degree. For any nonzero ideal $J=(a,b)<A$ we follow Hayes' construction in \cite{Hayes} and define $\phi_J\in H[\tau]$ as the monic generator of the left ideal $(\phi_a,\phi_b)<H[\tau]$; we fix a special function $\omega\in \Sf(\phi)$ and define $\chi(J):=\frac{\phi_J(\omega)}{\omega}$, a rational function on $X_H$ which does not depend on the choice of $\omega$. The Anderson zeta function relative to $\phi$ is defined as the following absolutely convergent series in $\K\hat\otimes K$ (see also Definition \ref{def anderson} and Remark \ref{oss anderson}):
\[\xi_\phi:=(a_I\otimes a_I^{-1})\sum_{J<I}\frac{\chi(I)}{\chi(I)(\Xi)}.\]
We prove the following result, where $\omega\in\C\hat\otimes K$ is defined as in Definition \ref{pseudocanonical special function}.
\begin{customteo}{E}[Theorem \ref{xi equation}]
    Fix an ideal $J<A$ of degree $d_J$ such that $J\Omega\cong A$, and denote by $h\in H(X)$ the unique rational function with sign $1$ and divisor equal to $V_*^{(1)}+V-J-\Xi-(2g-d_J-1)\infty$.
    There exists $\omega\in\Sf(\phi)\cdot(\F_q\otimes K)$ such that the following identity holds in $\C\hat\otimes K$:
\[\xi_\phi\omega=(\rho_I a_I\otimes1)\sum_{\sigma\in\G(H/K)}h^\sigma.\]
\end{customteo}

, by $\Omega$ the module of K\"ahler differentials of $A$, by $H$ the Hilbert class field of $K:=\Q(A)$,
For any field $L/\F_q$, we also denote by $X_L$ the base change $X\times\Spec(L)$ and by $L(X)$ the field of rational functions on $X_L$. We fix an inclusion $H\subseteq\K$ and a sign function at $\infty$, i.e. a multiplicative function from $\C(X)^\times$ to $\mathbb{C}_\infty^\times$ (see Remark \ref{oss sign}). Finally, given a rational divisor $D$ on $X_\C$, we denote by $D^{(1)}$ its .

. The natural action of $\C[\tau]$ on $\C$ is extended $A$-linearly to $\T$, and for any $h\in\T$ we define $h^{(n)}:=\tau^n h$; $\C[\tau]$ acts similarly on $\C(X)$, and we adopt the same notation (see also Definitions \ref{def frobenius} and \ref{def twist fun}, and Remark \ref{oss twist fun}).

\section{Notation and fundamental concepts}\label{notation}

Recall the meaning of $X,g,\infty,A,K,H,\K,\C$ from Subsection \ref{subsection Drinfeld}. 
Throughout this paper, it's appropriate to assume $g\geq1$. On one hand, all the proofs also work in the hypothesis $g=0$ with some caveats; on the other hand, this assumption simplifies the language of the paper by avoiding a fringe case, which was already studied in \cite{Pellarin2011}.
We introduce the following additional notation.
\begin{itemize}
    \item[-]The degree map $\deg:K\to\Z$ is defined as the opposite of the valuation at $\infty$, and for all $A$-modules $\Lambda\subseteq K$, for all integers $d$, we define $\Lambda(d)$ (resp. $\Lambda(\leq d)$) the set $\{x\in\Lambda|\deg(x)=d\}$ (resp. $\deg(x)\leq d$).
    \item[-]For any finite field extension $L/\K$, we denote by $(\O_L,\m_L)$ the associated local ring of integers.
    \item[-]Unlabeled tensor products of modules are assumed to be over $\F_q$, while for unlabeled fiber products of schemes the base ring is clear from the context.
    \item[-]If $Y$ is an $R$-scheme and $S$ is an $R$-algebra, we denote by $Y(S)$ the set of morphism of $R$-schemes from $\Spec(S)$ to $Y$, and by $Y_S$ the base change $Y\times\Spec(S)$. If $Y_S$ is integral, we denote by $S(Y)$ the field of rational functions of $Y_S$.
    \item[-]For all complete normed fields $L$, for all $\F_q$-vector spaces $M$, the module $L\otimes M$ is endowed with the sup norm induced by $L$; we denote its completion by $L\hat\otimes M$.
\end{itemize}
\begin{oss}\label{oss sign}
    Let's describe explicitly the sign function for any $h\in\C(X)^\times$. Since $\C(X)$ is the field of fractions of $\C\otimes A$, and the sign is multiplicative, we can assume $h\in\C\otimes A\setminus\{0\}$. We fix a uniformizer $w$ at $\infty\in X_\C$, which induces an immersion $i:\C\otimes A\into\C((w))$: we define $\sgn(h)$ as the leading coefficient of $i(h)\in\C((w))$.
    
    Note that, $\sgn(a)\in\F_q$ for all $a\in A\setminus\{0\}$, and if we write $h=\sum_{i=0}^k c_i\otimes a_i$, with $(a_i)_i$ in $A$ of strictly increasing degree and $(c_i)_i$ in $\mathbb{C}_\infty^\times$, we have:
    \[\sgn(h)=\sgn\left(\sum_{i=0}^k c_i\otimes a_i\right)=\sgn(c_k\otimes a_k)=c_k\sgn(a_k).\]
\end{oss}

In the rest of this section, we present some basic tools following \cite{Goss}.
Let $\C[\tau]$ and $\C[\tau^{-1}]$ be the rings of non-commutative polynomials over $\C$ with the relations $\tau c=c^q\tau$ and $\tau^{-1}c^q=c\tau^{-1}$ for all $c\in\C$. There is a $\F_q$-linear and bijective antihomomorphism $\C[\tau]\to\C[\tau^{-1}]$ sending $\varphi:=\sum_i c_i\tau^i$ to $\varphi^*:=\sum \tau^{-i} c_i$.

A \emph{Drinfeld module} of rank $r$ is a ring homomorphism $\phi:A\to\C[\tau]$ sending $a$ to $\phi_a:=\sum_{i\geq0}a_i\tau^i$ such that, for all $a\in A\setminus\{0\}$:
\[\deg_\tau(\phi_a)=r\deg(a)\text{ and }a_0=a.\]
If moreover $a_{\deg(a)}=\sgn(a)$ for all $a\in A\setminus\{0\}$, we call $\phi$ a \emph{normalized Drinfeld module}.

Fix $\phi,\psi$ Drinfeld modules. An element $f=\sum_i c_i \tau^i\in\C[\tau]$ is said to be an isogeny from $\phi$ to $\psi$ if $f\circ\phi_a=\psi_a\circ f$ for all $a\in A$, and an invertible isogeny is called isomorphism. It is known that every Drinfeld module is isomorphic to a (unique) normalized Drinfeld module.

The \emph{exponential map} relative to a discrete $A$-module $\Lambda\subseteq\C$ is the following analytic function from $\C$ to itself:
\[\exp_\Lambda(x):=x\prod_{\lambda\in\Lambda\setminus\{0\}}\left(1-\frac{x}{\lambda}\right)\in\C[[x]].\]
We can rearrange and write $\exp_\Lambda(x)=\sum_{i\geq0}e_ix^{q^i}$, which coverges for all $x\in\C$; its (bilateral) compositional inverse exists in $\C[[x]]$, is denoted by $\log_\Lambda=\sum_{i\geq0}l_i x^{q^i}$, and is called \emph{logarithmic map}. In particular, both functions can be seen as elements of the noncommutative power series ring $\C[[\tau]]$ ($\exp_\Lambda=\sum_{i\geq0}e_i\tau^i$ and $\log_\Lambda=\sum_{i\geq0}l_i\tau^i$), in which they are inverse to one another.

Finally, we recall that for any Drinfeld module $\phi$ of rank $1$ there is a unique $A$-submodule $\Lambda\subseteq\C$ of rank $1$, which we call the \emph{period lattice} of $\phi$, such that we have the following identity in $\C[[\tau]]$ for all $a\in A$:\[\exp_{\Lambda}\circ(a\tau^0)=\phi_a\circ\exp_{\Lambda}.\]Moreover, this correspondence is bijective, and isomorphic period lattices correspond to isomorphic Drinfeld modules.

\section{Convergence of divisors of rational functions on \texorpdfstring{$X_\K$}{X}}\label{section topology}

Since this section has the fundamental purpose of establishing some useful basic tools, its setting is slightly more general, in that we assume the closed point $\infty\in X$ to be $\F_{q^e}$-rational for some positive integer $e$.

Consider the $d$-th symmetric power $X^{[d]}$ for some positive integer $d$. For all field extensions $L/\F_q$, $X^{[d]}(L)$ is the set of $L$-rational divisors on $X$ of degree $d$. For any effective divisor $D\in X^{[d]}(\F_{q^e})$, for any finite field extension $L/\K$, we endow the space of global sections $H^0(X_L,D)\cong L\otimes_{\F_{q^e}}H^0(X_{\F_{q^e}},D)$ with the natural topology of finite vector space over $L$. The aim of this section is to endow $X^{[d]}(L)$ with a compact topology (see Definition \ref{compact topology}) such that the following proposition holds.

\begin{prop*}[Prop. \ref{convergence of functions and divisors}]

Fix a finite field extension $L/\K$ and an effective divisor $D_-$ in $X^{[d]}(\F_{q^e})$, and consider a sequence $(h_m)_m$ in $H^0(X_L,D_-)$.

If the sequence $(\Div(h_m)+D_-)_m$ converges to $D_+\in X^{[d]}(L)$ in the compact topology, there are $(\lambda_m)_m$ in $L^\times$ such that the sequence $(\lambda_m h_m)_m$ converges in $H^0(X_L,D_-)$ to some nonzero $h$ with $\Div(h)=D_+-D_-$.

If the sequence $(h_m)_m$ converges in $H^0(X_L,D_-)$ to some nonzero $h$, the sequence $(\Div(h_m)+D_-)_m$ converges to $\Div(h)+D_-\in X^{[d]}(L)$ in the compact topology.
\end{prop*}

In the following sections we need a topology on the $L$-points of other projective $\F_q$-schemes (such as the powers $\{X^d\}_{d\geq1}$ and the Jacobian variety $\A$ of $X$). To ensure their good interaction we prove that the compact topology that we define is functorial in Proposition \ref{Frobenius functor}.

\subsection{Functorial compact topology on \texorpdfstring{$\K$}{K}-rational points of \texorpdfstring{$\F_q$}{Fq}-schemes}\label{topology}

Through this subsection, $L$ is a finite field extension of $\K$ with residue field $\F_L\subseteq\O_L$, and $Y$ is a proper $\O_L$-scheme. We aim to construct a functor from proper schemes over $\O_L$ to compact Hausdorff topological spaces, sending $Y$ to $Y(\O_L)=Y(L)$.

\begin{lemma}\label{limit points}
The natural maps $\left(\red_{L,k}:Y(\O_L)\to Y(\O_L/\m_L^k)\right)_{k\geq1}$ induce a bijection $Y(\O_L)\cong\varprojlim_k Y(\O_L/\m_L^k)$.
\end{lemma}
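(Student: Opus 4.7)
The plan is to show the natural map is both injective and surjective; neither direction requires deep machinery.

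For injectivity, suppose $x, x' \in Y(\O_L)$ agree modulo $\m_L^k$ for every $k \geq 1$. Properness of $Y$ implies separatedness, so the diagonal $\Delta_Y \colon Y \to Y \times_{\O_L} Y$ is a closed immersion and the equalizer of $(x, x')$ is a closed subscheme $\Spec(\O_L/I) \into \Spec(\O_L)$ for some ideal $I \subseteq \O_L$. The congruence hypothesis translates to $I \subseteq \m_L^k$ for every $k$, so $I \subseteq \bigcap_k \m_L^k$, which vanishes by the Krull intersection theorem (applicable since $\O_L$ is Noetherian local). Hence $I = 0$ and $x = x'$.

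For surjectivity, take a compatible family $(x_k)_k$ in $\varprojlim_k Y(\O_L/\m_L^k)$. The crucial observation is that each $\Spec(\O_L/\m_L^k)$ is topologically a single point, so the set-theoretic image of $x_k$ is a single point $y_k \in Y$; the compatibility of the family forces $y_k = y_0$ to be independent of $k$. Choose an affine open neighborhood $U = \Spec(B) \subseteq Y$ of $y_0$. Then every $x_k$ factors through $U$, giving a compatible system of $\O_L$-algebra homomorphisms $\varphi_k \colon B \to \O_L/\m_L^k$. Since $\O_L$ is $\m_L$-adically complete, $\O_L \cong \varprojlim_k \O_L/\m_L^k$, and the universal property of the inverse limit assembles the $\varphi_k$ into a single $\O_L$-algebra morphism $\varphi \colon B \to \O_L$. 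Composing with $U \into Y$ yields an $\O_L$-point of $Y$ whose $k$-th reduction equals $x_k$ by construction.

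The only delicate point is ensuring that a single affine open $U$ accommodates every $x_k$ at once; once this is granted, the argument is purely formal. The essential inputs are separatedness of $Y$ (from properness), Noetherianity and $\m_L$-adic completeness of $\O_L$, and the universal property of inverse limits. In particular, the single-point topology of $\Spec(\O_L/\m_L^k)$ bypasses the need for Grothendieck's existence theorem or the valuative criterion in this lemma.
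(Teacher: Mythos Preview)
Your proof is correct. The paper takes a more compressed route: it simply asserts that $\Spec(\O_L)\cong\varinjlim_k\Spec(\O_L/\m_L^k)$ in the category of $\O_L$-schemes and then uses that $\Hom(-,Y)$ turns colimits into limits. Your argument is essentially a direct verification of that colimit statement: the surjectivity step (all $x_k$ land in a common affine open because $\Spec(\O_L/\m_L^k)$ is a point, then glue via completeness) is exactly what one proves to establish that $\Spec$ of a complete Noetherian local ring is the colimit of its Artinian thickenings in schemes. So the two proofs have the same mathematical content, but yours is self-contained where the paper appeals to a standard fact. One small remark: separatedness is not actually needed for injectivity---the same affine-open reduction you use for surjectivity works for injectivity too, since both $x$ and $x'$ must factor through any affine open containing the image of the closed point, and then injectivity of $\O_L\to\varprojlim_k\O_L/\m_L^k$ finishes it.
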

\begin{proof}
Since $\Spec(\O_L)\cong\varinjlim_k\Spec(\O_L/\m_L^k)$, we have:
\begin{align*}
    Y(\O_L)&\cong\Hom_{\O_L}\left(\varinjlim_k\Spec(\O_L/\m_L^k),Y\right)\\
    &\cong\varprojlim_k\Hom_{\O_L}\left(\Spec(\O_L/\m_L^k),Y\right)\cong\varprojlim_k Y(\O_L/\m_L^k).\tag*{\qedhere}
\end{align*}
\end{proof}
\begin{oss}\label{limit topology}
If we endow the spaces $\{Y(\O_L/\m_L^k)\}_k$ with the discrete topology, the limit topology induced on $Y(\O_L)\cong\varprojlim_k Y(\O_L/\m_L^k)$ is Hausdorff. Since $Y$ is finite-type over $\O_L$ and $\O_L/\m_L^k$ is finite for all $k$, $Y(\O_L/\m_L^k)$ is finite for all $k$, so the limit topology makes $Y(\O_L)$ a compact space. Moreover, $Y(\O_L)$ can be endowed with an ultrametric distance $\bar{d}$ as follows:
\[\bar{d}(P,Q):=\max_{k\in\mathbb{N}}\left\{\frac{1}{p^k}\bigg|\red_{L,k}(P)\neq\red_{L,k}(Q)\right\}.\]
\end{oss}
\begin{Def}\label{compact topology}We call \emph{compact topology} the topology induced on $Y(L)=Y(\O_L)$ by the bijection $Y(\O_L)\cong\varprojlim_k Y(\O_L/\m_L^k)$.\end{Def}
\begin{Def}\label{red}
We denote by $\red_L:Y(\O_L)\to Y(\O_L)$ and call \emph{reduction} the map induced by the morphism $\O_L\to\F_L\subseteq\O_L$
\end{Def}

From this point onwards, unless otherwise stated, we interpret the set $Y(L)$ as endowed with the compact topology. Similarly, if $Y'$ is a proper $\F_q$-scheme, the set $Y'(L)=Y'_{\O_L}(L)$ is always endowed with the compact topology.

\begin{prop}\label{Frobenius functor}
The map associating to a proper $\O_L$-scheme $Y$ the topological space $Y(\O_L)$ can be extended to a functor $F_L$.
\end{prop}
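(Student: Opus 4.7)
The plan is to define $F_L$ on morphisms in the only sensible way (post-composition) and then reduce continuity to the inverse-limit description established in Lemma~\ref{limit points}. Concretely, set $F_L(Y) := Y(\O_L)$ endowed with the compact topology, and for a morphism $\varphi : Y \to Y'$ of proper $\O_L$-schemes define
\[
F_L(\varphi) : Y(\O_L) \longrightarrow Y'(\O_L), \qquad P \longmapsto \varphi \circ P.
\]
Functoriality at the level of sets is immediate from the associativity of composition and the fact that $\mathrm{id}_Y \circ P = P$, so the only real content is verifying that each $F_L(\varphi)$ is continuous.

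For continuity, the key observation is that the reduction maps $\red_{L,k}$ are natural in $Y$: for every $k \geq 1$ the square
\[
\begin{tikzcd}
Y(\O_L) \arrow[r, "F_L(\varphi)"] \arrow[d, "\red_{L,k}"'] & Y'(\O_L) \arrow[d, "\red_{L,k}'"] \\
Y(\O_L/\m_L^k) \arrow[r, "\varphi_k"'] & Y'(\O_L/\m_L^k)
\end{tikzcd}
\]
commutes, where $\varphi_k$ is post-composition by $\varphi$ on $\O_L/\m_L^k$-points. Under the identification of Lemma~\ref{limit points}, the compact topology on $Y(\O_L)$ is the initial topology making all the $\red_{L,k}$ continuous (with $Y(\O_L/\m_L^k)$ discrete). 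Hence $F_L(\varphi)$ is continuous as soon as each composite $\red_{L,k}' \circ F_L(\varphi) = \varphi_k \circ \red_{L,k}$ is continuous, which holds trivially because both target and source in the bottom row are finite discrete sets (the finiteness of $Y'(\O_L/\m_L^k)$ follows from $Y'$ being of finite type over $\O_L$ and $\O_L/\m_L^k$ being a finite ring, as already observed in Remark~\ref{limit topology}).

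Since continuous maps compose to continuous maps and the identity morphism clearly induces the identity map on $\O_L$-points, this yields the desired functor $F_L$ from proper $\O_L$-schemes to compact Hausdorff spaces. There is no serious obstacle here: the argument is essentially the formal remark that an inverse limit of finite discrete sets is functorial in the diagram, so once Lemma~\ref{limit points} is in place the statement becomes automatic. The only point where one should be mildly careful is checking that the reduction map $\red_{L,k}$ really is natural in $Y$ — but this is again formal, since $\red_{L,k}$ is induced by the structural map $\Spec(\O_L/\m_L^k) \to \Spec(\O_L)$, which does not depend on $Y$.
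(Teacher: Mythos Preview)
Your proof is correct and follows essentially the same approach as the paper: both define $F_L(\varphi)$ as post-composition by $\varphi$ and deduce continuity from the fact that $\varphi$ induces a compatible system of maps $Y(\O_L/\m_L^k)\to Y'(\O_L/\m_L^k)$, so that $F_L(\varphi)$ is a morphism of the inverse systems identified in Lemma~\ref{limit points}. Your write-up is slightly more explicit about the naturality square and the initial-topology characterization, but the underlying idea is identical.
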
  
\begin{proof}
For every morphism $\varphi:Z\to Y$ of proper $\O_L$-schemes, the induced map $\varphi_{\O_L}:Z(\O_L)\to Y(\O_L)$ induces a system of maps $(\varphi_{\O_L/\m_L^k}:Z(\O_L/\m_L^k)\to Y(\O_L/\m_L^k))_k$ which commute with the transition maps of the diagrams $(Z(\O_L/\m_L^k))_k$ and $(Y(\O_L/\m_L^k))_k$, hence $\varphi_{\O_L}$ is continuous.

If we set $F_L(\varphi):=\varphi_{\O_L}$ for all morphisms, it's easy to check that $F_L$ sends the identity map to the identity map and preserves composition, hence it is a functor. 
\end{proof}
\begin{oss}\label{functor for F_p-schemes}
    We also obtain a functor from proper $\F_q$-schemes to topological spaces, sending a scheme $Y$ to $Y(\O_L)=Y(L)$, by precomposing $F_L$ with the base change $Y\mapsto Y_{\O_L}$.
\end{oss}

\begin{lemma}\label{local homeomorphism}
Let $f:Z\to Y$ be a morphism of proper $\O_L$-schemes. Fix a subset $V\subseteq Y(L)$ with preimage $U\subseteq Z(L)$, such that $F_L(f)|_U:U\to V$ is bijective. Then $F_L(f)|_U$ is a homeomorphism.
\end{lemma}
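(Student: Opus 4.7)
The plan is to reduce to the classical fact that a continuous closed bijection is a homeomorphism. By Remark \ref{limit topology} the space $Z(\O_L)$ is compact and $Y(\O_L)$ is Hausdorff, and by Proposition \ref{Frobenius functor} the map $F_L(f):Z(\O_L)\to Y(\O_L)$ is continuous; hence it is a closed map, since the image of any closed subset $C'\subseteq Z(\O_L)$ is compact, and thus closed in $Y(\O_L)$.

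The core of the argument is then to transfer this closedness from $F_L(f)$ to its restriction $F_L(f)|_U$. Given a closed subset $C\subseteq U$, I would write $C=U\cap C'$ for some closed $C'\subseteq Z(\O_L)$ and establish the identity
\[F_L(f)(C)=V\cap F_L(f)(C').\]
The forward inclusion is immediate, since $F_L(f)(U)\subseteq V$. For the reverse inclusion, if $y\in V$ and $y=F_L(f)(p)$ for some $p\in C'$, then $p\in F_L(f)^{-1}(V)=U$, so $p\in U\cap C'=C$ and $y\in F_L(f)(C)$. This shows $F_L(f)(C)$ is closed in $V$, so the continuous bijection $F_L(f)|_U$ is closed, and its inverse is therefore continuous.

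The mild subtlety worth flagging is that $U$ itself need not be compact, since $V$ is not assumed closed in $Y(\O_L)$; the classical "continuous bijection from compact to Hausdorff" argument therefore cannot be applied directly to $F_L(f)|_U$. The workaround above leans crucially on $U$ being the \emph{full} preimage of $V$ under $F_L(f)$, which is exactly what forces the identity $F_L(f)(U\cap C')=V\cap F_L(f)(C')$ and lets one bootstrap closedness from the ambient map.
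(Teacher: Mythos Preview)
Your proof is correct and follows essentially the same approach as the paper's own argument: both establish that the ambient map $F_L(f)$ is closed (continuous between compact Hausdorff spaces), then use the identity $F_L(f)(U\cap C')=V\cap F_L(f)(C')$, which relies on $U=F_L(f)^{-1}(V)$, to conclude that the restriction is closed and hence a homeomorphism. Your explicit remark on why the direct ``compact-to-Hausdorff'' shortcut fails is a nice addition but not a substantive divergence.
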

\begin{proof}
The map $F_L(f):Z(L)\to Y(L)$ is closed, being a continuous map between compact Hausdorff spaces.
Any closed set of $U$ can be written as $C\cap U$, with $C\subseteq Z(L)$ closed. We have:
\[F_L(f)(C\cap U)=F_L(f)\left(C\cap F_L(f)^{-1}(V)\right)=F_L(f)(C)\cap V,\]
which is closed in $V$ because $F_L(f)(C)$ is closed in $Y(L)$. This means that $F_L(f)|_U$ is closed, and since it induces a bijection between $U$ and $V$, it is a homeomorphism.
\end{proof}

\begin{oss}\label{projective space}
    In the case of the projective space $\P^n$ of dimension $n$ over $\F_q$, the set $\P^n(L)$ is in bijection with $(L^{n+1}\setminus\{0\})/L^\times$; since the latter has a natural topology induced by $L$, the former also does, and it's easy to check that it's the same as the compact topology we defined.
\end{oss}

The following statements show that the functor $F_L$ sends group schemes to topological groups.

\begin{lemma}\label{continuous functor}
The topological spaces $F_L(Y\times_{\O_L} Y)$ and $F_L(Y)\times F_L(Y)$ are naturally isomorphic.
\end{lemma}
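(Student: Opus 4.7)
The plan is to prove this by tracing both sides back to inverse limits of discrete finite sets, using Lemma \ref{limit points} as the bridge, and then invoking the fact that inverse limits commute with finite products both as sets and as topological spaces.

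First, at the level of sets, the universal property of the fiber product gives a natural bijection $(Y\times_{\O_L} Y)(\O_L) = Y(\O_L) \times Y(\O_L)$, and the same bijection for $\O_L/\m_L^k$ in place of $\O_L$ for every $k \geq 1$. This bijection is functorial in $k$ (it commutes with the reduction maps $\red_{L,k}$), so taking the projective limit yields a natural bijection
\[
\varprojlim_k (Y\times_{\O_L} Y)(\O_L/\m_L^k) \;\cong\; \varprojlim_k \left(Y(\O_L/\m_L^k) \times Y(\O_L/\m_L^k)\right).
\]

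Next, I would verify that this bijection is a homeomorphism when both sides are given the limit topology coming from the discrete topology on each factor. By Remark \ref{limit topology}, each $Y(\O_L/\m_L^k)$ is finite, so the product $Y(\O_L/\m_L^k) \times Y(\O_L/\m_L^k)$ is finite and discrete; thus the two sides of the displayed equation carry the same limit topology. Finally, inverse limits commute with finite products in the category of topological spaces, so
\[
\varprojlim_k \left(Y(\O_L/\m_L^k) \times Y(\O_L/\m_L^k)\right) \;\cong\; \left(\varprojlim_k Y(\O_L/\m_L^k)\right) \times \left(\varprojlim_k Y(\O_L/\m_L^k)\right),
\]
as topological spaces. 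Combining these isomorphisms with Lemma \ref{limit points} applied to $Y$ and to $Y\times_{\O_L} Y$ produces the required natural homeomorphism $F_L(Y\times_{\O_L} Y) \cong F_L(Y)\times F_L(Y)$.

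There is no real obstacle here: every step is formal, and the only thing to be careful about is that one commutes the $\Hom$-functor with the fiber product on the source side (universal property) and with the filtered colimit $\Spec(\O_L) \cong \varinjlim_k \Spec(\O_L/\m_L^k)$ on the target side (as in the proof of Lemma \ref{limit points}), and that one argues the product-of-inverse-limits identification is a homeomorphism rather than merely a bijection. Naturality in $Y$ is immediate because each of the intermediate identifications is induced by universal properties and is therefore natural.
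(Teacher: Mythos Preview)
Your proof is correct, but the paper takes a shorter route. Rather than unwinding both sides as inverse limits and invoking that limits commute with finite products, the paper applies the functor $F_L$ (already constructed in Proposition~\ref{Frobenius functor}) to the two projections $\pi_1,\pi_2:Y\times_{\O_L}Y\to Y$; this yields a continuous map $F_L(Y\times_{\O_L}Y)\to F_L(Y)\times F_L(Y)$ whose underlying set map is the canonical bijection. Since both sides are compact Hausdorff, a continuous bijection is automatically a homeomorphism. Your argument is more self-contained in that it does not rely on Proposition~\ref{Frobenius functor}, and it makes the inverse-limit structure explicit; the paper's argument is quicker and reuses the compact--Hausdorff trick that appears again in Lemma~\ref{local homeomorphism}.
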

\begin{proof}
The projections $\pi_1,\pi_2:Y\times Y\to Y$ induce a natural continuous map from $F_L(Y\times_{\O_L} Y)$ to $F_L(Y)\times F_L(Y)$. Since both spaces are compact and Hausdorff, the map is closed; since the underlying function is the natural bijection $(Y\times_{\O_L} Y)(L)\cong Y(L)\times Y(L)$, the map is a homeomorphism.
\end{proof}

\begin{prop}\label{Frobenius group scheme}
If $Y$ is a (commutative) group scheme over $\O_L$, the metric on $Y(L)$ is translation invariant, and makes it into a (commutative) topological group.
\end{prop}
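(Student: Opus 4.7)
The plan is to deduce both assertions by combining functoriality of $F_L$ (Proposition \ref{Frobenius functor}) with its compatibility with products (Lemma \ref{continuous functor}), and then to verify translation invariance directly from the definition of $\bar{d}$ via the functoriality of the reduction maps.

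First I would observe that a commutative group scheme structure on $Y$ amounts to structure morphisms $m:Y\times_{\O_L}Y\to Y$, $i:Y\to Y$, $e:\Spec(\O_L)\to Y$ satisfying the usual axioms. Applying the functor $F_L$ of Proposition \ref{Frobenius functor} gives continuous maps $F_L(m)$, $F_L(i)$, $F_L(e)$; using the natural homeomorphism $F_L(Y\times_{\O_L}Y)\cong F_L(Y)\times F_L(Y)$ of Lemma \ref{continuous functor}, the map $F_L(m)$ is a continuous map $Y(L)\times Y(L)\to Y(L)$, whose underlying set-theoretic operation is precisely the group law on $Y(L)$ (since $F_L$ is the identity on underlying sets of $L$-points). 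Similarly $F_L(i)$ provides a continuous inverse. This immediately yields that $Y(L)$ is a (commutative) topological group.

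Next I would show that the reduction map $\red_{L,k}:Y(\O_L)\to Y(\O_L/\m_L^k)$ is itself a group homomorphism for each $k\geq 1$. This is standard: because $Y$ is a group scheme over $\O_L$, the functor of points $S\mapsto Y(S)$ lands in groups, so the ring quotient $\O_L\onto\O_L/\m_L^k$ induces a group homomorphism $Y(\O_L)\to Y(\O_L/\m_L^k)$, which by construction coincides with $\red_{L,k}$. Fixing $R\in Y(L)$, for arbitrary $P,Q\in Y(L)$ we have $\red_{L,k}(P+R)=\red_{L,k}(Q+R)$ if and only if $\red_{L,k}(P)=\red_{L,k}(Q)$, so the set of $k$ indexing the minimum in the definition of $\bar{d}(P+R,Q+R)$ agrees with the corresponding set for $\bar{d}(P,Q)$. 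Hence $\bar{d}(P+R,Q+R)=\bar{d}(P,Q)$, which is translation invariance.

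There is no substantive obstacle here: the two points of potential concern — compatibility of $F_L$ with the product structure, and the compatibility of the reduction maps with the group law — are respectively supplied by Lemma \ref{continuous functor} and by the fact that being a group scheme is a property of the functor of points. Accordingly I would expect the proof to be essentially a direct assembly of these two observations.
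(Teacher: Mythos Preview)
Your proposal is correct and follows essentially the same approach as the paper: both obtain the topological group structure by applying $F_L$ to the structure morphisms $e,i,m$ via Lemma \ref{continuous functor}, and both prove translation invariance from compatibility of the group law with the finite reductions $Y(\O_L)\to Y(\O_L/\m_L^k)$. The only cosmetic difference is that the paper packages the latter step by realizing each translation $l_P$ as an automorphism of the $\O_L$-scheme $Y$ (hence an isometry by the projective-limit description), whereas you argue directly that each $\red_{L,k}$ is a group homomorphism---the content is the same.
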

\begin{proof}
By Lemma \ref{continuous functor}, we identify $F_L(Y\times_{\O_L}Y)\cong F_L(Y)\times F_L(Y)$ via a natural homeomorphism.

Call $e$ the identity, $i$ the inverse, and $m$ the multiplication of $Y$. Then $F_L(Y)$ has a natural structure of topological group, with identity $F_L(e)$, inverse $F_L(i)$ and multiplication $F_L(m)$, because all the necessary diagrams commute by functoriality. For the same reason, if $Y$ is commutative, $Y(L)$ is also commutative.

To prove the invariance of the metric, we need to show that every translation is an isometry. Fix a morphism of $\O_L$-schemes $P:\Spec(\O_L)\to Y$ (i.e. $P\in Y(\O_L)$), and consider the following:
\[l_P:Y\cong \Spec(\O_L)\times_{\O_L}Y\xrightarrow{P\times id_Y} Y\times_{\O_L}Y\xrightarrow{m}Y,\]
so that $F_L(l_P):Y(L)\to Y(L)$ is the left translation by $P$. It's immediate to check that, if we call $-P$ the inverse of $P$ in $Y(\O_L)$, $l_{-P}$ is the two-sided inverse of $l_P$, therefore they are isomorphisms.
In particular $l_P$ induces a family of bijections $\{Y(\O_L/\m_L^k)\to Y(\O_L/\m_L^k)\}_{k\geq1}$, whose limit is precisely $F_L(l_P)$, hence $F_L(l_P)$ is an isometry. The proof for right translations is essentially the same.
\end{proof}

\begin{cor}\label{series convergence} Suppose that $Y$ is commutative. Denote by addition the group law on $Y(L)$ and by $0$ its identity element. If $(P_i)_{i\in \mathbb{N}}$ is a sequence in $Y(L)$ converging to $0$, then the series $\sum_i P_i$ is a well defined element of $Y(L)$ (i.e. the sequence of partial sums converge).
\end{cor}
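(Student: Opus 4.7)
The plan is to exploit the translation-invariant ultrametric structure on $Y(L)$ established in Proposition \ref{Frobenius group scheme} and Remark \ref{limit topology}, combined with the compactness of $Y(L)$, to reduce convergence of the series to the simple fact that its terms tend to zero.

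First I would denote by $S_n:=\sum_{i=0}^{n}P_i$ the $n$-th partial sum, and consider $\bar d(S_n,S_m)$ for $n>m$. Using the translation invariance of the metric proved in Proposition \ref{Frobenius group scheme} (applied to the translation by $-S_m$), this equals $\bar d(S_n-S_m,0)=\bar d(P_{m+1}+\cdots+P_n,0)$. The metric $\bar d$ from Remark \ref{limit topology} is an ultrametric, and since addition in $Y(L)$ is continuous (again by Proposition \ref{Frobenius group scheme}), the ultrametric triangle inequality together with translation invariance yields
\[\bar d(P_{m+1}+\cdots+P_n,0)\leq\max_{m<i\leq n}\bar d(P_i,0).\]
Concretely, this follows by induction on $n-m$: if $\bar d(P_i,0)<1/q^k$ for each relevant $i$, then every $P_i$ reduces to the identity in $Y(\O_L/\m_L^k)$, hence so does their sum, so the sum itself has distance less than $1/q^k$ from $0$.

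By hypothesis $\bar d(P_i,0)\to 0$, so the right-hand side above tends to $0$ as $m\to\infty$, making $(S_n)_n$ a Cauchy sequence in $Y(L)$. Since $Y(L)$ is compact Hausdorff (Remark \ref{limit topology}), it is complete, so $(S_n)_n$ converges to an element of $Y(L)$, which is by definition the value of the series $\sum_i P_i$.

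The only mildly subtle point, which I would make explicit, is the ultrametric estimate on the norm of a finite sum; everything else is formal. This step is essentially the observation that the reduction maps $\red_{L,k}:Y(\O_L)\to Y(\O_L/\m_L^k)$ are group homomorphisms (by functoriality of $F_L$ applied to the multiplication, inverse, and identity morphisms of $Y$), so the subgroups $\ker(\red_{L,k})$ form a neighborhood basis of $0$ consisting of subgroups — and inside a subgroup, any finite sum of elements still lies in the subgroup.
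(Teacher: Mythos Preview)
Your proof is correct and follows essentially the same approach as the paper's: translation invariance plus the ultrametric plus compactness. The paper's argument is a touch more compressed, observing directly that in an ultrametric space it suffices to check $\bar d(S_k,S_{k-1})\to 0$, which by translation invariance equals $\bar d(P_k,0)\to 0$; your version unpacks the same idea via the bound on $\bar d(S_n-S_m,0)$ and the subgroup structure of the kernels of $\red_{L,k}$, but the content is identical.
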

\begin{proof}
Call $\bar{d}$ the distance on $Y(L)$. Since $\bar{d}$ is ultrametric, we just need the limit of the distances $\bar{d}(S_k,S_{k-1})$ to be $0$, where $S_k:=\sum_{i=0}^k P_i$. Since the metric is translation invariant, $\lim_k \bar{d}(S_k,S_{k-1})=\lim_k \bar{d}(P_k,0)$, which is zero by hypothesis.
\end{proof}

\subsection{The compact topology on the space of divisors}

In this subsection we state some propositions about the symmetric powers of a curve and its Jacobian. Most results are already stated and proven in \cite{Milne}.

Recall the definition of $X$; $S_d$ is the permutation group of $d$ elements. We have the following (see \cite{Milne}[Prop. 3.1, Prop. 3.2]).

\begin{prop}\label{symmetric power}
Fix a positive integer $d$. Consider the natural right action of $S_d$ on $X^d$ and call its quotient $X^{[d]}$. Then $X^{[d]}$ is a proper smooth $\F_q$-scheme.
\end{prop}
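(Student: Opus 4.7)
The plan is to construct $X^{(d)}$ explicitly as the geometric quotient of $X^d$ by the finite group $S_d$, then verify properness and smoothness separately.

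First I would establish existence of the quotient as a scheme. Since $X$ is projective, $X^d$ is projective, and every finite set of points (in particular every $S_d$-orbit) is contained in some affine open $U \subseteq X^d$. Intersecting the translates $\bigcap_{\sigma \in S_d} \sigma U$, which is still affine because $X^d$ is separated, yields an $S_d$-invariant affine open neighbourhood of the orbit. Covering $X^d$ by invariant affines $U_\alpha = \Spec(R_\alpha)$ and gluing the spectra $\Spec(R_\alpha^{S_d})$ on overlaps produces a scheme $X^{(d)}$ together with a finite surjective morphism $\pi : X^d \onto X^{(d)}$.

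Next I would verify properness. The map $\pi$ is finite, so $X^{(d)}$ inherits separatedness and finite-typeness over $\F_q$ from $X^d$. Properness then follows from the standard fact that a separated scheme admitting a surjection from a proper scheme is itself proper, applied to $\pi$.

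The main obstacle will be smoothness, since the $S_d$-action is not free along the diagonal loci and hence a direct étale-quotient argument is unavailable. I would reduce to a local calculation at a closed point $P \in X^{(d)}$ corresponding to a divisor $D = \sum_i n_i P_i$ with the $P_i \in X$ distinct and $\sum_i n_i = d$: pick a preimage $Q \in X^d$ whose stabilizer is $\prod_i S_{n_i}$, acting by permuting the factors within each block of $n_i$ equal coordinates. The completion of the local ring of $X^d$ at $Q$ then decomposes as a completed tensor product of power series rings of the form $\F_q(P_i)[[t_{i,1}, \ldots, t_{i,n_i}]]$, and the stabilizer permutes the variables within each block. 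The classical fact that symmetric polynomials in $n$ variables form a polynomial ring in the elementary symmetric functions extends verbatim to formal power series, so the invariant subring is a complete regular local ring of Krull dimension $d$. Hence $X^{(d)}$ is regular at $P$, and since $\F_q$ is perfect, regularity is equivalent to smoothness at $P$.
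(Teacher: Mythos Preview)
The paper does not prove this proposition at all; it simply cites \cite{Milne}[Prop.~3.1, Prop.~3.2]. Your sketch is essentially the standard argument found there, and the overall strategy (invariant affine cover for existence, finite surjection from $X^d$ for properness, elementary symmetric functions for regularity) is correct.

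One small imprecision worth flagging: your description of the completed local ring at $Q$ as a completed tensor product of rings $\F_q(P_i)[[t_{i,1},\dots,t_{i,n_i}]]$ is only literally valid when the residue fields $\kappa(P_i)$ are all $\F_q$ (or more generally after passing to an algebraically closed base). Over $\F_q$ itself, the tensor product $\kappa(P_i)\otimes_{\F_q}\kappa(P_j)$ need not be a field, so a tuple of closed points of $X$ does not in general determine a single closed point of $X^d$, and the stabilizer computation becomes more delicate. The clean fix is to observe that formation of the quotient $X^d\to X^{(d)}$ commutes with the flat base change $\F_q\to\F_q^{ac}$, so $(X^{(d)})_{\F_q^{ac}}\cong (X_{\F_q^{ac}})^{(d)}$, and then run your local calculation over $\F_q^{ac}$ where every closed point is rational; smoothness then descends since $\F_q$ is perfect.
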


The following result (see \cite{Milne}[Thm. 3.13]) gives us the functorial interpretation of the symmetric power $X^{[d]}$.

\begin{teo}
Consider the functor $\Div^d_X$ which sends an $\F_q$-algebra $R$ to the set of relative effective Cartier divisors of degree $d$ on $X_R$ over $R$ (i.e. effective Cartier divisors on $X_R$ which are finite and flat of rank $d$ over $R$). This functor is represented by $X^{[d]}$.
\end{teo}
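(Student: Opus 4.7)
The plan is to produce a natural isomorphism between the representable functor $\Hom_{\F_q}(-, X^{(d)})$ and $\Div^d_X$. I would do so by first constructing a universal divisor on $X \times X^{(d)}$ and then exhibiting an inverse to the resulting natural transformation via an étale descent argument.

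For the universal divisor, I would start on $X \times X^d$. The $i$-th coordinate projection $X^d \to X$ gives a section $s_i : X^d \to X \times X^d$ whose image $D_i$ is a relative effective Cartier divisor of degree one over $X^d$. Their sum $D := D_1 + \cdots + D_d$ is $S_d$-invariant under the permutation action on the $X^d$ factor. Locally on $X$, if a function $f$ cuts out a section on an affine open, then $D$ is cut out on $X \times X^d$ by the product $\prod_i (1 \otimes \cdots \otimes f \otimes \cdots \otimes 1)$, which is a symmetric element and therefore descends to a regular function on $X \times X^{(d)}$. Gluing these local defining functions produces a relative effective Cartier divisor $\mathcal{D}$ of degree $d$ on $X \times X^{(d)}$, and pullback of $\mathcal{D}$ gives a natural transformation $\eta : \Hom_{\F_q}(-, X^{(d)}) \to \Div^d_X$.

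To invert $\eta$, fix an $\F_q$-algebra $R$ and a relative effective Cartier divisor $D' \subseteq X_R$ that is finite flat of rank $d$ over $R$. The key step is that étale-locally on $\Spec(R)$, the subscheme $D'$ becomes a disjoint union of $d$ sections of $X_R \to \Spec(R)$: after base-changing to a strictly Henselian local ring and restricting to an affine open of $X$ containing the support of $D'$ in the closed fiber, one writes the ideal of $D'$ in terms of a monic polynomial of degree $d$ in a local uniformizer and splits it using Hensel's lemma, using smoothness of $X$ to produce the required affine chart. The resulting $d$ sections give a morphism to $X^d$; since its composition with $X^d \to X^{(d)}$ does not depend on the ordering of the sections, it is canonical under the $S_d$-action and glues, by étale descent, to a morphism $\varphi : \Spec(R) \to X^{(d)}$ with $\varphi^* \mathcal{D} = D'$. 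Uniqueness of $\varphi$ follows because the étale-local splitting is unique up to the $S_d$-action, which is precisely what is quotiented out in passing from $X^d$ to $X^{(d)}$.

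The main obstacle will be justifying the étale-local splitting of $D'$ in full generality. Care is needed when the support of $D'$ meets several distinct closed points of $X_R$ in the fiber (one must first pass to an étale cover of $\Spec(R)$ that separates them into disjoint neighborhoods in $X_R$), and when the local monic polynomial has repeated roots, which forces one to genuinely use strict Henselization rather than merely étale localization. Once the splitting is established, the verification that $\eta$ and the constructed inverse are mutually inverse is a routine check on local equations, since by construction both $D'$ and $\varphi^*\mathcal{D}$ are cut out by the same symmetric function of the local uniformizers of the splitting.
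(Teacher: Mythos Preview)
The paper does not supply its own proof of this statement; it is quoted from Milne's \emph{Jacobian Varieties} (Theorem 3.13 there), so there is no in-paper argument to compare against. Your outline follows a standard strategy, and the construction of the universal divisor $\mathcal{D}$ on $X \times X^{(d)}$ via descent of the symmetric product $\prod_i s_i^*$ is correct.

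There is, however, a genuine gap in your inverse construction. The claim that $D'$ splits, \'etale-locally on $\Spec(R)$, as a disjoint union (or even a sum) of $d$ sections is false. Take $R = k[[u]]$ with $k$ algebraically closed of characteristic $\neq 2$; this ring is already strictly Henselian, so no \'etale cover of $\Spec(R)$ changes anything. On $\P^1_R$ the divisor cut out by $t^2 - u$ is finite flat of rank $2$ over $R$, yet $u$ has no square root in $R$, so it is not a sum of two $R$-sections. Your remark that strict Henselization handles repeated roots is exactly where the argument breaks: Hensel's lemma only lifts \emph{coprime} factorizations, and $t^2$ in the closed fibre admits none.

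The repair is to replace ``\'etale-locally'' by ``fppf-locally''. In the example above, the finite flat base change $R \to R[v]/(v^2 - u)$ yields $t^2 - u = (t-v)(t+v)$; in general one can split $D'$ after pulling back along the finite flat structure map $D' \to \Spec(R)$ and iterating. Since $\Hom(-, X^{(d)})$ is an fppf sheaf, your descent argument then goes through unchanged. Alternatively, one avoids splitting entirely: smoothness of $X$ gives, Zariski-locally on $X$, an \'etale coordinate, so $D'$ is cut out by a monic polynomial whose coefficients land directly in the affine symmetric product $(\mathbb{A}^1)^{(d)} \cong \mathbb{A}^d$; gluing these gives the map to $X^{(d)}$ without ever ordering the points. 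This second route is closer to what one finds in Milne.
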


\begin{cor}\label{cor Milne}
For every field extension $E/\F_q$, $X^{[d]}(E)$ is in bijection with the $E$-subschemes of $X_E$ of degree $d$.
\end{cor}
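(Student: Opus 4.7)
The plan is to deduce the corollary directly from the preceding representability theorem. Setting $R=E$ in the functor $\Div_X^d$, we get $X^{(d)}(E)=\Div_X^d(E)$, which is the set of relative effective Cartier divisors of degree $d$ on $X_E$ over $E$, i.e. closed subschemes $Z\subseteq X_E$ whose ideal sheaf is locally principal and whose structure sheaf is finite and flat of rank $d$ over $E$. Since $E$ is a field, flatness over $\Spec(E)$ is automatic, so the task reduces to identifying these with arbitrary closed $E$-subschemes of $X_E$ of degree $d$, meaning zero-dimensional closed subschemes whose global sections form an $E$-vector space of dimension $d$.

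The only real content is then that, on a smooth curve, the Cartier condition is automatic for such subschemes. First I would observe that smoothness is preserved by arbitrary base change, so $X_E\to\Spec(E)$ is smooth of relative dimension one, and hence $X_E$ is a regular noetherian one-dimensional scheme. Consequently, at every closed point the local ring $\mathcal{O}_{X_E,x}$ is a discrete valuation ring, and any nonzero ideal of finite colength in a DVR is principal. Thus the ideal sheaf of any finite closed subscheme $Z\subseteq X_E$ is locally principal, so $Z$ is automatically an effective Cartier divisor; it is finite and flat of rank $d$ over $E$ precisely when the degree is $d$. Both directions of the bijection are clear on underlying subschemes, so we are done.

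The main point requiring care is ensuring that regularity of $X_E$ survives base change even when $E/\F_q$ is inseparable, which is why one invokes smoothness rather than mere regularity of $X$ over $\F_q$. Beyond this, the proof is a direct translation between the two descriptions of effective Cartier divisors via the representability theorem.
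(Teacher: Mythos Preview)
Your argument is correct. The paper does not supply a proof for this corollary at all; it is simply stated as an immediate consequence of the representability theorem cited from \cite{Milne}. Your write-up makes explicit exactly the two points that are tacitly assumed: that flatness over a field is vacuous, and that on a smooth (hence regular) curve every proper closed subscheme is already an effective Cartier divisor because the local rings are DVRs. The remark about invoking smoothness rather than regularity to survive base change is apt, though here it is harmless since $\F_q$ is perfect. In short, there is nothing to compare: you have written out the standard justification that the paper leaves to the reader.
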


Let's continue with the fundamental property of the Jacobian variety (see \cite{Milne}[Thm. 1.1]).

\begin{teo}
Call $P^0_X$ the natural functor from $\F_q$-algebras to abelian groups such that for any $\F_q$-algebra $R$:
\[P^0_X(R)=\{\L\in\Pic(X_R)|\deg(\L_t)=0\;\forall t\in \Spec(R)\}/\pi_R^*(\Pic(R)).\]
There is an abelian variety $\A$ over $\F_q$, called the Jacobian variety of $X$, and a natural transformation of functors $P^0_X\to\A$ which induces an isomorphism $P^0_X(R)\cong\A(R)$ whenever $X(R)\neq\emptyset$.
\end{teo}
Let's fix a point $\infty'\in X(\F_{q^e})$ with support at $\infty$.
The following result clarifies the relation between the symmetric powers of $X$ and $\A$ (see \cite{Milne}[Thm. 5.2]).
\begin{teo}\label{J^d}
For all $d\geq1$, the point $\infty'\in X_{\F_{q^e}}$ induces a natural morphism of $\F_{q^e}$-schemes $J^d:X_{\F_{q^e}}^{[d]}\to\A_{\F_{q^e}}$. Moreover, the morphism $J^g:X_{\F_{q^e}}^{[g]}\to\A_{\F_{q^e}}$ is birational and surjective.
\end{teo}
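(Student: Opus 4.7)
The plan is to construct $J^d$ at the level of functors of points and appeal to Yoneda, then verify the two claims about $J^g$ using Riemann--Roch.

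Given an $\F_q$-algebra $R$, the theorem just quoted identifies $X^{(d)}(R)$ with the set of relative effective Cartier divisors $D$ of degree $d$ on $X_R$. I would set
\[J^d(R)(D) := \bigl[\O_{X_R}(D - d\cdot\infty_R)\bigr]\in\A(R),\]
where $\infty_R$ is the base change of $\infty\in X(\F_q)$ to $X_R$. Since $D$ and $d\infty_R$ both have relative degree $d$, the line bundle $\O(D - d\infty_R)$ has fiberwise degree zero, and taking its class modulo $\pi_R^*\Pic(R)$ lands exactly in the functor represented by $\A$. The assignment is natural in $R$, so Yoneda's lemma produces the morphism $J^d : X^{(d)}\to\A$.

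For the surjectivity of $J^g$, I would argue on geometric points. Fix an algebraically closed extension $E/\F_q$ and a class $[\L]\in\A(E)$. The twist $\L(g\infty)$ has degree $g$, and Riemann--Roch gives $h^0(\L(g\infty))\geq g - g + 1 = 1$; a nonzero global section yields an effective divisor $D$ of degree $g$ with $\L\cong\O(D - g\infty)$, i.e. $J^g(D) = [\L]$. Together with the properness of $X^{(g)}$, this gives scheme-theoretic surjectivity. For birationality, both $X^{(g)}$ and $\A$ are proper smooth irreducible $\F_q$-schemes of dimension $g$ (the former by Proposition \ref{symmetric power}), so it suffices to show generic injectivity. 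A generic effective divisor $D$ of degree $g$ is non-special: the locus of effective classes in $\Pic^{g-2}(X)$ is the image of $X^{(g-2)}$, of dimension $g - 2 < g$, so a generic class is not effective. Thus $h^0(K - D) = 0$ outside a proper closed subset, and Riemann--Roch yields $h^0(D) = 1$; the complete linear system $|D|$ then consists of $D$ alone, so the fiber of $J^g$ over $J^g(D)$ is a singleton. A generically injective morphism between irreducible projective varieties of the same dimension is birational.

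The main technical hurdle is to check carefully that the rule $D\mapsto[\O(D - d\infty_R)]$ factors through the Jacobian functor as defined: this involves a flatness verification for relative Cartier divisors to ensure the bundle has fiberwise degree zero, and a check that the quotient by $\pi_R^*\Pic(R)$ is well defined. Once the construction is in place, the Riemann--Roch arguments are classical.
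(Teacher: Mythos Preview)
The paper does not prove this theorem at all: it is quoted with a reference to \cite{Milne}[Thm.~5.2], so there is no in-paper argument to compare against. Your sketch is essentially the standard proof one finds in Milne's notes (construct $J^d$ via the functor of points, then use Riemann--Roch for surjectivity and generic non-speciality for birationality), and the construction and surjectivity parts are fine.

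There is, however, one genuine issue in the birationality step. Your concluding sentence, ``A generically injective morphism between irreducible projective varieties of the same dimension is birational,'' is false in positive characteristic: the Frobenius endomorphism of $\P^1$ is bijective on geometric points yet has degree $p$. What you actually need---and what your computation gives---is that the \emph{scheme-theoretic} fiber of $J^g$ over a generic point is a reduced point. This does follow from the identification of the fiber with the complete linear system $|D|\cong\P(H^0(\O_X(D)))$, which is $\P^0=\Spec(E)$ when $h^0(D)=1$; but that identification (the content of Proposition~\ref{fibers}, or Milne's Remark~5.6(c)) is not automatic and should be stated explicitly rather than subsumed into the phrase ``is a singleton.'' Once you know the generic fiber is a reduced point, the map has degree $1$ and is therefore birational.

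A smaller quibble: your dimension count for non-speciality (``the effective locus in $\Pic^{g-2}$ has dimension $g-2<g$'') does not immediately bound the special locus in $X^{(g)}$, since the fibers of $D\mapsto[K-D]$ over that locus may be positive-dimensional. The clean way is to note that if \emph{every} effective $D$ of degree $g$ had $h^0(D)\geq2$, then every fiber of $J^g$ would have dimension $\geq1$, contradicting $\dim X^{(g)}=\dim\A=g$ together with the surjectivity you already proved.
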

\begin{oss}
For every field $E/\F_{q^e}$, at the level of $E$-points the morphism $J^d$ sends an effective divisor $D$ of degree $d$ to the class of $D-d\infty'$.
\end{oss}

Finally, a result on the fibers of the map $J^d$ (see \cite{Milne}[Rmk. 5.6.(c)] and \cite{Hartshorne}[Prop. II.7.12]).

\begin{prop}\label{fibers}
Fix a field extension $E/\F_{q^e}$ and a point $D\in X_{\F_{q^e}}^{[d]}(E)$, define $P:=J^d\circ D\in\A_{\F_{q^e}}(E)$, and call $V$ the $E$-vector space $H^0(X_E,D)$. The fiber $(J^d)^*P$ is naturally isomorphic as an $E$-scheme to $\P(V)$.

For any field extension $E'/E$, for all $f\in E'\otimes_E V\cong H^0(X_{E'},D)$, the isomorphism sends the line $E'\cdot f\in\P(V)(E')$ to $\Div(f)+D\in X^{[d]}(E')$.
\end{prop}

\begin{cor}\label{preimage of h^0=1}
Let $D\in X_{\F_{q^e}}^{[d]}(\closure{E})$ with $h^0(D)=1$. If $J^d\circ D\in\A_{\F_{q^e}}(\closure{E})$ factors through some $P\in\A_{\F_{q^e}}(E)$, $D$ factors through some $D'\in X_{\F_{q^e}}^{[d]}(E)$.
\end{cor}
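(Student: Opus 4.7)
The plan is to study the scheme-theoretic fiber $F := X^{(d)} \times_{\A, P} \Spec(E)$ of $J^d$ over $P$ and to show that $F = \Spec(E)$; the associated morphism $F \hookrightarrow X^{(d)}_E$ will then provide the desired $D' \in X^{(d)}(E)$. Since $J^d$ is a morphism between proper $\F_q$-schemes and $\A$ is separated, $J^d$ is itself proper, so $F$ is proper over $E$.

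The assumption that $J^d \circ D$ factors through $P$ means precisely that $D$ lifts to an $E^{ac}$-point of $F_{E^{ac}} := F \times_E \Spec(E^{ac})$. Applying Proposition \ref{fibers} to $D$ (with $E$ in that statement replaced by $E^{ac}$) gives an isomorphism of $E^{ac}$-schemes $F_{E^{ac}} \cong \P(V)$, where $V$ denotes the $E^{ac}$-vector space of global sections of $\I_D^{-1}$. The hypothesis $h^0(D) = 1$ yields $\dim_{E^{ac}} V = 1$, so $F_{E^{ac}} \cong \Spec(E^{ac})$, a single reduced point.

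The remaining step, which is the crux of the argument, is a descent from $E^{ac}$ to $E$. Since $F_{E^{ac}} \to F$ is surjective, $F$ consists of a single point; combined with properness over $E$, this forces $F = \Spec(R)$ for a local Artinian $E$-algebra $R$. From $R \otimes_E E^{ac} \cong E^{ac}$ we deduce $\dim_E R = 1$, hence $R = E$ and $F = \Spec(E)$. The induced point $D' \in X^{(d)}(E)$ satisfies $D = D'_{E^{ac}}$ by uniqueness of the $E^{ac}$-point of $F_{E^{ac}}$. The main obstacle is therefore this last descent step, but it reduces to the standard fact that a proper $E$-scheme becoming a single reduced $E^{ac}$-point after base change must already be $\Spec(E)$.
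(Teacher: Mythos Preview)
Your proof is correct and follows essentially the same approach as the paper: both arguments identify the fiber of $J^d$ over $P$, use Proposition~\ref{fibers} together with $h^0(D)=1$ to show that its base change to the algebraic closure is a single reduced point, and then descend to conclude that the fiber is $\Spec(E)$. The only cosmetic difference is that the paper first replaces $E^{ac}$ by a finite extension $E'/E$ over which $D$ is defined, whereas you work directly with $E^{ac}$; your more detailed justification of the descent step (via the dimension count $\dim_E R = \dim_{E^{ac}}(R\otimes_E E^{ac}) = 1$) is a welcome clarification of what the paper leaves implicit.
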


\begin{proof}
Since $D$ factors through some finite extension $\Phi:\Spec(E')\to\Spec(E)$, we can assume $D\in X_{\F_{q^e}}^{[d]}(E')$ without loss of generality. By Proposition \ref{fibers}, the pullback of $P\circ\Phi\in\A_{\F_{q^e}}(E')$ along $J^d$ is a morphism $\Spec(E')\to X_{\F_{q^e}}^{[d]}$, hence it is exactly $D$. If $Z\to X_{\F_{q^e}}^{[d]}$ is the pullback of $P$ along $J^d$, $Z\times_{\Spec(E)}\Spec(E')$ is isomorphic to $\Spec(E')$; we deduce that $Z\cong\Spec(E)$, and $D$ factors through $Z\to X_{\F_{q^e}}^{[d]}$.
\end{proof}

With the following proposition we can finally switch between convergence of functions and convergence of divisors, an essential step to prove the functional identities in Theorem \ref{functional identity weak} and Theorem \ref{Sf module weak}.

\begin{prop}\label{convergence of functions and divisors}
Fix a finite field extension $L/\K$ and an effective divisor $D_-$ in $X^{[d]}(\F_{q^e})$, and consider a sequence $(h_m)_m$ in $H^0(X_L,D_-)$.

If the sequence $(\Div(h_m)+D_-)_m$ converges to $D_+\in X^{[d]}(L)$, there are $(\lambda_m)_m$ in $L^\times$ such that $(\lambda_m h_m)_m$ converges in $H^0(X_L,D_-)$ to some nonzero $h$ with $\Div(h)=D_+-D_-$.

If the sequence $(h_m)_m$ converges in $H^0(X_L,D_-)$ to some nonzero $h$, the sequence $(\Div(h_m)+D_-)_m$ converges to $\Div(h)+D_-\in X^{[d]}(L)$.
\end{prop}
\begin{proof}
Call $V:=H^0(X_{\F_{q^e}},D_-)$ and call $Z_d$ the pullback of the closed subscheme $[D_--d\infty']\in\A_{\F_{q^e}}$ along $J^d:X_{\F_{q^e}}^{[d]}\to\A_{\F_{q^e}}$, so that $\Div(h_m)+D_-\in Z_d(L)$ for all $m$. As we noted in Remark \ref{projective space}, $\P(V)(L)$ is homeomorphic to $(H^0(X_L,D_-)\setminus\{0\})/L^\times$ endowed with the quotient topology. On the other hand, by Proposition \ref{fibers} (setting $E=\F_{q^e}$ and $D=D_-$), the $\F_{q^e}$-schemes $\P(V)$ and $Z_d$ are isomorphic; in particular, the induced map $\P(V)(L)\to Z_d(L)$, which sends a line $L\cdot f\in H^0(X_L,D_-)$ to $\Div(f)+D_-$, is a homeomorphism in the compact topology, by Remark \ref{functor for F_p-schemes}.

If the sequence $(\Div(h_m)+D_-)_m$ converges to $D_+\in Z_d(L)$, this proves that the equivalence classes $([h_m])_m$ in $H^0(X_L,D_-)/L^\times$ do converge to an equivalence class $[h]$ whose divisor is $D_+-D_-$. Since the projection is open, we can lift this convergence to $H^0(X_L,D_-)$ up to scalar multiplication.

The map $H^0(X_L,D_-)\setminus\{0\}\to Z(L)$ sending a function $f$ to the effective divisor $\Div(f)+D_-$ is continuous. In particular, if the sequence $(h_m)_m$ converges to a nonzero $h\in H^0(X_L,D_-)$, the sequence $(\Div(h_m)+D_-)_m$ converges to $\Div(h)+D_-\in Z_d(L)$.
\end{proof}

\section{Frobenius and divisors}\label{Frobenius and divisors}

Fix a nonzero ideal $I<A$, with ideal class $\bar{I}\in Cl(A)$; with slight abuse of notation, call $I$ also the corresponding effective divisor of $X$. Call $\Xi\in X(K)$ the morphism $\Spec(K)\to X\setminus\{\infty\}$ corresponding to the canonical inclusion $A\into K$. 

In the first subsection, we recall the notion of Frobenius twist $P^{(1)}$ for a point $P$ in $X^{[d]}(\K)$, and study its behavior with respect to the compact topology. The main result is Proposition \ref{red limit}, where we prove that the sequence $(P^{(m)})_m$ converges to $\red_{\K}(P)$.

In the second subsection, we study the divisor of a rational function $h$ with respect to its expansion $\sum_{i\geq k}c_i u^i$ as an element of $\K\hat\otimes K\cong K((u))$. Among several useful results, the most significant is Proposition \ref{Div and red commute}, where we state the identity $\Div(c_k)=\red_\K(\Div(h))$.

Finally, in the third subsection, we construct the divisors $\{V_{\bar{I},*,m}\}$ for $m\gg0$ and $V_{\bar{I},*}$ in $X^{[g]}(\K)$ (see Proposition \ref{con}), uniquely defined by the following linear equivalences for $m\gg0$:
\begin{align*}
    &\begin{cases}V_{\bar{I},*,m}-V_{\bar{I},*,m}^{(1)}\sim\Xi^{(m)}-\Xi^{(1)}\\\red_\K(V_{\bar{I},*,m})\sim (\deg(I)+g)\infty-I\end{cases};
    &\begin{cases}V_{\bar{I},*}-V_{\bar{I},*}^{(1)}\sim\infty-\Xi\\\red_\K(V_{\bar{I},*})\sim (\deg(I)+g)\infty-I\end{cases}.
\end{align*}
The main result is the convergence of the sequence $(V_{\bar{I},*,m})_{m\gg0}$ to $V_{\bar{I},*}^{(1)}$ in $X^{[g]}(\K)$ (Proposition \ref{V_{I,m}}).

\subsection{Frobenius twist}
In this subsection we define the Frobenius twist for a (proper) $\F_q$-scheme $Y$ and study its behavior with respect to the topology of $Y(\K)$. The fundamental results are Proposition \ref{red limit} and Lemma \ref{Twist and divisors}.
\begin{Def}\label{def frobenius}
Let $Y$ be an $\F_q$-scheme, and $R$ an $\F_q$-algebra. Consider the morphism $\Frob_R:\Spec(R)\to\Spec(R)$ of $\F_q$-schemes induced by raising to the $q^\text{th}$ power. The morphism $\Frob_R$ and the identity on $Y$ induce an endomorphism of $Y_R$ over $\Spec(\F_q)$: we denote it by $F_R^Y:Y_R\to Y_R$.

Call $\pi_Y:Y_R\to Y$ and $\pi_R:Y_R\to\Spec(R)$ the natural projections. For all $P\in Y(R)$, $\overline{P}$ denotes the unique element of $\Hom_R(\Spec(R),Y_R)$ such that $P=\pi_Y\circ\overline{P}$. We call Frobenius twist of $P$, denoted by $P^{(1)}\in Y(R)$, the only element such that $\overline{P^{(1)}}$ is the pullback of $\overline{P}$ along $F_R^Y$. The $n$-th iteration of the twist is denoted by $P^{(n)}$ for all $n\in\mathbb{N}$.
\end{Def}

\begin{lemma}\label{Frob}
    In the notation of Definition \ref{def frobenius}, we have $P^{(1)}=P\circ\Frob_R$.
\end{lemma}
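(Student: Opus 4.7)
The plan is to unwind Definition~\ref{def frobenius} and perform a short diagram chase; the whole argument reduces to the fact that, under the canonical identification $Y_R = Y \times_{\F_q} \Spec(R)$, the endomorphism $F_R^Y$ coincides with $\mathrm{id}_Y \times \Frob_R$. In particular $\pi_Y \circ F_R^Y = \pi_Y$ and $\pi_R \circ F_R^Y = \Frob_R \circ \pi_R$, and the resulting square with vertices $Y_R, Y_R, \Spec(R), \Spec(R)$ is Cartesian.

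First I would make precise the clause ``$\overline{P^{(1)}}$ is the pullback of $\overline{P}$ along $F_R^Y$''. Since $\overline{P}$ is the section of $\pi_R$ corresponding to $P$, pulling it back along $\Frob_R$ using the Cartesian square above produces the unique section $\overline{P^{(1)}}$ of $\pi_R$ satisfying the commutative identity
\[ F_R^Y \circ \overline{P^{(1)}} = \overline{P} \circ \Frob_R. \]
This identity is the operative content of the definition, and it is the only thing the rest of the proof needs.

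To finish, I would post-compose both sides with $\pi_Y$. The left-hand side becomes $(\pi_Y \circ F_R^Y) \circ \overline{P^{(1)}} = \pi_Y \circ \overline{P^{(1)}} = P^{(1)}$, using the first paragraph together with the definition $P^{(1)} = \pi_Y \circ \overline{P^{(1)}}$. The right-hand side becomes $(\pi_Y \circ \overline{P}) \circ \Frob_R = P \circ \Frob_R$, using $P = \pi_Y \circ \overline{P}$. Comparing yields $P^{(1)} = P \circ \Frob_R$, as desired.

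There is no real obstacle here: the argument is a formal consequence of the fact that $F_R^Y$ acts as the identity on the $Y$-factor of $Y_R$. The only point deserving care is translating the universal-property language ``pullback along $F_R^Y$'' into the concrete equation $F_R^Y \circ \overline{P^{(1)}} = \overline{P} \circ \Frob_R$; this is forced by the Cartesian property of the base-change square.
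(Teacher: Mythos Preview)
Your proof is correct and follows essentially the same route as the paper: both extract the identity $F_R^Y \circ \overline{P^{(1)}} = \overline{P} \circ \Frob_R$ from the Cartesian base-change square, then post-compose with $\pi_Y$ using $\pi_Y \circ F_R^Y = \pi_Y$ to conclude. The paper packages this into a single diagram chase, while you spell out the two steps separately, but the content is identical.
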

\begin{proof}
    We have the following cartesian diagram:
\[\begin{tikzcd}
	{\Spec(R)} & {Y_R} & {\Spec(R)} \\
	{\Spec(R)} & {Y_R} & {\Spec(R)}
	\arrow["{\pi_R}", from=1-2, to=1-3]
	\arrow["{\overline{P^{(1)}}}", from=1-1, to=1-2]
	\arrow["{\Frob_R}"', from=1-1, to=2-1]
	\arrow["{\overline{P}}"', from=2-1, to=2-2]
	\arrow["{F_R^Y}"{anchor=west}, from=1-2, to=2-2]
	\arrow["{\pi_R}"', from=2-2, to=2-3]
	\arrow["{\Frob_R}", from=1-3, to=2-3]
	\arrow["\square"{anchor=center, pos=0.5}, draw=none, from=1-2, to=2-3]
	\arrow["\square"{anchor=center, pos=0.5}, shift left=1, draw=none, from=1-1, to=2-2].
\end{tikzcd}\]
    Since $\pi_Y\circ F_R^Y=\pi_Y$, $P^{(1)}=\pi_Y\circ\overline{P^{(1)}}=\pi_Y\circ F_R^Y\circ\overline{P^{(1)}}=\pi_Y\circ\overline{P}\circ\Frob_R=P\circ\Frob_R.$
\end{proof}
\begin{oss}
    In light of Lemma \ref{Frob}, if $\Frob_R$ is an isomorphism, for all $P\in Y(R)$ we can redefine $P^{(k)}\in Y(R)$ as $P\circ(\Frob_R)^k$ for all $k\in\mathbb{Z}$.
\end{oss}

\begin{lemma}\label{Frobenius twist of a power}
Fix a positive integer $d$, an $\F_q$-scheme $Y$, and an $\F_q$-algebra $R$, and consider a point $(P_1,\dots,P_d)\in Y^d(R)$. Its Frobenius twist is $(P_1^{(1)},\dots,P_d^{(1)})$.
\end{lemma}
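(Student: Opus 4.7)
The plan is to reduce everything to Lemma \ref{Frob}, which identifies the Frobenius twist of a point $P \in Y(R)$ with the composition $P \circ \Frob_R$. Once that identification is in hand, the statement becomes a purely formal consequence of the universal property of the product $Y^d$.

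More concretely, I would proceed as follows. Let $P := (P_1,\dots,P_d) \in Y^d(R)$, meaning a morphism $P:\Spec(R) \to Y^d$ whose composition with the $i$-th projection $\pi_i:Y^d \to Y$ recovers $P_i$. By Lemma \ref{Frob} applied to the scheme $Y^d$, we have $P^{(1)} = P \circ \Frob_R$. Composing with $\pi_i$ and using the fact that $\pi_i$ is a morphism of $\F_q$-schemes (so it commutes with the construction of the twist in the trivial sense that we are only composing on the source), we obtain
\[
\pi_i \circ P^{(1)} \;=\; \pi_i \circ P \circ \Frob_R \;=\; P_i \circ \Frob_R \;=\; P_i^{(1)},
\]
again by Lemma \ref{Frob} applied componentwise. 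By the universal property of the product, this identifies $P^{(1)}$ with $(P_1^{(1)},\dots,P_d^{(1)})$, which is the desired equality.

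There is really no obstacle here: the only subtlety worth flagging is that the Frobenius endomorphism on the source $\Spec(R)$ is intrinsic to $R$ and does not interact with the factorization $Y^d = Y \times_{\F_q} \cdots \times_{\F_q} Y$ on the target side. Thus the argument is entirely formal from Lemma \ref{Frob} and the universal property of fiber products, and no further input is needed.
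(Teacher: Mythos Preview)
Your proof is correct and follows essentially the same approach as the paper: both apply Lemma \ref{Frob} to identify $(P_1,\dots,P_d)^{(1)}$ with $(P_1,\dots,P_d)\circ\Frob_R$, compose with the projections $\pi_i$ to obtain $P_i\circ\Frob_R=P_i^{(1)}$, and conclude via the universal property of the product.
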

\begin{proof}
The $i$-th projection $\pi_i:Y^d\to Y$ is such that $\pi_i\circ(P_1,\dots,P_d)=P_i$. By Remark \ref{Frob}:
\[\pi_i\circ\left((P_1,\dots,P_d)^{(1)}\right)=\pi_i\circ(P_1,\dots,P_d)\circ\Frob_R=P_i\circ\Frob_R=P_i^{(1)}. \tag*{\qedhere}\]
\end{proof}
\begin{oss}
The analogous statement, with the same proof, is true for any product of $\F_q$-schemes.
\end{oss}

Let $L/\K$ be a finite field extension with residue field $\F_L\subseteq\O_L$ and $Y$ a proper $\O_L$-scheme. Recall the notation $\red_L$ from Definition \ref{red}.

\begin{prop}\label{red limit}
Fix a point $P\in Y(L)$, and set $k_L$ such that $\#\F_L=q^{k_L}$.
The sequence $(P^{(m k_L+r)})_m$ converges to $\red_L(P)^{(r)}$ in $Y(L)$.
\end{prop}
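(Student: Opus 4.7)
My plan is to reduce the convergence to a computation inside $\O_L$ itself. By Lemma \ref{limit points} and Remark \ref{limit topology}, convergence in $Y(L)$ amounts to stabilization of the reductions $\red_{L,k}$ for every $k\geq 1$, so it suffices to show that for each $k$ there exists $m_0$ with $\red_{L,k}(P^{(mk_L+r)})=\red_{L,k}(\red_L(P)^{(r)})$ for all $m\geq m_0$. Both $\Frob_{\O_L}$ and the composite $\Spec(\O_L)\to\Spec(\F_L)\hookrightarrow\Spec(\O_L)$ used in Definition \ref{red} fix the closed point of $\Spec(\O_L)$ set-theoretically, so all morphisms $P^{(n)}$ and $\red_L(P)^{(r)}$ send the closed point to the same image $\bar P\in Y$ and hence factor through any affine open $\Spec(B)\subseteq Y$ containing $\bar P$. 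This reduces the problem to comparing two $\F_q$-algebra maps $B\to\O_L/\m_L^k$.

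The core arithmetic input concerns the Frobenius on $\O_L$ alone: writing $x\in\O_L$ uniquely as $x=c+y$ with $c\in\F_L$ (via the fixed inclusion) and $y\in\m_L$, the characteristic-$p$ identity gives $x^{q^n}=c^{q^n}+y^{q^n}$. Since $\#\F_L=q^{k_L}$ we have $c^{q^{mk_L+r}}=c^{q^r}$, whereas $y^{q^{mk_L+r}}\in\m_L^{q^{mk_L+r}}\subseteq\m_L^k$ as soon as $q^{mk_L+r}\geq k$. Therefore, for all such $m$, the ring map $\Frob_{\O_L}^{mk_L+r}\colon\O_L\to\O_L/\m_L^k$ coincides with the composite $\O_L\twoheadrightarrow\F_L\hookrightarrow\O_L\xrightarrow{x\mapsto x^{q^r}}\O_L\twoheadrightarrow\O_L/\m_L^k$, independently of $m$. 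This latter composite is precisely what one needs to recover $\red_{L,k}(\red_L(P)^{(r)})$ after precomposing with $P^\#\colon B\to\O_L$.

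Applying this identity to $P^\#(b)$ for every $b\in B$ yields $\red_{L,k}(P^{(mk_L+r)})=\red_{L,k}(\red_L(P)^{(r)})$ for all $m\geq m_0(k)$, which is the required stabilization. The only subtlety to watch is the book-keeping between the $\O_L$- and $\F_q$-scheme structures: the Frobenius twist of Definition \ref{def frobenius} treats $Y$ as an $\F_q$-scheme and $\Frob_{\O_L}$ is not $\O_L$-linear, so $P^\#$ must be handled as an $\F_q$-algebra map. Once this is correctly set up, the argument is essentially immediate, and the main obstacle, such as it is, lies in this framework rather than in the computation on $\O_L$.
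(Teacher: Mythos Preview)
Your proposal is correct and follows essentially the same approach as the paper: pass to an affine open containing the image of the closed point, translate the points into $\F_q$-algebra maps $B\to\O_L$, and use that the $q^n$-power map on $\O_L$ agrees modulo $\m_L^{q^n}$ with reduction to $\F_L$ followed by the $q^n$-power and the fixed section. Your decomposition $x=c+y$ makes explicit exactly the computation the paper leaves implicit in the line ``$(\cdot)^{q^i}\circ\chi_P$ modulo $\m_L^{q^i}$ is the same as $\chi_{\red_L(P)^{(i)}}$'', and your remark about handling $P^\#$ as an $\F_q$-algebra map is well taken.
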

\begin{proof}
Since $\Spec(\O_L)$ only has one closed point, we can choose an open affine subscheme $U\subseteq Y$ with $B:=\O_Y(U)$ such that $P\in U(\O_L)$: $P$ corresponds to a map of $\O_L$-algebras $\chi_P:B\to\O_L$; its reduction modulo $\m_L$, composed with the immersion $\F_L\into\O_L$, is equal to the morphism $\chi_{\red_L(P)}:B\to\O_L$ corresponding to $\red_L(P)$ by Definition \ref{red}. For all $i$, $P^{(i)}$ corresponds to the map $(\cdot)^{q^i}\circ\chi_P$, which modulo $\m_L^{q^i}$ is the same as $\chi_{\red_L(P)^{(i)}}$, hence the projections of $P^{(i)}$ and $\red_L(P)^{(i)}$ onto $Y(\O_L/\m_L^{q^i})$ coincide. Since $\red_L(P)^{(m k_L+i)}=\red_L(P)^{(i)}$ for all $m\geq0$, this proves the convergence.
\end{proof}

\begin{oss}\label{oss twist div}
For any effective divisor $D$ of the curve $X_L$ over $\Spec(L)$, we can define its twist $D^{(1)}$ as the pullback along $F_L^X$. Obviously, if $D=\sum P_i$ with $P_i\in X_L(L_i)$, $D^{(1)}=\sum P_i^{(1)}$.
\end{oss}

\begin{Def}\label{def twist fun}
Let $h$ be a non-constant rational function on $X_L$, i.e. a non-constant morphism of $L$-schemes $X_L\to\P^1_L$. We define the Frobenius twist $h^{(1)}:=h\circ F_L^X$.
\end{Def}
\begin{oss}\label{oss twist fun}
    The field of rational functions of $X_L$ is $\Q(L\otimes A)$, and if $h=\sum_i l_i\otimes a_i$ in $L\otimes A$ is non-constant, $h^{(1)}=\sum_i l_i^q\otimes a_i$. In particular, we can naturally extend the Frobenius twist to the constant rational functions $L\otimes\F_q\subseteq L\otimes A$ as the elevation to the $q$-th power.
\end{oss}

We show that the Frobenius twists of divisors and rational functions are compatible.

\begin{lemma}\label{Twist and divisors}
Let $h\in L(X)^\times$, and call $\Div(h)$ its divisor. Then, $\Div(h^{(1)})$ is equal to $(\Div(h))^{(1)}$.
\end{lemma}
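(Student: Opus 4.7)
The plan is to derive the identity as an immediate consequence of the standard functoriality of divisor formation under morphism pullback, combined with the divisor-twist description supplied in the remark preceding the statement.

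First, I would invoke the general principle that for any finite surjective morphism $\varphi : Y \to Z$ of smooth integral projective curves and any $0 \neq h$ in the function field of $Z$, one has the identity
\[\Div(h \circ \varphi) = \varphi^*(\Div(h))\]
of Weil (equivalently Cartier) divisors on $Y$. I would apply this to $\varphi = F_L^X : X_L \to X_L$, which is a finite surjective endomorphism of the smooth curve $X_L$ because it is obtained from the finite (purely inseparable) map $\Frob_L$ on $\Spec(L)$ by base change with $\id_X$. Since by definition $h^{(1)} = h \circ F_L^X$, this yields
\[\Div(h^{(1)}) = (F_L^X)^*(\Div(h)).\]

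To finish, I would observe that the right-hand side is exactly the Frobenius twist of $\Div(h)$: the remark preceding the statement defines $D^{(1)}$ for an effective divisor $D$ as the pullback of $D$ along $F_L^X$, and this construction extends by linearity to arbitrary (not necessarily effective) divisors of $X_L$. In particular, writing $\Div(h) = D_0 - D_\infty$ as the difference of the zero and pole divisors, additivity of pullback gives $(F_L^X)^*(\Div(h)) = D_0^{(1)} - D_\infty^{(1)} = (\Div(h))^{(1)}$, which is the desired equality.

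The only point that requires care is to confirm that the general functoriality $\Div(h \circ \varphi) = \varphi^*(\Div(h))$ genuinely applies here; this is where I expect the main (though modest) obstacle. Since $F_L^X = \id_X \times \Frob_L$ is a finite surjective morphism of integral smooth curves, divisor pullback is well-defined and compatible with composition of rational maps, so the identity holds. If one prefers a more hands-on verification, it suffices to treat $h \in L \otimes A \setminus \{0\}$ (reducing by writing a rational function as a ratio and using additivity of $\Div$), and then use the explicit formula $h^{(1)} = \sum_i l_i^q \otimes a_i$ from the remark above the statement to compare local multiplicities of $h^{(1)}$ and $h$ at each closed point of $X_L$ via the pullback $F_L^X$ on local rings.
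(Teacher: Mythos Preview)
Your proof is correct and matches the paper's own argument: the paper views $h$ as a morphism $X_L\to\P^1_L$, observes that $(h^{(1)})^*=(F_L^X)^*\circ h^*$ on closed points, and applies this to $P=0$ and $P=\infty$ to recover the zero and pole divisors, which is exactly your functoriality $\Div(h\circ\varphi)=\varphi^*(\Div(h))$ specialized to $\varphi=F_L^X$. Your additional remarks about verifying finiteness of $F_L^X$ and the alternative hands-on check are sound but unnecessary for the paper's purposes.
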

\begin{proof}
If $h$ is a constant function, both sides are the empty divisor. If $h$ is non-constant, for any closed point $P\in\P^1_L$, $(h^{(1)})^*(P)=(F_L^X)^*\circ h^*(P)$; setting $P=[0:1]$ and $P=[1:0]$, since the Frobenius twist on the divisors is induced by the pullback via $F_L^X$, we get our thesis.
\end{proof}

\subsection{Rational functions on \texorpdfstring{$X_\K$}{X} as Laurent series}
Let $L/\K$ be a finite field extension with a uniformizer $u\in\O_L$, call $\F_L\subseteq\O_L$ the residue field of $L$, and $k_L$ the integer such that $\#\F_L=q^{k_L}$. Call $K':=\F_L\otimes K$, i.e. the fraction field of $X_{\F_L}$, and $A':=\F_L\otimes A$.
\begin{oss}
   Since $L(X)$ is the fraction field of $\O_L\otimes K$, which has as a maximal ideal $\m_L\otimes K=(u\otimes1)\O_L\otimes K$, we can endow $L(X)$ with the $\m_L\otimes K$-adic metric. 
\end{oss}

\begin{lemma}\label{K((u))}
There is a natural immersion of fields $L(X)\into K'((u))$, which is a completion with respect to the $\m_L\otimes K$-adic metric. Moreover, it induces an isomorphism between the completion $L\hat\otimes A$ of $L\otimes A\subseteq L(X)$ and $A'[[u]][u^{-1}]$.
\end{lemma}

\begin{proof}
The natural isomorphisms $(\O_L\otimes K/\m_L^k\otimes K\cong K'[u]/u^n)_{n\geq1}$ pass to the limit and to fraction fields, giving a natural isometry between the completion of $L(X)$ and $K'((u))$.

The inclusion $L\otimes A\subseteq A'[[u]][u^{-1}]$ is obvious, and by the previous reasoning it is an isometry with respect to the natural metric of $L\otimes A$; on the other hand each element in $A'[[u]][u^{-1}]$ is the limit of its truncated expansions, which are in $L\otimes A$.
\end{proof}
\begin{lemma}\label{lifting convergence}
For all positive integers $d$, the induced inclusion of $H^0(X_L,d\infty)$, endowed with its natural metric of finite $L$-vector space, into $K'((u))$ is a closed immersion.
\end{lemma}
\begin{proof}
The restriction of the $\m_L\otimes K$-adic metric of $L(X)$ to $H^0(X_L,d\infty)$, which is isomorphic to $ L\otimes A(\leq d)$, is the natural metric of a finite $L$-vector space. By Lemma \ref{K((u))}, the inclusion into $K'((u))$ is an isometry, hence a closed immersion.
\end{proof}
\begin{oss}\label{twist in K((u))}
For all $h\in L(X)\subseteq K'((u))$, if we write $h=\sum_{j\geq m}c_j u^j$, with $c_j\in K'$ for all $j$, then $h^{(1)}=\sum_{j\geq m}c_j^{(1)}u^{qj}$.
\end{oss}

To better understand the usefulness of $K'((u))$, let's state a couple of propositions. First, we prove a very natural result, analogous to Lemma \ref{Twist and divisors} but with the reduction instead of the twist.
\begin{Def}
    For all nonzero $h\in L(X)\subseteq K'((u))$, write $h=\sum_{j\geq m}c_j u^j$ with $c_j\in K'$ for all $j$ and $c_m\neq0$, and set $\red_u(h):=c_m$.
\end{Def}
\begin{prop}\label{Div and red commute}
For all nonzero $h\in L(X)$, $\Div(\red_u(h))=\red_L(\Div(h))$, where both are $\F_L$-rational divisors of $X_{\F_L}$.
\end{prop}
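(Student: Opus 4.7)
The plan is to reduce to the case $h\in L\otimes A\setminus\{0\}$ by a multiplicativity argument, and then apply the functoriality of the compact topology (Proposition \ref{Frobenius functor}) to the morphism $\P(V)\to X^{(d)}$ provided by Proposition \ref{fibers}.

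First I would check that $\red_u$ is multiplicative on $L(X)^\times$ by a direct computation with Laurent expansions: if $h_i=u^{m_i}(c_{m_i,i}+O(u))$ for $i=1,2$, then $h_1h_2=u^{m_1+m_2}(c_{m_1,1}c_{m_2,2}+O(u))$. On the other side, $\red_L$ extends from effective divisors to a group homomorphism $\Div(X_L)\to\Div(X_{\F_L})$: the addition maps $X^{(d)}\times X^{(d')}\to X^{(d+d')}$ are $\F_q$-morphisms, hence by Proposition \ref{Frobenius functor} their induced maps on $L$-points are compatible with $\red_L$; any divisor can be written as a difference $D_1-D_2$ of effective divisors of the same degree (padding with multiples of $\infty$), and one checks that the formula $\red_L(D_1-D_2):=\red_L(D_1)-\red_L(D_2)$ is well posed. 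Both sides of the claimed identity are therefore group homomorphisms in $h\in L(X)^\times$, and since $L(X)=\Q(L\otimes A)$, it suffices to treat the case $h\in L\otimes A\setminus\{0\}$.

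So fix such an $h$, say in $L\otimes A(\leq d)$, and rescale it so that its sup-norm equals $1$; multiplication by a scalar in $L^\times$ leaves both sides of the identity unchanged. By Lemma \ref{K((u))}(2), the $u$-adic valuation of $h$ in $K'((u))$ is then exactly $0$, and the leading coefficient $\red_u(h)$ coincides with the image $\bar h\in\F_L\otimes A(\leq d)$ of $h$ under coordinate-wise reduction modulo $\m_L$. Let $V:=A(\leq d)$; Proposition \ref{fibers} provides a morphism of $\F_q$-schemes $\varphi\colon\P(V)\to X^{(d)}$ sending a line $[f]$ to $\Div(f)+d\infty$. Under the identification of Remark \ref{projective space}, the reduction $\red_L$ on $\P(V)(L)$ acts by coordinate-wise reduction of a normalized representative, so $\red_L([h])=[\bar h]=[\red_u(h)]$. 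Applying Proposition \ref{Frobenius functor} to $\varphi$ then yields
\[\red_L(\Div(h)+d\infty)=F_L(\varphi)(\red_L([h]))=\Div(\red_u(h))+d\infty.\]

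Since $\infty\in X(\F_q)$ is already defined over $\F_q$, we have $\red_L(d\infty)=d\infty$, and subtracting this contribution gives $\red_L(\Div(h))=\Div(\red_u(h))$ for $h\in L\otimes A\setminus\{0\}$. The multiplicativity established in the first step then propagates the identity to arbitrary $h\in L(X)^\times$. The one subtle point is verifying that $\red_u(h)$ really is the coordinate-wise reduction of $h$ in $\P(V)(L)$; this is exactly where Lemma \ref{K((u))}(2) — which identifies the sup-norm on $L\otimes A(\leq d)$ with the $u$-adic valuation inherited from $K'((u))$ — plays the crucial role, after which the statement becomes a formal consequence of the functoriality developed in Section \ref{section topology}.
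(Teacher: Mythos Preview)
Your argument is correct and arguably more direct than the paper's. The paper proceeds via Frobenius twists: it shows that the sequence $(h^{(mk_L)})_m$ converges in $L\otimes A(\leq d)$ to $\red_u(h)$, and then compares the two limits of the sequence $(\Div(h^{(mk_L)})+d\infty)_m$ — one obtained from Proposition \ref{convergence of functions and divisors} (giving $\Div(\red_u(h))+d\infty$), the other from Proposition \ref{red limit} (giving $\red_L(\Div(h)+d\infty)$). Your route bypasses the twist and the convergence argument altogether, observing that $\red_L$ is natural in $Y$ and therefore commutes with the morphism $\P(V)\to X^{(d)}$ of Proposition \ref{fibers}, which reduces the statement to the concrete description of $\red_L$ on $\P(V)(L)$ as coordinate-wise reduction. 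This is cleaner and self-contained; the paper's approach, on the other hand, reuses machinery (Propositions \ref{red limit} and \ref{convergence of functions and divisors}) that is central elsewhere in the paper, so the detour costs nothing in context.

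One small citation issue: you repeatedly invoke Proposition \ref{Frobenius functor} for the fact that $\red_L$ commutes with $F_L(\varphi)$, but that proposition only asserts continuity. The naturality of $\red_L$ you need is an immediate consequence of Definition \ref{red} — it is a composite of two evaluation maps $Y(\O_L)\to Y(\F_L)\to Y(\O_L)$, each of which is functorial in $Y$ — and you should say so rather than point to the functoriality-of-topologies statement.
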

\begin{proof}
Since for any nonzero $h\in L(X)$ there is a positive integer $d$ and $h_+,h_-$ in $\O_L\otimes_{\F_L} A'(\leq d)$ such that $h=\frac{h_+}{h_-}$, we can assume $h\in\O_L\otimes_{\F_L} A'(\leq d)$. Up to a factor in $L^\times$, we can also assume $h=\sum_{i\geq0}c_i u^i\in K'[[u]]$ with $c_0\in A'(\leq d)\setminus\{0\}$. By Remark \ref{twist in K((u))}, the sequence $(h^{(m k_L)})_m$ is equal to $(\sum_{j\geq0}c_j u^{j q^{m k_L}})_m$, hence it converges to $c_0$ in $K'[[u]]$; by Lemma \ref{lifting convergence} this convergence lifts to $L\otimes_{\F_L} A'(\leq d)$. The sequence of divisors $(\Div(h^{(m f_L)})+d\infty)_m$, by Proposition \ref{convergence of functions and divisors}, converges to $\Div(c_0)+d\infty$ in $X^{[d]}(L)$; on the other hand, by Proposition \ref{red limit}, it converges to $\red_L(\Div(h))+d\infty$, hence we have the desired equality.
\end{proof}
We prove now that the immersion $L(X)\into K'((u))$ behaves reasonably well with evaluations.
\begin{prop}\label{conv eval entire rational}
Fix $h\in H^0(X_L,d\infty)$, and expand $h=\sum_i h_{(i)} u^i$ as an element of $K'((u))$; fix $P\in X_{\F_L}(\closure{L})\setminus\{\infty\}$, corresponding to a $\F_L$-linear homomorphism $\chi_P:A'\to \closure{L}$. Then, $h_{(i)}\in A'$ for all $i$ and $h(P)=\sum_i \chi_P(h_{(i)})u^i$.
\end{prop}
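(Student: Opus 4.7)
The plan is to verify the two assertions in turn, in both cases reducing to pure tensors and to the content of Lemma \ref{K((u))}. The first assertion, that $h_{(i)}\in A'$ for all $i$, is a direct specialization of Lemma \ref{K((u))}.4 to $h\in L\otimes A\subseteq L\hat\otimes A$. For the convergence argument below, I would strengthen this along the lines of the proof of Lemma \ref{K((u))}.2: fix an $\F_L$-basis $\{b_1,\dots,b_k\}$ of $A'(\leq d)$, write $h=\sum_j a_j b_j$ with $a_j\in L=\F_L((u))$, expand each $a_j=\sum_i a_{j,i}u^i$ with $a_{j,i}\in\F_L$, and observe $h_{(i)}=\sum_j a_{j,i}b_j\in A'(\leq d)$. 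Thus all Laurent coefficients of $h$ lie in the fixed finite-dimensional $\F_L$-subspace $A'(\leq d)$.

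For the identity $h(P)=\sum_i\chi_P(h_{(i)})u^i$, I would first make precise what $h(P)$ means: the point $P\colon\Spec(L^{ac})\to X_{\F_L}$ lifts, through the $L$-algebra structure on $L^{ac}$, to a point $\tilde P\in X_L(L^{ac})$, and we set $h(P):=h(\tilde P)\in L^{ac}$. The associated ring homomorphism $L\otimes A\to L^{ac}$ extends $\chi_P$ and is $L$-linear, so the left-hand side is $L$-linear in $h$. The right-hand side is also $L$-linear: multiplying $h$ by $\lambda=\sum_j\lambda_j u^j\in L$ convolves Laurent coefficients, $(\lambda h)_{(k)}=\sum_{i+j=k}\lambda_j h_{(i)}$, and since $\chi_P$ is $\F_L$-linear this convolution commutes with $\chi_P$, so $\sum_k\chi_P((\lambda h)_{(k)})u^k=\lambda\sum_i\chi_P(h_{(i)})u^i$. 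It therefore suffices to verify the identity on the $L$-module generators $h=1\otimes a$ with $a\in A$: in that case $h=1\otimes a\in A'\subseteq K'((u))$ gives $h_{(0)}=1\otimes a$ and $h_{(i)}=0$ for $i\neq 0$, while $h(\tilde P)=\chi_P(1\otimes a)$, so both sides agree.

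The only substantive remaining check is convergence of $\sum_i\chi_P(h_{(i)})u^i$ in $L^{ac}$. By the refined first assertion, all coefficients $\chi_P(h_{(i)})$ lie in the finite-dimensional $\F_L$-subspace $\chi_P(A'(\leq d))\subseteq L^{ac}$, which is bounded for the unique extension of the absolute value from $L$; since $|u^i|\to 0$, the series is summable, and its sum lies in the finite-dimensional $L$-subspace of $L^{ac}$ spanned by $\chi_P(A'(\leq d))$, which is complete. I do not foresee any conceptual obstacle: the proposition reduces to routine bookkeeping, with the nontrivial content absorbed into Lemma \ref{K((u))} and the $L$-linearity reduction.
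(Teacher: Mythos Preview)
Your proof is correct and is essentially the same argument as the paper's, just organized differently: both start by writing $h=\sum_j \gamma_j b_j$ with $\gamma_j\in L$ and $\{b_j\}$ an $\F_L$-basis of $A'(\leq d)$ to get $h_{(i)}\in A'(\leq d)$, and both reduce the evaluation identity to the finite case. The paper does this via truncations $h_m:=\sum_j \gamma_{j,m} b_j=\sum_{i\leq m} h_{(i)}u^i$ and then passes to the limit using $h_m(P)\to h(P)$, whereas you invoke $L$-linearity of both sides to reduce to pure tensors $1\otimes a$; these are two phrasings of the same limiting/linearity argument, and your convergence check via boundedness of $\chi_P(A'(\leq d))$ matches the paper's implicit use of the same fact.
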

\begin{proof}
We can write $h=\sum_j \gamma_j a_j$, with $\gamma_j\in L=\F_L((u))$ and $a_j\in A'(\leq d)$, hence $h_{(i)}\in A'(\leq d)$ for all $i$. For all integers $m$ define $\gamma_{j,m}$ as the truncation of $\gamma_j\in\F_L((u))$ at the degree $m$, and define $h_m:=\sum_j \gamma_{j,m} a_j\in K'[u^{\pm1}]$, so that $h_m=\sum_{i\leq m} h_{(i)} u^i$. We have the equalities:
\begin{align*}
    &h_m(P)=\chi_P\left(\sum_j \gamma_{j,m} a_j\right)=\sum_j\gamma_{j,m}\chi_P(a_j);\\
    &h_m(P)=\chi_P\left(\sum_{i\leq m} h_{(i)} u^i\right)=\sum_{i\leq m}\chi_P(h_{(i)})u^i;
\end{align*}
where we used that both summations are finite.
Since the sequence $(\gamma_{j,m})_m$ converges to $\gamma_j$ in $\F_L((u))$ for all $j$, the first equation tells us that the sequence $(h_m(P))_m$ converges to $h(P)$. From the second equation we deduce that the series $\sum_i \chi_P(h_{(i)})u^i$  also converges, and is equal to $h(P)$. 
\end{proof}

\begin{prop}\label{conv eval Tate}
Let $h=\sum_i h_{(i)} u^i\in A'[[u]][u^{-1}]$ be a rational function on $X_L$, and fix $P\in X_{\F_L}(L)$ such that $\red_L(P)\neq\infty$, corresponding to a $\F_L$-linear homomorphism $\chi_P:A'\to\O_L$. Then $P$ is not a pole of $h$, and $h(P)=\sum_i \chi_P(h_{(i)})u^i$.
\end{prop}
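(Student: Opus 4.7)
The plan is to regard $h$ as an element of the Tate algebra $L\hat\otimes A$ via Lemma \ref{K((u))}.4, extend $\chi_P$ to a continuous evaluation on this algebra, compute the extension via Laurent-series truncation, and finally identify the result with the rational-function value $h(P)$ -- which simultaneously forces $P$ not to be a pole.

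Since $\red_E(P)\neq\infty$ and $X_L$ is proper, the valuative criterion yields a unique lift $\tilde P:\Spec(\O_E)\to X_L$ factoring through the affine open $X_L\setminus\{\infty\}=\Spec(L\otimes A)$. This lift is encoded by an $L$-algebra map $\tilde\chi_P:L\otimes A\to\O_E$ extending the given $\chi_P$. Fixing an $\F_q$-basis $\{a_j\}$ of $A$, any $x=\sum_j\gamma_j\otimes a_j\in L\otimes A$ satisfies $|\tilde\chi_P(x)|\leq\max_j|\gamma_j|\cdot\max_j|\chi_P(a_j)|\leq\|x\|$, since $\chi_P(a_j)\in\O_E$; hence $\tilde\chi_P$ has operator norm at most $1$ and extends continuously to $\tilde\chi_P:L\hat\otimes A\to E$. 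The truncations $h_m:=\sum_{i\leq m}h_{(i)} u^i\in L\otimes A$ satisfy $\|h-h_m\|\leq|u|^{m+1}\to 0$ in $L\hat\otimes A$, so continuity of $\tilde\chi_P$ gives
\[\tilde\chi_P(h)=\lim_m\tilde\chi_P(h_m)=\sum_i\chi_P(h_{(i)}) u^i,\]
with convergence in $E$ because $|\chi_P(h_{(i)})|\leq 1$ and $|u^i|\to 0$.

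The main obstacle is to identify $\tilde\chi_P(h)$ with $h(P)$, ruling out poles in the process; this requires bridging the analytic fact $h\in L\hat\otimes A$ with the algebraic structure of $L(X)$. Let $\bar P\in X_L$ be the image of $P$, so $\O_{X_L,\bar P}=(L\otimes A)_\p$ is a DVR with some uniformizer $\pi\in L\otimes A$. Assume for contradiction that $v_{\bar P}(h)=-n<0$: then $h\pi^n\in(L\otimes A)_\p^\times$ can be written $c/d$ with $c,d\in L\otimes A$ and $c(\bar P),d(\bar P)\neq 0$, so $h\cdot(d\pi^n)=c$ in $L(X)$. Since $h$, $d\pi^n$ and $c$ all lie in $L\hat\otimes A$, this identity holds there too, and applying the ring map $\tilde\chi_P$ yields $\tilde\chi_P(h)\cdot 0=c(\bar P)\neq 0$, a contradiction. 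Hence $h\in\O_{X_L,\bar P}$, and writing $h=a/b$ with $b(\bar P)\neq 0$ the same manipulation gives $h(P)=h(\bar P)=a(\bar P)/b(\bar P)=\tilde\chi_P(h)$, completing the proof.
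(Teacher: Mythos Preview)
Your argument is correct, modulo one cosmetic slip: the $L$-algebra map encoding $P$ is $\tilde\chi_P:L\otimes A\to E$, not $\to\O_E$ (since $L\not\subset\O_E$); only its restriction to $A'$ lands in $\O_E$. This does not affect anything, because the bound $|\tilde\chi_P(x)|\leq\|x\|$ still follows from $\chi_P(A')\subset\O_E$, and that bound is all you use.

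Your route differs genuinely from the paper's. The paper clears denominators: it picks $h_-\in\mathcal{L}(N\infty-\Div_-(h))$ with $h_-(P)\neq 0$, sets $h_+:=hh_-$, and reduces to the polynomial case $h_+,h_-\in L\otimes A$ already handled in Proposition~\ref{conv eval entire rational}; the Cauchy-product identity $h_{+,(k)}=\sum_{i+j=k}h_{(i)}h_{-,(j)}$ is then evaluated termwise, and comparing $h_+(P)$ with $h_-(P)$ yields both the non-pole statement and the value. You instead work entirely inside the Banach algebra $L\hat\otimes A$: you observe that $\tilde\chi_P$ is a contractive ring homomorphism, extend it by continuity, compute $\tilde\chi_P(h)$ via the truncations $h_m$, and then exploit multiplicativity on an equation $h\cdot d\pi^n=c$ in $L\hat\otimes A$ to handle both the pole-exclusion and the evaluation in one stroke. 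Your approach is shorter and bypasses Proposition~\ref{conv eval entire rational} altogether; the paper's approach is more hands-on with divisors and keeps the analytic input to a minimum, at the cost of needing the preceding proposition as a stepping stone.
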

\begin{proof}
For $N\gg0$, the space $H^0(X_L, N\infty-\Div_-(h)-P)$ is strictly included in $H^0(X_L, N\infty-\Div_-(h))$; we can fix $h_-$ in their difference, and set $h_+:=h h_-$; by definition, $h_+,h_-\in L\otimes A$.

If we write $h_+=\sum_i h_{+,(i)} u^i$ and $h_-=\sum_i h_{-,(i)} u^i$, we have for all integers $k$ the equation $h_{+,(k)}=\sum_{i+j=k}h_{(i)} h_{-,(j)}$, which commutes with evaluation, being a finite sum. Since $\chi_P$ has image in $\O_L$, the series $\sum_i \chi_P(h_{(i)})u^i$ converges, hence by Proposition \ref{conv eval entire rational} we get the following equation in $\O_L$:
\begin{align*}
     h_+(P)&=\sum_k \chi_P(h_{+,(k)}) u^k=\sum_k\sum_{i+j=k}\chi_P(h_{(i)})\chi_P(h_{-,(j)})u^k\\
     &=\left(\sum_i\chi_P(h_{(i)})u^i\right)\left(\sum_j \chi_P(h_{-,(j)})u^j\right)=\left(\sum_i\chi_P(h_{(i)})u^i\right)h_-(P).
\end{align*}
Since $h_-\in H^0(X_L, N\infty-\Div_-(h))\setminus H^0(X_L, N\infty-\Div_-(h)-P)$, if $P$ is a pole of $h$, then $P$ is a zero of $h_-$ of the same order, and $h_+(P)\neq0$, hence we reach a contradiction by the previous equation. Since $P$ is not a pole of $h$, then $h_-(P)\neq0$, and since $h_+(P)=h(P)h_-(P)$ we get:
\[h(P)=\frac{h_+(P)}{h_-(P)}=\sum_i\chi_P(h_{(i)})u^i. \tag*{\qedhere}\]
\end{proof}
\begin{prop}\label{A[[u]][u^{-1}]}
Let $h=\sum_i h_{(i)}u^i\in K'((u))$ be a rational function on $X_L$. Then, $h$ is in $A'[[u]][u^{-1}]$ if and only if all its poles reduce to $\infty$.
\end{prop}
\begin{proof}
Suppose $h\in A'[[u]][u^{-1}]$ and take a pole $P\in X_L(E)$ of $h$, where $E/L$ is a finite field extension. Define $A'':=\F_E\otimes A$ and $K'':=\F_E\otimes K$, where $\F_E$ is the residue field of $E$, and fix a uniformizer $v$ of $\O_E$. The natural immersion $K'((u))\subseteq K''((v))$ sends $h$ into $A''[[v]][v^{-1}]$; applying Proposition \ref{conv eval Tate} to the function $h$, defined over the field $E$, $\red_E(P)=\infty$. 

If vice versa $h\not\in A'[[u]][u^{-1}]$, call $m$ the least integer such that $h_{(m)}\not\in A'$ and set $h':=\sum_{i<m} h_{(i)} u^i$.  By Lemma \ref{Div and red commute}:
\[\Div(h_{(m)})=\Div(\red_u(h-h'))=\red_L(\Div(h-h')),\]
therefore, since $h_{(m)}\not\in A'$, $h-h'$ has a pole at a point $P$ which does not reduce to $\infty$; on the other hand, since $h'\in A'[[u]][u^{-1}]$, $h'$ does not have a pole at $P$, hence $P$ is a pole of $h=(h-h')+h'$.
\end{proof}

\begin{cor}\label{cor conv eval Tate}
Let $h=\sum_i h_{(i)}u^i\in A'[[u]][u^{-1}]$ be a nonzero rational function on $X_L$, and suppose that the coefficients $(h_{(i)})_i$ are all contained in some maximal ideal $P< A'$. Then $P$, as a closed point of $X_L$, is a zero of $h$.
\end{cor}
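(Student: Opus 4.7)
The plan is to reduce the statement directly to Proposition \ref{conv eval Tate} by evaluating $h$ at every closed point of $X_L$ lying above $P$. Since $h$ is nonzero, some coefficient $h_{(i)}$ is nonzero, so $P$ is a nonzero prime of the Dedekind domain $A'$, hence maximal; in particular $A'/P$ is a finite field extension of $\F_L$. Any closed point $Q$ of $X_L$ lying over $P$ is given by a ring homomorphism $\chi_Q \colon A' \to \overline{L}$ with $P \subseteq \ker \chi_Q$, so $\chi_Q$ factors as $A' \twoheadrightarrow A'/P \hookrightarrow \overline{\F_L} \subseteq \overline{L}$.

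Next I would choose as ambient field of evaluation the compositum $E := L \cdot (A'/P)$ inside $\overline{L}$. Then $E/L$ is a finite extension and $\chi_Q(A') \subseteq \F_E \subseteq \O_E$, where the last inclusion is the fixed section of the reduction map from Definition \ref{red}. Since $\chi_Q$ already takes values in the residue field copy $\F_E$, its reduction modulo $\m_E$ equals $\chi_Q$ itself, so $\red_E(Q) = Q$; and because $\chi_Q$ is defined on all of $A'$, the point $Q$ is not $\infty$, hence $\red_E(Q) \ne \infty$. The hypotheses of Proposition \ref{conv eval Tate} are therefore satisfied.

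Applying that proposition, $Q$ is not a pole of $h$ and
\[h(Q) = \sum_i \chi_Q(h_{(i)})\, u^i.\]
By hypothesis each $h_{(i)}$ lies in $P = \ker \chi_Q$, so every term vanishes and $h(Q) = 0$. Since $\F_L$ is perfect, the base change of $P$ along $\F_L \hookrightarrow L$ is a reduced closed subscheme of $X_L$, i.e.\ a disjoint union of closed points; we have shown that $h$ vanishes at each of them, which is precisely what it means for $P$, viewed as a closed point (resp.\ closed subscheme) of $X_L$, to be a zero of $h$.

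I do not anticipate any real obstacle here: the entire content of the argument is setting up a finite extension $E/L$ in which $Q$ has integral coordinates and coincides with its own reduction, after which the conclusion is immediate from Proposition \ref{conv eval Tate} and the assumption that the coefficients $h_{(i)}$ all lie in $P$.
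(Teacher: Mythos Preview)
Your proof is correct and follows the same approach as the paper: pick a point $Q$ over $P$ in a suitable finite extension $E/L$, verify the hypothesis $\red_E(Q)\neq\infty$, and apply Proposition \ref{conv eval Tate}. The paper's version is terser---it takes a single $Q\in X_L(\F_P L)$ with support at $P$ and concludes directly---whereas you spell out why $\red_E(Q)\neq\infty$ and why vanishing at one (equivalently, every) geometric point of the reduced subscheme cut out by $P$ gives the conclusion; these are welcome clarifications but not a different method.
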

\begin{proof}
Take a point $Q\in X_L(\closure{L})$ with support at $P$. By Proposition \ref{conv eval Tate}, we get the identity $h(Q)=\sum_i h_{(i)}(Q)u^i=0$, hence $P$ is a zero of $h$.
\end{proof}

\subsection{Notable divisors and convergence results}
As foreshadowed by Lemma \ref{Twist and divisors}, in this subsection we explore the relation between Frobenius twists, divisors, and the compact topology.

\begin{lemma}[Drinfeld's vanishing lemma]\label{Drinfeld}
Let $E/K$ be a field extension, $W$ a point in $X^{[d]}(E)$ for some $d\leq g$, $P,Q\in X(E)$. Suppose that $[W-W^{(m)}]=[P-Q]$, where $P\neq Q^{(sm)}$ for $0\leq s+d\leq2g$; then $d=g$ and $h^0(W)=1$.
\end{lemma}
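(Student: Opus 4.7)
The strategy is to iterate the given equivalence and apply Riemann--Roch. After replacing $E$ by its perfect closure (harmless since only linear equivalence is involved), we may assume Frobenius is invertible on $X(E)$, so $Q^{(sm)}$ is defined for all $s\in\mathbb{Z}$.

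First, summing the Frobenius twists of $W\sim W^{(m)}+P-Q$, one proves by induction on $k\geq 1$ that
\[
V_k:=W+\sum_{i=0}^{k-1}Q^{(im)}\;\sim\;W^{(km)}+\sum_{i=0}^{k-1}P^{(im)}=:V'_k,
\]
both effective of degree $d+k$. Explicitly, if $h\in E(X)^\times$ has $\Div(h)=W^{(m)}+P-W-Q$, then $h_k:=\prod_{i=0}^{k-1}h^{(im)}$ satisfies $\Div(h_k)=V'_k-V_k$. The key combinatorial observation is that the hypothesis $P\neq Q^{(sm)}$ for $s\in[-d,2g-d]$ rules out any coincidence $P^{(jm)}=Q^{(im)}$ with $i,j\in[0,k-1]$ whenever $k\leq d+1$: such a coincidence would give $P=Q^{((i-j)m)}$ with $i-j\in[-d,d]\subseteq[-d,2g-d]$. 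Thus for $k=d+1$ the ``tail'' divisors $\sum P^{(im)}$ and $\sum Q^{(im)}$ have disjoint supports, and $V_k,V'_k$ are effective of degree $2d+1$ in the same linear equivalence class.

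I would then suppose $d<g$ or $h^0(W)\geq 2$ and derive a contradiction by producing too many linearly independent sections in $\mathcal{L}(V_k)$. The natural building blocks are the inclusion $\mathcal{L}(W)\hookrightarrow\mathcal{L}(V_k)$ contributing $h^0(W)$ sections; the function $h_k$ itself, which is independent of $\mathcal{L}(W)$ because its poles at the $Q^{(im)}$'s cannot be cancelled by sections from $\mathcal{L}(W)$ (by the disjointness just proved); and the translates $\phi h_k$ for $\phi\in\mathcal{L}(W^{(km)})$, obtained from the multiplicative isomorphism $\mathcal{L}(V'_k)\xrightarrow{\cdot h_k}\mathcal{L}(V_k)$. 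Riemann--Roch gives
\[
h^0(V_k)=2d+2-g+h^0(K_X-V_k),
\]
and I would show that the dimension of $\mathcal{L}(W)+h_k\cdot\mathcal{L}(W^{(km)})$, which is essentially a direct sum by the disjointness of supports, exceeds this bound whenever $d<g$ or $h^0(W)\geq 2$, forcing $d=g$ and $h^0(W)=1$.

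The main obstacle I foresee is the delicate dimension accounting in the borderline cases (large $g$ with $h^0(W)$ barely $\geq 2$, or $d=g-1$). Here Serre duality $h^1(V_k)=h^0(K_X-V_k)$ combined with Clifford's theorem applied to the potentially special divisor $V_k$ will be essential to tighten the count, as will a careful verification -- using the disjoint-support hypothesis pointwise -- that the sections built from $\mathcal{L}(W)$ and $h_k\cdot\mathcal{L}(W^{(km)})$ remain linearly independent rather than collapsing through unexpected algebraic relations.
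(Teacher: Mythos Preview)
Your chain $V_k=W+\sum_{i=0}^{k-1}Q^{(im)}$ is exactly the paper's $W_k$, so the setup is right, but the plan to stop at one value of $k$ and overcount sections via $\mathcal{L}(W)+h_k\cdot\mathcal{L}(W^{(km)})$ does not close. Even granting that this sum is direct, its dimension is at most $2h^0(W)$; for $d=g$ and $k=d+1$ one has $\deg V_k=2g+1$, so $h^0(V_k)=g+2$, and $2h^0(W)>g+2$ would require $h^0(W)>g/2+1$, far stronger than the $h^0(W)\geq2$ you are trying to rule out. Clifford's inequality goes the wrong way here (it \emph{upper}-bounds $h^0$), and Serre duality does not supply the missing sections. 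The ``main obstacle'' you flag is not a borderline case but the generic one.

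The missing idea is to argue \emph{inductively along the whole chain} rather than at a single $k$. For each $i$ in the range $[-d,2g-d-1]$ one has both $W_{i+1}=W_i+Q^{(im)}$ and $W_{i+1}\sim W_i^{(m)}+P$, so $\mathcal{L}(W_i)$ and $\mathcal{L}(W_i^{(m)})$ both sit inside $\mathcal{L}(W_{i+1})$. These two subspaces have the same dimension (Frobenius is a $q$-linear isomorphism between them), so if they are \emph{distinct} then $h^0(W_{i+1})\geq h^0(W_i)+1$. Distinctness amounts to $W_i\not\sim W_i^{(m)}$, which unravels to $P\neq Q^{(im)}$ --- precisely the hypothesis, used pointwise for each $i$ in the range (this is why the hypothesis runs over $s\in[-d,2g-d]$, not just $[-d,d]$). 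Starting from the least $k\in[-d,0]$ with $h^0(W_k)=1$ and running up to $i=2g-d-1$, where $\deg W_i=2g-1$ forces $h^0=g$, one obtains $g=1+(2g-d-1-k)$, hence $d+k=g$; combined with $k\leq0$ and $d\leq g$ this forces $k=0$ and $d=g$. No disjoint-support bookkeeping, no Clifford, no Serre duality is needed.
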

\begin{proof}
Call $W_0:=W$ and set $W_{i+1}=W_i+Q^{(im)}$ for all $i\in\mathbb{Z}$. Note that, since $\deg(W_k)=d+k$, $h^0(W_{-d-1})=0$ and $h^0(W_{2g-d-1})=g$. For all $i$, we have the inequalities $h^0(W_i)\leq h^0(W_{i+1})\leq h^0(W_i)+1$, so there is a least integer $k\in[-d,0]$ such that $h^0(W_k)=1$. 

Let's prove that for all $i\in[-d,2g-d-1[$, if $h^0(W_i)\geq1$, then $h^0(W_{i+1})=h^0(W_i)+1$. We have two relations:
\begin{align*}
    W_{i+1}&=(W+Q+\dots+Q^{((i-1)m)})+Q^{(im)}=W_i+Q^{(im)},\\
    W_{i+1}&=(W+Q^{(m)}+\dots+Q^{(im)})+Q=(W_i^{(m)}-W^{(m)}+W)+Q\sim W_i^{(m)}+P;
\end{align*}
they imply that $H^0(X_E,W_i^{(m)})\subseteq H^0(X_E,W_{i+1})$ and $H^0(X_E,W_i)\subseteq H^0(X_E,W_{i+1})$. To prove that those inclusions are strict, we need that $H^0(X_E,W_i)\neq H^0(X_E,W_i^{(m)})$ as subspaces of $H^0(X_E,W_{i+1})$; they have the same dimension because the $m$-th Frobenius twist induces an isomorphism between the two vector spaces, so we just need $W_i\not\sim W_i^{(m)}$, but:
\[W_i\not\sim W_i^{(m)}\Leftrightarrow W-W^{(m)}\not\sim Q^{(im)}-Q\Leftrightarrow P\not\sim Q^{(im)}\Leftrightarrow P\neq Q^{(im)},\]
which is implied by our hypothesis. In particular, since $W_{2g-d-1}$ has degree $2g-1$, we get that
\[g=h^0(W_{2g-d-1})=h^0(W_k)+2g-d-1-k=2g-d-k,\]
therefore $g=d+k$; but $k\leq0$ and $d\leq g$ implies $d=g$ and $k=0$, therefore $h^0(W)=1$. 
\end{proof}

The previous lemma ensures that if such a divisor $W$ exists, it has no other effective divisors in its same equivalence class. On the other hand, the existence of such $W$ in some particular cases is ensured by the following results. 

As usual, let $L$ be a finite field extension of $\K$, with residue field $\F_L\subseteq\O_L$ and $q^{k_L}:=\#\F_L$

\begin{lemma}\label{finite covering}
Call $\A_0(L)$ the kernel of $\red_L:\A(L)\to\A(\F_L)$ (which is a continuous homomorphism). The map $\A(L)\to\A_0(L)\times\A(\F_L)$ sending a point $D$ to the couple $(D-D^{(k_L)},\red_L(D))$ is an isomorphism of topological groups.
\end{lemma}
\begin{proof}
The map is obviously a continuous group homomorphism. Since domain and codomain are both compact and Hausdorff, it's sufficient to prove bijectivity.

On one hand, to prove injectivity, if we suppose $D-D^{(k_L)}=0$ we have $D\in\A(\F_L)$, so if $\red_L(D)=0$ we can deduce that $D=0$.

On the other hand, to prove surjectivity, we fix $(D_0,\tilde{D})\in\A_0(L)\times\A(\F_L)$ and show that they are the image of some $D\in\A(L)$. By Proposition \ref{red limit} we have that the sequence $(D_0^{(i k_L)})_i$ converges to $\red_L(D_0)=0$, hence by Corollary \ref{series convergence} the series $\tilde{D}+\sum_{i\geq0}D_0^{(i k_L)}$ converges to some point $D\in\A(L)$. Since the Frobenius twist and the reduction $\red_L$ are continuous endomorphisms of $\A(L)$, we get the following equations:
\begin{align*}
    &D-D^{(k_L)}=\tilde{D}+\sum_{i\geq0}D_0^{(i k_L)}-\tilde{D}^{(k_L)}-\sum_{i\geq1}D_0^{(i k_L)}=D_0;&\red_L(D)=\red_L(\tilde{D})=\tilde{D},
\end{align*}
hence the image of $D$ is $(D_0,\tilde{D})$.
\end{proof}

From now on, given an effective divisor $W\in X^{[d]}(\closure{\K})$, we denote by $J(W)$ its image via the morphism $J^d:X^{[d]}\to\A$, i.e. the equivalence class $[W-d\infty]$ in the Jacobian.
\begin{prop}\label{V}
Fix a point $D\in\A(\F_q)$, and let $P,Q\in X(\K)$ such that $\red_{\K}(P)$ is equal to $\red_{\K}(Q)$, with $P\neq Q^{(s)}$ for $|s|<2g$. 

Then, there is a unique effective divisor $W$ such that: $[W-W^{(1)}]=[P-Q]$, the reduction of $J(W)$ is $D$, and $\deg(W)\leq g$.
Moreover, a fortiori, $W\in X^{[g]}(\K)$ and, if $R$ is a point in the support of $W$, $R\not\in X(\overline{\F_q})$.
\end{prop}
\begin{proof}
By Lemma \ref{finite covering} there is an element $D'\in\A(\K)$ such that $D'-D'^{(1)}=[P-Q]$ and $\red_{\K}(D')=D$. Since the morphism $J^g$ is surjective, there is a divisor $W\in X^{[g]}(\closure{\K})$ such that $J(W)=D'$. By Drinfeld's vanishing lemma, there is only one divisor of degree $\leq g$ with the requested properties, hence $h^0(W)=1$; by Corollary \ref{preimage of h^0=1}, $W$ is $\K$-rational.

Now, call $W'\leq W$ the maximal $\overline{\F_q}$-rational effective divisor ($W'\in X^{[d]}(\overline{\F_q})$), and call $G$ the group of $\K$-linear field automorphisms of $\closure{\K}$, which acts naturally on $X(\closure{\K})$. Since $W\in X^{[g]}(\K)$, it is fixed by the induced action of $G$; moreover, this action sends $X(\overline{\F_q})$ to itself, hence $W'\leq W$ is also fixed by $G$: since $W'$ is both $\K$-rational and $\overline{\F_q}$-rational, $W'\in X^{[d]}(\F_q)$. We have:
\[(W-W')-(W-W')^{(1)}=(W-W^{(1)})+(W'-W'^{(1)})=W-W^{(1)}\sim P-Q,\]
but $\deg(W-W')=g$ from Drinfeld's vanishing lemma, hence $d=\deg(W')=0$.
\end{proof}

Recall the notation of $I,\bar{I},\Xi$ from the start of this section.

\begin{lemma}\label{red(Xi)}
We have the identity $\red_\K(\Xi)=\infty$ in $X(\K)$.
\end{lemma}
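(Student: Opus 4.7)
The plan is to combine the valuative criterion of properness with the observation that $A$ is not contained in the valuation ring $\O_\K\subseteq\K$.

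Since $X$ is projective over $\F_q$, the valuative criterion (equivalently, Lemma \ref{limit points} applied to $Y=X_{\O_\K}$) yields the identification $X(\K)=X(\O_\K)$, so $\Xi$ extends uniquely to a morphism $\widetilde\Xi\colon\Spec(\O_\K)\to X$. By Definition \ref{red}, $\red_\K(\Xi)$ is the point of $X(\F_q)\subseteq X(\O_\K)$ obtained by reducing $\widetilde\Xi$ modulo $\m_\K$ to a point $P\in X(\F_q)$ and lifting via the chosen section $\F_q\hookrightarrow\O_\K$; since $\infty$ is already $\F_q$-rational, it suffices to show that $\widetilde\Xi$ sends the closed point of $\Spec(\O_\K)$ to $\infty$.

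Suppose for contradiction that $P$ lies in $X\setminus\{\infty\}=\Spec(A)$. Then the open set $\widetilde\Xi^{-1}(X\setminus\{\infty\})\subseteq\Spec(\O_\K)$ contains the unique closed point and therefore equals the whole local scheme, so $\widetilde\Xi$ factors as $\Spec(\O_\K)\to\Spec(A)\hookrightarrow X$. This factorization corresponds to a ring homomorphism $A\to\O_\K$ whose composition with $\O_\K\hookrightarrow\K$ must recover the tautological inclusion $A\hookrightarrow K\hookrightarrow\K$ defining $\Xi$. This forces $A\subseteq\O_\K$ as subrings of $\K$; but $A$ contains nonconstant elements, all of which have strictly negative valuation at $\infty$, a contradiction.

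There is no real obstacle beyond carefully setting up the valuative criterion: the content of the lemma is the tautology that the canonical valuation on $\K$ extends $v_\infty$, whose center on the curve $X$ is the point $\infty$ by construction.
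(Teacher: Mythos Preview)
Your proof is correct and follows essentially the same approach as the paper: both argue that if $\red_\K(\Xi)$ lay in $X\setminus\{\infty\}=\Spec(A)$ then the canonical inclusion $A\hookrightarrow\K$ would factor through $\O_\K$, contradicting the fact that $A\not\subseteq\O_\K$. The paper's version is simply a one-line compression of the same contradiction you spell out in detail.
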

\begin{proof}
Since the image of the canonical inclusion $A\into\K$ is not contained in $\O_\K$, the morphism $\Xi:\Spec(\K)\to X\setminus\{\infty\}$ does not factor through $\Spec(\O_\K)$, which means that $\red_\K(\Xi)\not\in X(\F_q)\setminus\{\infty\}$, so $\red_\K(\Xi)=\infty$.
\end{proof}

Next, we construct some notable divisors.

\begin{prop}\label{con}
The following effective divisors of $X_\K$ exist and are unique:
\begin{itemize}
    \item a divisor $V_{\bar{I}}$ of degree $\leq g$, such that $V_{\bar{I}}-V_{\bar{I}}^{(1)}\sim\Xi-\infty$ and $\red_\K(J(V_{\bar{I}}))=J(I)$;
    \item for $m\geq1$, a divisor $V_{\bar{I},m}$ of degree $\leq g$, such that $V_{\bar{I},m}-V_{\bar{I},m}^{(1)}\sim\Xi^{(1)}-\Xi^{(m+1)}$ and $\red_\K(J(V_{\bar{I}}))=J(I)$;
    \item a divisor $V_{\bar{I},*}$ of degree $\leq g$ such that $J(V_{\bar{I},*})+J(V_{\bar{I}})=0$;
    \item for $m\gg0$, a divisor $V_{\bar{I},*,m}$ of degree $\leq g$ such that $J(V_{\bar{I},*,m})+J(V_{\bar{I},m})=0$.    
\end{itemize}
Moreover, they all are in $X^{[g]}(\K)$.
\end{prop}
\begin{proof}
Let's first note that the divisors, if they exist, are well defined: since for all $a,b\in A\setminus\{0\}$ $J(aI)=J(bI)$, the properties of the divisors we want to construct only depend on the ideal class $\bar{I}\in Cl(A)$ of $I$.

Since $\red_\K(\Xi)=\infty$ by Lemma \ref{red(Xi)}, we can apply Proposition \ref{V} to $P=\Xi$ and $Q=\infty$ (resp. $P=\Xi^{(1)}$ and $Q=\Xi^{(m+1)}$ for $m\gg0$), so the divisor $V_{\bar{I}}$ (resp. $V_{\bar{I},m}$) exists, is unique, and is contained in $X^{[g]}(\K)$.

Since $J^g(\closure\K):X^{[g]}(\closure\K)\to\A(\closure\K)$ is surjective, there is at least one effective divisor $V_{\bar{I},*}$ of degree at most $g$ such that $J(V_{\bar{I},*})=-J(V_{\bar{I}})$. It has the following properties:
\begin{align*}
    &[V_{\bar{I},*}-V_{\bar{I},*}^{(1)}]=[V_{\bar{I}}^{(1)}-V_{\bar{I}}]=[\infty-\Xi];
    &\red_\K(J(V_{\bar{I},*}))=-\red_\K(J(V_{\bar{I}}))=-J(I).
\end{align*}
By Proposition \ref{V} applied to $P=\infty$ and $Q=\Xi$, $V_{\bar{I},*}$ is unique, $\K$-rational, and of degree $g$.

Similarly, the existence and uniqueness of $V_{\bar{I},*,m}$ for $m\gg0$ are ensured by the following properties: $V_{\bar{I},*,m}-V_{\bar{I},*,m}^{(1)}\sim\Xi^{(m+1)}-\Xi^{(1)}$, and $\red_\K(J(V_{\bar{I},*,m}))=-J(I)$.
\end{proof}
\begin{oss}
    The classical construction of the \emph{Drinfeld divisors} $\{V_{\bar{I}}\}_{\bar{I}}$ (see \cite{Thakur}) is simpler and gives somewhat more information than the one presented in this paper. On the other hand, a topological point of view is more consistent with the rest of the work.
\end{oss}
\begin{oss}\label{Hayes}
Recall that we fixed an inclusion $H\subseteq\K$; the divisors $\{V_{\bar{I}}\}_{\bar{I}}$ are actually $H$-rational, and the natural action of $\G(H/K)$ on this set is free and transitive (see \cite{Hayes}[Prop. 3.2, Thm. 8.5]). Call $\bar{I}^\sigma\in Cl(A)$ the element such that $V_{\bar{I}^\sigma}=V_{\bar{I}}^\sigma$. Since this action commutes with morphisms of schemes, for all $\sigma\in\G(H/K)$, for all $\bar{I}\in Cl(A)$, we have that
\[[V_{\bar{I},*}^\sigma-g\infty]=[V_{\bar{I},*}-g\infty]^\sigma=[g\infty-V_{\bar{I}}]^\sigma=[g\infty-V_{\bar{I}}^\sigma]=[g\infty-V_{\bar{I}^\sigma}]=[V_{\bar{I}^\sigma,*}-g\infty];\]
hence $V_{\bar{I},*}^\sigma=V_{\bar{I}^\sigma,*}$ by Proposition \ref{con} because of uniqueness.
\end{oss}

Finally, we state the main result of this subsection, which is central to the proof of the main theorems.

\begin{prop}\label{V_{I,m}}
The sequences $(V_{\bar{I},m})_m$ and $(V_{\bar{I},*,m})_m$ converge respectively to the divisors $V_{\bar{I}}^{(1)}$ and $V_{\bar{I},*}^{(1)}$ in $X^{[g]}(\K)$.
\end{prop}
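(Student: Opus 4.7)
The strategy is to transfer convergence from $X^{(g)}(\K)$ to the Jacobian $\A(\K)$ via the Abel-Jacobi map $J^g$, and then to split the convergence componentwise using Proposition \ref{finite covering}. The crucial observation is that every divisor appearing in the statement has $h^0=1$: for $V_{\bar{I}},V_{\bar{I},m},V_{\bar{I},*},V_{\bar{I},*,m}$ this is built into the proofs of Corollary \ref{V} and Lemma \ref{con}, and the same argument applies verbatim to $V_{\bar{I}}^{(1)}$ and $V_{\bar{I},*}^{(1)}$ after twisting their defining linear-equivalence relations, since $\Xi^{(1)}$ and $\infty$ still satisfy the hypotheses of Corollary \ref{V}. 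Let $U\subseteq X^{(g)}(\K)$ be the set of effective divisors with $h^0=1$. Because $h^0$ depends only on the linear equivalence class, $J^g|_U$ is injective onto its image $W:=J^g(U)$, and the full preimage of $W$ in $X^{(g)}(\K)$ is exactly $U$; Lemma \ref{local homeomorphism} applied to the $\F_q$-morphism $J^g:X^{(g)}\to\A$ then makes $J^g|_U:U\to W$ a homeomorphism, so it suffices to prove the claimed convergence in $\A(\K)$.

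By Proposition \ref{finite covering} with $L=\K$ (where $k_L=1$ because $\F_\K=\F_q$), the assignment $D\mapsto(D-D^{(1)},\red_\K(D))$ is a homeomorphism $\A(\K)\cong\A_0(\K)\times\A(\F_q)$, so I check both components. Twisting the defining relation of $V_{\bar{I}}$ yields $J(V_{\bar{I}}^{(1)})-J(V_{\bar{I}}^{(1)})^{(1)}=[\Xi^{(1)}-\infty]$, while by construction $J(V_{\bar{I},m})-J(V_{\bar{I},m})^{(1)}=[\Xi^{(1)}-\Xi^{(m+1)}]$. By Lemma \ref{red(Xi)}, $\red_\K(\Xi)=\infty$, hence Proposition \ref{red limit} gives $\Xi^{(m+1)}\to\infty$ in $X(\K)$; applying the continuous Abel-Jacobi morphism $X\to\A$, $P\mapsto[P-\infty]$, and using the continuity of the group law on $\A(\K)$ (Proposition \ref{Frobenius group scheme}), this yields $[\Xi^{(1)}-\Xi^{(m+1)}]\to[\Xi^{(1)}-\infty]$. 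For the second component, both $\red_\K(J(V_{\bar{I},m}))=J(I)$ (by construction) and $\red_\K(J(V_{\bar{I}}^{(1)}))=\red_\K(J(V_{\bar{I}})^{(1)})=\red_\K(J(V_{\bar{I}}))=J(I)$ hold, using that $J^g$ is $\F_q$-equivariant (so it commutes with Frobenius twist) and that Frobenius acts trivially on $\A(\F_q)$; in fact this component is even constant.

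The convergence $V_{\bar{I},*,m}\to V_{\bar{I},*}^{(1)}$ is obtained by exactly the same argument, after exchanging the roles of $\Xi^{(1)}$ and $\Xi^{(m+1)}$ in the first component and negating the class $J(I)$ in the second.

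The main obstacle lies in the first reduction: to deduce convergence in $X^{(g)}(\K)$ from convergence in $\A(\K)$ one must restrict to a locus on which $J^g$ is injective, and the role of Drinfeld's vanishing lemma is precisely to guarantee that all the divisors of interest — including the limits $V_{\bar{I}}^{(1)}$ and $V_{\bar{I},*}^{(1)}$ — belong to the natural such locus $\{h^0=1\}$. Once this is in place, the remaining steps are the essentially formal continuity manipulations described above.
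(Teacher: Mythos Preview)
Your proof is correct and follows essentially the same approach as the paper's own argument: reduce to $\A(\K)$ via the locus $U=\{h^0=1\}$ and Lemma \ref{local homeomorphism}, then use the isomorphism of Proposition \ref{finite covering} and the convergence $\Xi^{(m+1)}\to\infty$ from Proposition \ref{red limit}. The only cosmetic difference is that the paper handles the starred sequence by observing $J(V_{\bar{I},*,m})=-J(V_{\bar{I},m})$ and invoking continuity of negation, rather than rerunning the argument; your more explicit justification that $h^0(V_{\bar{I}}^{(1)})=1$ (via Corollary \ref{V} applied to $P=\Xi^{(1)}$, $Q=\infty$) is a welcome clarification of a point the paper leaves implicit.
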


\begin{proof}
Define $U:=\{D\in X^{[g]}(\K)|h^0(D)=1\}$, so that the restriction $J^g(\K)|_U$ induces a bijection of $U$ with its image in $\A(\K)$; by definition, $U$ is the preimage of its image, hence by Lemma \ref{local homeomorphism} the restriction $J^g(\K)|_U$ is a homeomorphism.
By Proposition \ref{con}, for $m\gg0$, $h^0(V_{\bar{I},m})=h^0(V_{\bar{I}}^{(1)})=1$, so $V_{\bar{I},m},V_{\bar{I}}^{(1)}\in U$, and it suffices to prove the convergence of their images in $\A(\K)$.

If we identify $\A(\K)$ and $\A(\F_q)\times\A_0(\K)$ by Lemma \ref{finite covering}, we have:
\begin{align*}
    \lim_m V_{\bar{I},m}=&\lim_m\left(\red_\K(J^g(V_{\bar{I},m})),[V_{\bar{I},m}-V_{\bar{I},m}^{(1)}]\right)=\lim_m\left(J(I),[\Xi^{(1)}-\Xi^{(m+1)}]\right)\\
    =&\left(J(I),[\Xi^{(1)}-\infty]\right)=\left(\red_\K(J^g(V_{\bar{I}}^{(1)})),[V_{\bar{I}}^{(1)}-V_{\bar{I}}^{(2)}]\right)=V_{\bar{I}}^{(1)},
\end{align*}
where we used that $\lim_m \Xi^{(m)}=\infty$ in $X(\K)$ by Lemma \ref{red(Xi)} and Proposition \ref{red limit}.
Similarly, for the other statement, it suffices to prove that the sequence $(J(V_{\bar{I},m,*}))_m$ converges to $J(V_{\bar{I},*}^{(1)})$, which is obvious because $J(V_{\bar{I},*}^{(1)})=-J(V_{\bar{I}}^{(1)})$ and $J(V_{\bar{I},m,*})=-J(V_{\bar{I},m})$ for all $m\gg0$.
\end{proof}

\section{Pellarin zeta functions}\label{section zeta functions}
Throughout this and all the next sections we fix a uniformizer $u\in\K$ and a nonzero ideal $I< A$. As in the previous section, we also call $I$ the corresponding closed subscheme of $X\setminus\{\infty\}$, and $d$ its degree. Without loss of generality we can assume that for all $a\in A\setminus\{0\}$ the sign $\sgn(a)$ is equal to $\red_u(a(\Xi))$.

The following definition is a generalization of the zeta functions \`a la Pellarin introduced in \cite{Pellarin2011}.

\begin{Def}
    The (partial) \emph{Pellarin zeta function} relative to $I$ is defined as the series:
    \[\zeta_I:=\sum_{a\in I\setminus\{0\}}a^{-1}\otimes a\in\K\hat\otimes A.\]
\end{Def}
In this section, we first define the rational approximations $\{\zeta_{I,m}\}$ of $\zeta_I$ and compute their divisors, in analogy to what was already done by Chung, Ngo Dac and Pellarin in the case $I=A$ (see \cite{Pellarin2021}[Lemma 2.1]). Afterwards, we use Proposition \ref{convergence of functions and divisors} to prove a functional identity regarding $\zeta_I$ in the shape of an infinite product, i.e. Theorem \ref{functional identity weak}.

A stronger version of this result (Theorem \ref{functional identity}, stated in the introduction) is proven at the end of Section \ref{section duality}. 

\subsection{The approximations of \texorpdfstring{$\zeta_I$}{the Pellarin zeta} and their divisors}\label{subsection def zeta}

For $m\in\mathbb{N}$, call $j_m$ the least integer such that $\dim_{\F_q}(I(\leq j_m))=h^0(j_m\infty-I)=m+1$. We call $a_I\in I$ the nonzero element with least degree (i.e. $a_I\in I(j_0)$) and sign $1$.
\begin{oss}\label{j_m inequality}
    Since $\deg(j_m\infty-I)+1-g\leq h^0(j_m\infty-I)\leq\deg(j_m\infty-I)+1$, we get the inequality:
    \[m+d\leq j_m\leq m+g+d.\]
    Moreover, for $m\gg0$, the rightmost inequality becomes an equality.
\end{oss}

\begin{Def}
    We set for all $m\geq0$:
    \[\zeta_{I,m}:=\sum_{a\in I\setminus\{0\}(\leq j_m)}a^{-1}\otimes a\in\K\otimes A.\]
\end{Def}

\begin{oss}
The sequence $\zeta_{I,m}$ converges to $\zeta_I$ in $\K\hat\otimes A\cong A[[u]]$.
\end{oss}

\begin{prop}\label{divisor of zeta_{I,m}}
The divisor of $\zeta_{I,m}$ is $\Xi^{(1)}+\dots+\Xi^{(m)}+I+W_m-j_m\infty$ for some effective divisor $W_m$ with $h^0(W_m)=1$. Moreover, for $m\gg0$, $j_m=m+g+d$ and $W_m=V_{\bar{I},*,m}$.
\end{prop}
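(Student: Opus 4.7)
The plan is to establish the divisor formula via four subclaims: (i) $\zeta_{I,m}$ vanishes along $I$; (ii) $\zeta_{I,m}$ vanishes at $\Xi^{(1)},\dots,\Xi^{(m)}$; (iii) its pole at $\infty$ has order exactly $j_m$ and $h^0(W_m)=1$; (iv) for $m\gg 0$, $W_m=V_{\bar{I},*,m}$. The core of the argument, for (iii), will be a Moore-matrix computation inside the Riemann--Roch space $\L_{X_\K}(j_m\infty-I)$.

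First I would handle (i) and (ii). For (i), each summand $a^{-1}\otimes a$ with $a\in^* I$ already vanishes along $I$ on $X_\K$, since $a\in I$ forces the zero divisor of $a$ on $X$ to contain $I$; hence the sum vanishes along $I$ as well. For (ii), Lemma \ref{Frob} identifies $\Xi^{(i)}$ with the ring homomorphism $A\to\K$, $a\mapsto a^{q^i}$, so
\[\zeta_{I,m}(\Xi^{(i)})=\sum_{a\in^* I(\leq j_m)}a^{q^i-1}.\]
This vanishes by the classical power-sum identity: for any $r$-dimensional $\F_q$-subspace $V$ of a field, $\sum_{v\in V}v^k=0$ for $0\leq k<q^r-1$. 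This follows because $f_V(T)=\prod_{v\in V}(T-v)$ is $\F_q$-linear in $T$, so $f_V'(T)$ is a nonzero constant, and one can then read off $f_V'(T)/f_V(T)=\sum_k p_k/T^{k+1}$ as a Laurent expansion at infinity. Applied to $V=I(\leq j_m)$ of dimension $m+1$ and $k=q^i-1$ with $1\leq i\leq m$, the desired vanishing follows.

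For (iii), I would pick an $\F_q$-basis $e_0,\dots,e_m$ of $I(\leq j_m)$ with $\deg(e_k)=j_k$ (which exists since $\dim I(\leq j_k)=k+1$), and consider the $\K$-linear evaluation map
\[\mu:\K\otimes I(\leq j_m)=\L_{X_\K}(j_m\infty-I)\to\K^m,\quad h\mapsto(h(\Xi^{(i)}))_{i=1}^m,\]
whose matrix in the basis $\{1\otimes e_k\}$ is $(e_k^{q^i})_{1\leq i\leq m,\,0\leq k\leq m}$. The full $(m+1)\times(m+1)$ Moore matrix $(e_k^{q^i})_{0\leq i,k\leq m}$ has nonzero Moore determinant (since the $e_k$ are $\F_q$-linearly independent), so removing the $i=0$ row leaves a matrix of rank $m$; hence $\mu$ is surjective with one-dimensional kernel, which by (i)--(ii) is spanned by $\zeta_{I,m}$. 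The standard isomorphism $\L(W_m)\cong\ker\mu$ given by multiplication by $\zeta_{I,m}$ then yields $h^0(W_m)=1$. To pin down the pole order, I would restrict $\mu$ to the codimension-one subspace $\K\otimes I(\leq j_m-1)=\K\cdot\langle e_0,\dots,e_{m-1}\rangle$: using $e_k^{q^i}=(e_k^q)^{q^{i-1}}$, its $m\times m$ matrix is the Moore matrix of $e_0^q,\dots,e_{m-1}^q$ (still $\F_q$-linearly independent), hence invertible. So $\zeta_{I,m}$ cannot lie in this subspace, its pole order at $\infty$ is exactly $j_m$, and $\deg(W_m)=j_m-m-d$.

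For (iv), Remark \ref{j_m inequality} gives $j_m=m+g+d$ for $m\gg 0$, so $\deg(W_m)=g$. Twisting the divisor formula and subtracting produces $W_m-W_m^{(1)}\sim\Xi^{(m+1)}-\Xi^{(1)}$ with principal divisor $\Div(\zeta_{I,m}^{(1)}/\zeta_{I,m})$. Applying $J$ to $\Div(\zeta_{I,m})=I+\sum_i\Xi^{(i)}+W_m-j_m\infty$ and using principality gives $J(I)+\sum_i J(\Xi^{(i)})+J(W_m)=0$; reducing via $\red_\K$, using $\red_\K(\Xi^{(i)})=\infty$ from Lemma \ref{red(Xi)} (so $\red_\K J(\Xi^{(i)})=0$) and the $\F_q$-rationality of $I$ (so $\red_\K J(I)=J(I)$), we get $\red_\K J(W_m)=-J(I)$. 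These are precisely the defining conditions of $V_{\bar{I},*,m}$ in Lemma \ref{con}, so uniqueness---valid for $m$ large enough that the hypothesis of Corollary \ref{V} holds---forces $W_m=V_{\bar{I},*,m}$. The hardest part will be the Moore-matrix step in (iii), which must simultaneously deliver the dimension $h^0(W_m)=1$ and the exact pole order at $\infty$, and it crucially depends on choosing a degree-compatible basis of $I(\leq j_m)$ so that the filtration by pole order matches the linear algebra.
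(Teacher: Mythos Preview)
Your proof is correct and complete, but your route through (iii) and (iv) differs from the paper's.

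For (iii), the paper does not use Moore determinants. Instead it sets $D_n:=j_m\infty-I-\sum_{i=1}^n\Xi^{(i)}$ and runs a descent argument: using $\L(D_n)\cap\L(D_n^{(1)})=\L(D_{n+1})$ together with $\L(D_{n+1})\subseteq\L(D_n^{(1)})$, it shows that $h^0(D_{n+1})=h^0(D_n)-1$ whenever $h^0(D_n)\geq1$, by a maximality contradiction in the spirit of Drinfeld's vanishing lemma (Lemma~\ref{Drinfeld}). This yields $h^0(W_m)=h^0(D_m)=h^0(D_0)-m=1$ directly, without ever pinning down the exact pole order at $\infty$. Your Moore-matrix computation is more explicit: it simultaneously gives the one-dimensionality of the kernel and the exact order $j_m$ at $\infty$ (hence that $W_m$ has no support at $\infty$), at the cost of choosing a degree-compatible basis and invoking the nonvanishing of two Moore determinants. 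The paper's argument is basis-free and thematically aligned with the rest of Section~\ref{Frobenius and divisors}; yours is shorter and slightly more informative.

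For (iv), the paper computes $\red_u(\zeta_{I,m})$ and invokes Proposition~\ref{Div and red commute} to get $\red_\K(\Div(\zeta_{I,m}))=I+\red_\K(W_m)-(d+g)\infty\sim0$. You instead push the principal divisor identity into $\A(\K)$ via $J$ and apply $\red_\K$ there, using $\red_\K(\Xi^{(i)})=\infty$ from Lemma~\ref{red(Xi)} (valid for all $i$ since the Frobenius twist is trivial on $X(\F_q)$). Both reach $\red_\K(J(W_m))=-J(I)$; yours bypasses the explicit leading coefficient of $\zeta_{I,m}$, while the paper's route has the side benefit of recording $\red_u(\zeta_{I,m})=-a_I^{-1}\otimes a_I$, which it reuses later in the proof of Theorem~\ref{functional identity}.
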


The following result is similar to a well known lemma (see \cite{Goss}[Lemma 8.8.1]). We prove it in this stronger form because of its use in Section \ref{section duality}.

\begin{lemma}\label{S_n,k}
    Call $S_{n,d}(x_1,\dots,x_n)\in\F_q[x_1,\dots,x_n]$ the sum of the $d$-th powers of all the homogeneous linear polynomials. Suppose that the coefficient of monomial $x_1^{d_1}\cdots x_n^{d_n}$ in the expansion of $S_{n,d}(x_1,\dots,x_n)$ is nonzero: then, for all $1\leq j\leq n$, $\sum_{i=1}^j d_i\geq q^j-1$. In particular, if $d<q^n-1$, $S_{n,d}=0$.
\end{lemma}
\begin{proof}
    The coefficient $c_{d_1,\dots,d_n}$ of the monomial $x_1^{d_1}\cdots x_n^{d_n}$ is:\[\frac{d!}{d_1!\cdots d_n!}\sum_{a_1,\dots,a_n\in\F_q}a_1^{d_1}\cdots a_n^{d_n}=\frac{d!}{d_1!\cdots d_n!}\prod_{i=1}^n\left(\sum_{a_i\in\F_q}a_i^{d_i}\right),\]where by convention we set $0^0=1$.  On one hand, if the multinomial coefficient $\frac{d!}{d_1!\cdots d_n!}$ is nonzero in $\F_q$, $C(d)=C(d_1)+\cdots+C(d_n)$, where we denote by $C(m)$ the sum of the digits in base $q$ of the nonnegative integer $m$; in particular, for $1\leq j\leq n$ this implies $C(d_1+\cdots+d_j)=C(d_1)+\cdots+C(d_j)$. On the other hand, $\sum_{a_i\in\F_q}a_i^{d_i}\neq0$ if and only if $d_i>0$ and $q-1|d_i$; in particular, this implies $C(d_i)\geq q-1$ for all $i$.
    
    If $c_{d_1,\dots,d_n}\neq0$, for $1\leq j\leq n$ we have:\[C\left(\sum_{i=1}^j d_i\right)=\sum_{i=1}^j C(d_i)\geq(q-1)j,\]hence $\sum_{i=1}^j d_i\geq q^j-1$. Applying this to $j=n$ we get the condition $d\geq q^n-1$, therefore $S_{n,d}=0$ for all $d<q^n-1$.
\end{proof}

\begin{proof}[Proof of Proposition \ref{divisor of zeta_{I,m}}]
Since $\zeta_{I,m}$ is sum of elements whose divisor contains $I$, it's obvious that $\Div^+(\zeta_{I,m})\geq I$. If we fix an $\F_q$-basis $\{a_i\}_{i=0,\dots,m}$ of $I(\leq j_m)$, for any positive integer $k$ we have:
\[\zeta_{I,m}(\Xi^{(k)})=\sum_{a\in I(\leq j_m)}a^{q^k-1}=S_{m+1,q^k-1}(a_0,\dots,a_m),\]
which by Lemma \ref{S_n,k} is zero when $k\leq m$. Since the only poles are at $\infty$, and have multiplicity at most $j_m$, $\Div(\zeta_{I,m})=\Xi^{(1)}+\dots+\Xi^{(m)}+I+W_m-j_m\infty$ for some effective divisor $W_m$. To study $h^0(W_m)$, call $D_n:=j_m\infty-I-\sum_{i=1}^n\Xi^{(i)}$ for all nonnegative integers $n$. 

Note that, since $(j_m\infty-I)^{(1)}=j_m\infty-I$, we deduce for all $n\geq0$: 
\begin{align*}
    &H^0(X_{\closure\K},D_{n+1})\subseteq H^0(X_{\closure\K},D_n),\\
    &H^0(X_{\closure\K},D_{n+1})\subseteq H^0(X_{\closure\K},D_n^{(1)}),\\
    &H^0(X_{\closure\K},D_n)\cap H^0(X_{\closure\K},D_n^{(1)})=H^0(X_{\closure\K},D_{n+1}).
\end{align*}
Let's prove that, for all $n\geq0$, if $h^0(D_n)\geq1$, then $h^0(D_{n+1})=h^0(D_n)-1$. By contradiction, assume that the set $S:=\{n\in\mathbb{Z}_{\geq0}|h^0(D_{n+1})=h^0(D_n)>0\}$ is not empty. Since for $k\gg0$ $h^0(D_k)=0$, $S$ admits a maximum element $n$; since $D_n^{(1)}>D_{n+1}$ and $D_n^{(1)}>D_{n+1}^{(1)}$, we have $H^0(X_{{\closure\K}},D_{n+1}^{(1)})+H^0(X_{{\closure\K}},D_{n+1})\subseteq H^0(X_{{\closure\K}},D_n^{(1)})$; since $h^0(D_{n+1})=h^0(D_n)=h^0(D_n^{(1)})$ we get the following identities:
\begin{align*}
&H^0(X_{\closure\K},D_n^{(1)})=H^0(X_{\closure\K},D_{n+1})=H^0(X_{\closure\K},D_{n+1}^{(1)})\\
\Rightarrow&H^0(X_{\closure\K},D_{n+1})=H^0(X_{\closure\K},D_{n+1})\cap H^0(X_{\closure\K},D_{n+1}^{(1)})=H^0(X_{\closure\K},D_{n+2})\\
\Rightarrow&h^0(D_{n+2})=h^0(D_{n+1}).
\end{align*}
We deduce $n+1\in S$, which contradicts the maximality hypothesis on $n$, therefore $S=\emptyset$. In particular $m\not\in S$, and since $h^0(W_m)\geq1$ and $W_m\sim D_m$, we have:
\[h^0(W_m)=h^0(D_m)=h^0(D_0)-m=h^0(j_m\infty-I)-m=1.\]

On one hand, $\deg(W_m)=\deg(j_m\infty-\Xi^{(1)}-\dots-\Xi^{(m)}-I)=j_m-m-d$, which is $\leq g$ by Remark \ref{j_m inequality}. On the other hand, by Lemma \ref{Twist and divisors} and Proposition \ref{Div and red commute} we have:
\begin{align*}
    &0\sim\Div(\zeta_{I,m})-\Div(\zeta_{I,m})^{(1)}=\Xi^{(1)}-\Xi^{(m+1)}+W_m-W_m^{(1)},\\
    &0\sim\Div(\red_u(\zeta_{I,m}))\sim\red_\K(\Div(\zeta_{I,m}))=I+\red_\K(W_m)-(j_m-m)\infty;
\end{align*}
so $[W_m-W_m^{(1)}]=[\Xi^{(m+1)}-\Xi^{(1)}]$, and $\red_\K(J(W_m))=-J(I)$. Therefore, for $m\gg0$, $W_m=V_{\bar{I},*,m}$ by Proposition \ref{con}.
\end{proof}

\subsection{The \texorpdfstring{function $\zeta_I$}{Pellarin zeta function} as an infinite product}

\begin{prop}\label{functions}
In $\K(X)\subseteq K((u))$ there are $f'_{\bar{I},*},f'_{\bar{I}}\in 1+uK[[u]]$, with divisors $V_{\bar{I},*}-V_{\bar{I},*}^{(1)}+\Xi-\infty$ and $V_{\bar{I}}^{(1)}-V_{\bar{I}}+\Xi-\infty$, respectively. Moreover, there is a rational function $\delta'_{\bar{I}}\in\K(X)$, with divisor $V_{\bar{I}}+V_{\bar{I},*}-2g\infty$, such that $\frac{{\delta'_{\bar{I}}}^{(1)}}{\delta'_{\bar{I}}}=\frac{f'_{\bar{I}}}{f'_{\bar{I},*}}$.
\end{prop}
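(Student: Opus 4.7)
The plan is to obtain the three functions from the principality of their prescribed divisors, and then rigidify them by explicit normalizations of the leading $u$-adic coefficient.

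By Lemma \ref{con}, each of the three divisors is principal (the first two since $V_{\bar I} - V_{\bar I}^{(1)} \sim \Xi - \infty$ and $V_{\bar I,*} - V_{\bar I,*}^{(1)} \sim \infty - \Xi$, the third since $J(V_{\bar I,*}) + J(V_{\bar I}) = 0$), so rational functions $f'_{\bar I}, f'_{\bar I,*}, \delta'_{\bar I}$ with these divisors exist and are each determined up to a scalar in $\K^\times$; by Remark \ref{Hayes} we may in fact take them in $H(X)$.

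To normalize $f'_{\bar I}$, embed it in $K((u))$ via Lemma \ref{K((u))}.1 as $\sum_{i \geq m} c_i u^i$. By Proposition \ref{Div and red commute}, $\Div(c_m) = \red_\K(V_{\bar I}^{(1)} - V_{\bar I} + \Xi - \infty)$. Since $\red_\K$ takes values in $X^{(g)}(\F_q)$, on which Frobenius is trivial, $\red_\K(V_{\bar I}^{(1)}) = \red_\K(V_{\bar I})$; combined with $\red_\K(\Xi) = \infty$ (Lemma \ref{red(Xi)}), this yields $\Div(c_m) = 0$ and hence $c_m \in \F_q^\times$. Multiplying by $c_m^{-1} u^{-m} \in \K^\times$ (which preserves the divisor) produces $f'_{\bar I} \in 1 + uK[[u]]$; the same argument handles $f'_{\bar I,*}$.

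For $\delta'_{\bar I} \in H \otimes A$, the embedding $H \otimes A \hookrightarrow A[[u]][u^{-1}]$ (Lemma \ref{K((u))}.4) has image with coefficients lying in a finite-dimensional $\F_q$-subspace of $A$; multiplying by $u^{-m}$ (with $m$ the $u$-adic valuation) places $\delta'_{\bar I}$ in $\O_\K \otimes A \subseteq \O_\K \otimes K$. A direct calculation shows $\Div((\delta'_{\bar I})^{(1)}/\delta'_{\bar I}) = \Div(f'_{\bar I}/f'_{\bar I,*})$, so the two sides differ by a constant $\lambda \in \K^\times$; inspecting the leading $u$-coefficients (both equal $1$) and the valuations (both equal $0$) shows $\lambda \in 1 + u\F_q[[u]]$.

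The final and most delicate step is to adjust $\delta'_{\bar I}$ by a scalar $\mu \in 1 + u\F_q[[u]] \subseteq \O_\K^\times$ satisfying $\mu^{(1)}/\mu = \lambda^{-1}$; replacing $\delta'_{\bar I}$ by $\mu \delta'_{\bar I}$ then gives the identity exactly. Writing $\mu = 1 + \sum_{k \geq 1} \mu_k u^k$, we have $\mu^{(1)} = 1 + \sum_k \mu_k u^{qk}$ (Frobenius sends $u \mapsto u^q$ and acts trivially on $\F_q$); comparing the $u^k$-coefficients of $\mu^{(1)} = \mu \lambda^{-1}$ yields a triangular system in which each $\mu_k \in \F_q$ is determined inductively from $\mu_1, \ldots, \mu_{k-1}$, the coefficients of $\lambda^{-1}$, and $\mu_{k/q}$ (when $q \mid k$). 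This recursion solves uniquely, and since $\mu \in \O_\K^\times$, multiplying $\delta'_{\bar I}$ by $\mu$ preserves both its divisor and its membership in $\O_\K \otimes K$. The main obstacle is precisely the solvability of this Frobenius-linear recursion, which is of Artin--Schreier type over $\F_q$.
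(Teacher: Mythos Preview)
Your proof is correct and follows essentially the same approach as the paper's: construct the functions from principality of the divisors (Lemma \ref{con}), normalize $f'_{\bar I}$ and $f'_{\bar I,*}$ to lie in $1+uK[[u]]$ via Proposition \ref{Div and red commute}, observe that $(\delta'_{\bar I})^{(1)}/\delta'_{\bar I}$ and $f'_{\bar I}/f'_{\bar I,*}$ differ by some $\lambda\in 1+u\F_q[[u]]$, and then correct $\delta'_{\bar I}$ by a scalar $\mu\in\O_\K^\times$.

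The only difference is cosmetic. Since $\mu\in\O_\K=\F_q[[u]]$, the twist acts as $\mu^{(1)}=\mu^q$, so your equation $\mu^{(1)}/\mu=\lambda^{-1}$ is literally $\mu^{q-1}=\lambda^{-1}$; the paper simply invokes the existence of a $(q-1)$-th root of $\lambda$ in $1+u\F_q[[u]]$ (immediate since $q-1$ is prime to the residue characteristic), whereas you unpack this into a coefficient recursion. Note that this is a Kummer-type equation rather than Artin--Schreier, so your closing remark is slightly misnamed. The aside about $H$-rationality via Remark \ref{Hayes} is unnecessary here (and is in tension with your subsequent multiplication by $u^{-m}\in\K^\times$, after which the function need no longer lie in $H(X)$), but it does no harm to the argument.
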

\begin{proof}
From the definition of $V_{\bar{I},*}$, the divisor $V_{\bar{I},*}-V_{\bar{I},*}^{(1)}+\Xi-\infty$ is principal, hence it is the divisor of some $f'_{\bar{I},*}\in\K(X)$. Moreover, by Lemma $\ref{Div and red commute}$:
\[\Div(\red_u(f'_{\bar{I},*}))=\red_\K(\Div(f'_{\bar{I},*}))=\red_\K(V_{\bar{I},*})-\red_\K(V_{\bar{I},*})^{(1)}=0,\]
hence $\red_u(f'_{\bar{I},*})\in\F_q$; up to scalar multiplication, we can assume $f'_{\bar{I},*}=1+O(u)$. The existence of $f'_{\bar{I}}$ can be proven in the same way.

Since $V_{\bar{I}}+V_{\bar{I},*}-2g\infty$ is principal, it is the divisor of some rational function $\tilde\delta'_{\bar{I}}$ contained in $H^0(X_\K,2g\infty)\subseteq A[[u]][u^{-1}]$, and up to scalar multiplication we can assume $\tilde\delta'_{\bar{I}}=c_0+O(u)$ for some $c_0\in A$. We get:
\[\Div(\tilde\delta'_{\bar{I}})^{(1)}-\Div(\tilde\delta'_{\bar{I}})=\Div(f'_{\bar{I}})-\Div(f'_{\bar{I},*})\Longrightarrow\frac{\tilde\delta'_{\bar{I}}{}^{(1)}}{\tilde\delta'_{\bar{I}}}=\lambda\frac{f'_{\bar{I}}}{f'_{\bar{I},*}}\]
for some $\lambda\in\K$; moreover, by considering the expansion in $K((u))$, $\lambda=1+O(u)$, hence it admits a $q-1$-th root $\mu\in\O_\K$. If we set $\delta'_{\bar{I}}:=\mu^{-1}\tilde\delta'_{\bar{I}}$ we obtain the desired equation. 
\end{proof}
\begin{oss}
The choices of $f'_{\bar{I}},f'_{\bar{I},*},\delta'_{\bar{I}}$ are not unique.
\end{oss}

\begin{teo}[Weak version of Thm. \ref{functional identity}]\label{functional identity weak}
The product $-(a_I^{-1}\otimes a_I)\prod_{i\geq1}{f'_{\bar{I},*}}^{(i)}$ exists in $\O_\K\hat\otimes K$ and is equal to $\zeta_I$ up to a factor $\lambda\in\O_\K^\times$. We can also write:
\[\zeta_I=-(a_I^{-1}\otimes a_I)\prod_{i\geq0}\left((\lambda\otimes1)^{1-q}f'_{\bar{I},*}{}^{(1)}\right)^{(i)}.\]
\end{teo}
\begin{proof}
Let's identify $\O_\K\hat\otimes K$ with $K[[u]]$. By Proposition \ref{functions}, $f'_{\bar{I},*}=1+O(u)$, hence ${f'_{\bar{I},*}}^{(i)}=1+O(u^{q^i})$ for all $i\geq0$, and the convergence of the infinite product is obvious. For all $m\geq0$:
\[\red_u(\zeta_{I,m})=\red_u\left(\sum_{a\in I\setminus\{0\}}a^{-1}\otimes a\right)=\sum_{\mu\in\F_q^\times}\red_u(\mu a_I)^{-1}\otimes(\mu a_I)=-1\otimes a_I.\]
In particular, by Proposition \ref{Div and red commute}, for $m\gg0$ we have:
\[\Div(1\otimes a_I)=\Div(\red_u(\zeta_{I,m}))=\red_\K(\Div(\zeta_{I,m}))=I+\red_\K(V_{\bar{I},*,m})-(g+d)\infty;\]
since $\red_\K:X^{[g]}(\K)\to X^{[g]}(\K)$ is a continuous map, and the sequence $(V_{\bar{I},*,m})_m$ converges to $V_{\bar{I},*}$ in $X^{[g]}(\K)$ by Lemma \ref{V_{I,m}}, the equality passes to the limit:
\[\red_\K(V_{\bar{I},*})=\Div(1\otimes a_I)+(g+d)\infty-I.\]
Define the rational function $\alpha_m:={\delta'_{\bar{I}}}^{(1)}\frac{\zeta_{I,m}}{{f'_{\bar{I},*}}^{(1)}\cdots {f'_{\bar{I},*}}^{(m)}}$ for $m\gg0$ and look at its divisor:
\[\Div(\alpha_m)=I+V_{\bar{I},*}^{(m+1)}+V_{\bar{I},*,m}+V_{\bar{I}}^{(1)}-(3g+d)\infty\Longrightarrow \alpha_m\in H^0(X_\K,(3g+d)\infty).\]
By Lemma \ref{V_{I,m}}, the sequence $(\Div(\alpha_m)+(3g+d)\infty)_m$ converges to
\[I+\red_\K(V_{\bar{I},*})+V_{\bar{I},*}^{(1)}+V_{\bar{I}}^{(1)}=(\Div(1\otimes a_I)+(g+d)\infty)+(\Div({\delta'_{\bar{I}}}^{(1)})+2g\infty)\]
in $X^{[3g+d]}(\K)$. Moreover, since the sequence $(\alpha_m)_m$ converges to ${\delta'_{\bar{I}}}^{(1)}\zeta_I\left(\prod_{i\geq1}{f'_{\bar{I},*}}^{(i)}\right)^{-1}$ in $K((u))$, by Lemma \ref{lifting convergence} the latter is an element of $H^0(X_\K,(3g+d)\infty)$. By Proposition \ref{convergence of functions and divisors}, we have:
\[\Div\left({\delta'_{\bar{I}}}^{(1)}\frac{\zeta_I}{\prod_{i\geq1}{f'_{\bar{I},*}}^{(i)}}\right)=\Div(\lim_m\alpha_m)=\lim_m\Div(\alpha_m)=\Div(1\otimes a_I)+\Div(\delta_{\bar{I}}'^{(1)}).\]
In particular, there is some $\lambda\in \K$ (a fortiori in $\O_\K$) such that:
\[\zeta_I=-(\lambda\otimes1)(a_I^{-1}\otimes a_I)\prod_{i\geq1}{f'_{\bar{I},*}}^{(i)}.\]

As elements of $K((u))$, $-\zeta_I(a_I\otimes a_I^{-1})=1+O(u)$, and ${f'_{\bar{I},*}}^{(i)}=1+O(u)$ for all $i\geq0$, hence $\lambda\otimes1=1+u\F_q[[u]]\subseteq\F_q((u))$. In particular, the infinite product $\prod_{i\geq0}(\lambda^{1-q}\otimes1)^{q^i}$ converges in $\F_q[[u]]$ to $\lambda\otimes1$, so we deduce the following rearrangement:
\[\zeta_I=-(a_I^{-1}\otimes a_I)\prod_{i\geq0}\left((\lambda^{1-q}\otimes1)f'_{\bar{I},*}{}^{(1)}\right)^{(i)}. \tag*{\qedhere}\]
\end{proof}

\begin{Def}\label{def f, f_*, delta}
    Define the functions $f_{\bar{I}},f_{\bar{I},*},\delta_{\bar{I}}$ respectively as the unique scalar multiples of the functions $f'_{\bar{I}}.f'_{\bar{I},*},\delta'_{\bar{I}}$ such that $\sgn(f_{\bar{I}})=\sgn(f_{\bar{I},*})=\sgn(\delta_{\bar{I}})=1$.
    
    We call $\{f_{\bar{I}}\}_{\bar{I}\in Cl(A)}$ the \emph{shtuka functions} and $\{f_{\bar{I},*}\}_{\bar{I}\in Cl(A)}$ the \emph{adjoint shtuka functions}.
\end{Def}
\begin{oss}\label{delta}
    We have the equality $\frac{\delta_{\bar{I}}^{(1)}}{\delta_{\bar{I}}}=\frac{f_{\bar{I}}}{f_{\bar{I},*}}$, since both sides have the same divisor and the same sign.
\end{oss}
\begin{ex}
    Let's provide some insight on the adjoint shtuka functions in the low genus cases.
    
    If $g=0$, i.e. $A=\F_q[\theta]$, there is no dependence on the ideal class $\bar{I}$, and the divisors $V$ and $V_*$ have degree $0$: in particular, $\Div(f_*)=\Xi-\infty=\Div(f)$, hence $f_*=f=1\otimes\theta-\theta\otimes1$, which is in $\K\otimes A$.

    If $g=1$, i.e. $X$ is an elliptic curve, $V_{\bar{I}}$ and $V_{\bar{I},*}$ have degree $1$, and since the divisor of $\delta_{\bar{I}})$ is $V_{\bar{I}}+V_{\bar{I},*}-2\infty\sim0$, $V_{\bar{I},*}$ is the inverse of $V_{\bar{I}}$ with respect to the group operation on $X(\K)$. Suppose that we can fix an isomorphism $A\cong\F_q[x,y]/(y^2-P(x))$ with $\deg(P)=3$. If $V_{\bar{I}}$ is the map $(x,y)\mapsto (a,b)$ for some $a,b\in\K$, then $V_{\bar{I},*}$ is the map $(x,y)\mapsto (a,-b)$, and assuming $\sgn(x)=1$ we can write $\delta_{\bar{I}}=1\otimes x-a\otimes1\in\K\otimes A$, hence:
    \[f_{\bar{I},*}=f_{\bar{I}}\frac{1\otimes x-a\otimes1}{1\otimes x-a^q\otimes1}.\]
\end{ex}

\begin{oss}\label{Hayes2}
    The functions $\{f_{\bar{I}},f_{\bar{I},*},\delta_{\bar{I}}\}_{\bar{I}\in Cl(A)}$ all have sign equal to $1$, and their divisors are all $H$-rational by Remark \ref{Hayes}, so all these functions are in $\Q(H\otimes A)$. 
    From Remark \ref{Hayes} we also know that, for all $\bar{I}\in Cl(A),\sigma\in G(H/K)\cong Cl(A)$:
    \[\Div(f_{\bar{I}}^\sigma)=\Div(f_{\bar{I}})^\sigma=\left(V_{\bar{I}}^{(1)}\right)^\sigma-V_{\bar{I}}^\sigma+\Xi-\infty=V_{\bar{I}^\sigma}^{(1)}-V_{\bar{I}^\sigma}+\Xi-\infty=\Div(f_{\bar{I}^\sigma}),\]
    and since both functions have sign equal to $1$ we get $f_{\bar{I}}^\sigma=f_{\bar{I}^\sigma}$. Similarly, $f_{\bar{I},*}^\sigma=f_{\bar{I}^\sigma,*}$ and $\delta_{\bar{I}}^\sigma=\delta_{\bar{I}^\sigma}$.
\end{oss}
\begin{cor}\label{gamma_I}
There is $\gamma_I\in \C$, unique up to a factor in $\F_q^\times$, such that:
\[\frac{((\gamma_I\otimes1)\zeta_I)^{(-1)}}{(\gamma_I\otimes1)\zeta_I}=f_{\bar{I},*}.\]
\end{cor}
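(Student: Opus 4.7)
My plan is to directly exploit the infinite product expression for $\zeta_I$ from Theorem \ref{functional identity} to compute the ratio $\zeta_I^{(-1)}/\zeta_I$ and then solve for $\gamma_I$. Since $\K$ is not perfect, the Frobenius twist $^{(-1)}$ must be interpreted in $\C\hat\otimes A$ via the inclusion $\K\into\C$; so it is natural to seek $\gamma_I$ in $\C$.

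First, I start from the identity
\[\zeta_I=(\lambda\otimes 1)(a_I^{-1}\otimes a_I)\prod_{i\geq 1}{f'_{\bar{I},*}}^{(i)}\]
appearing inside the proof of Theorem \ref{functional identity}. Applying $^{(-1)}$ shifts every factor in the product down by one, pulling an extra $f'_{\bar{I},*}$ out of the tail. Dividing by $\zeta_I$ causes the remaining product to telescope and yields a closed-form identity of the shape
\[\frac{\zeta_I^{(-1)}}{\zeta_I}=\left(\left(\frac{a_I}{\lambda}\right)^{(q-1)/q}\otimes 1\right)f'_{\bar{I},*}.\]

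Second, I multiply $\zeta_I$ by a scalar $\gamma_I\otimes 1$ and repeat the calculation; the same telescoping gives an analogous expression in which $(a_I/\lambda)^{(q-1)/q}$ is replaced by $(a_I/(\gamma_I\lambda))^{(q-1)/q}$. Letting $\nu\in\C^\times$ be the unique scalar with $f_{\bar{I},*}=\nu f'_{\bar{I},*}$ (available from Definition \ref{def f, f_*, delta}), the condition
\[\frac{((\gamma_I\otimes 1)\zeta_I)^{(-1)}}{(\gamma_I\otimes 1)\zeta_I}=f_{\bar{I},*}\]
reduces to a single equation of the form $\gamma_I^{q-1}=c$ for an explicit $c\in\C^\times$.

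Finally, since $\C$ is algebraically closed, such a $\gamma_I$ exists in $\C^\times$, and two solutions differ by a $(q-1)$-th root of unity in $\C$, which are exactly the elements of $\F_q^\times$; this gives existence and uniqueness up to $\F_q^\times$. The step that requires the most care is the algebraic manipulation of the Frobenius twist inside the infinite product: one has to track the powers of $\lambda$ and $a_I$ correctly across the $^{(-1)}$ shift in the two tensor components, and keep in mind that the ambient ring becomes $\C\hat\otimes A$ rather than $\K\hat\otimes A$. Everything else is formal once that computation is in hand.
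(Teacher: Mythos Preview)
Your argument is correct and is exactly the computation the paper leaves implicit: the corollary is stated without proof immediately after Theorem~\ref{functional identity} and Definition~\ref{def f, f_*, delta}, and the intended derivation is precisely to apply $^{(-1)}$ to the infinite product, divide, and solve $\gamma_I^{q-1}=c$ in $\C$. Your care about passing to $\C\hat\otimes A$ so that $^{(-1)}$ is available is the only point that needs a word of justification, and you handle it appropriately.
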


\section{The module of special functions}\label{section special functions}

Fix an  ideal $I< A$. By the \emph{shtuka correspondence} (see \cite{Goss}[Section 6.2]), we can associate a normalized Drinfeld module $\phi$ of rank $1$ to the shtuka function $f_{\bar{I}}$.

Let's quickly review the notion of special function, introduced in \cite{ANDTR}.

\begin{Def}[Special functions]
Set $\T:=\C\hat\otimes A$. The set of special functions relative to the Drinfeld module $\phi$ is defined as $\Sf(\phi):=\{\omega\in\T|\omega^{(1)}=f_{\bar{I}}\omega\}$.
\end{Def}
\begin{oss}
    The following fundamental property holds for all $\omega\in\T$ (see \cite{ANDTR}[Lemma 3.6] and \cite{ANDTR}[Rmk. 3.10]):
    \[\omega\in\Sf(\phi)\Longleftrightarrow\forall a\in A\;\phi_a(\omega)=(a\otimes1)\omega.\]
\end{oss}

In this section, we use Theorem \ref{functional identity weak} (in its partial version) to describe somewhat explicitly the module of special functions relative to $\phi$.

Set $\zeta:=(\gamma_I\otimes1)\zeta_I$, with $\gamma_I$ defined as in Corollary \ref{gamma_I}, so that $\zeta^{(-1)}=f_{\bar{I},*}\zeta$.
\begin{teo}[Weak version of Theorem \ref{Sf module} ]\label{Sf module weak}
The $A$-module $\Sf(\phi)$ coincides with $(\F_q\otimes I)\frac{\delta_{\bar{I}}}{\zeta^{(-1)}}$.
\end{teo}

\begin{oss}
    An interesting consequence of this result is that $\Sf(\phi)$ and $I$ are isomorphic as $A$-modules. Except for the Carlitz modules, and the Drinfeld modules studied by Green and Papanikolas in \cite{Green}, the description of the isomorphism class of the module of special functions was an open problem until Gazda and Maurischat solved it in the broad context of Anderson modules with the following Theorem (see \cite{Gazda}[Thm. 3.11]).
\end{oss}
\begin{teo*}[Gazda--Maurischat]
    Call $\Lambda$ the period lattice of $\phi$ and $\Omega$ the module of K\"ahler differentials of $A$. The $A$-module of special functions $\Sf(\phi)$ is isomorphic to $\Lambda\otimes_A\Omega^{-1}$.
\end{teo*}
\begin{cor}\label{cor Gazda}
   The period lattice $\Lambda$ of $\phi$ is isomorphic as an $A$-module to $I\otimes_A\Omega$.
\end{cor}

\begin{oss}\label{oss f_I}
    A fortiori, we can define $f_{\bar{I}}$ as the shtuka function of the unique normalized Drinfeld module of rank $1$ whose lattice is isomorphic to $I\otimes_A\Omega$.
\end{oss}

Before the proof of Theorem \ref{Sf module weak}, let's state some preliminary results.

\begin{oss}
By Lemma \ref{K((u))}, we know that $\K\hat\otimes A\cong A[[u]][u^{-1}]$. A rational function on $X_\K$ is in $\T$ if and only if it's contained in $A[[u]][u^{-1}]$, which by Proposition \ref{A[[u]][u^{-1}]} happens if and only if its poles all reduce to $\infty$.
\end{oss}

\begin{lemma}\label{fixed points of the twist}
    The subset of $\C\hat\otimes K$ fixed by the Frobenius twist is $\F_q\otimes K$.
\end{lemma}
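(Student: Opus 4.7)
The plan is standard: realize $\C\hat\otimes K$ as a completion with respect to an orthogonal $\F_q$-basis of $K$, and then deduce the fixed locus from the action of the Frobenius twist on the coefficients. The easy inclusion $\F_q\otimes K\subseteq(\C\hat\otimes K)^{(1)=\mathrm{id}}$ is immediate since, for $c\in\F_q$ and $a\in K$, one has $(c\otimes a)^{(1)}=c^q\otimes a=c\otimes a$. For the reverse inclusion, the starting point is to fix an $\F_q$-linear basis $(b_i)_{i\in I}$ of $K$: because the tensor product is taken over $\F_q$, we have the orthogonal decomposition $\C\otimes K=\bigoplus_{i\in I}\C\cdot b_i$ in which the sup norm of a finite expression $\sum_i c_ib_i$ equals $\max_i|c_i|$.

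Passing to the completion, every $x\in\C\hat\otimes K$ therefore admits a unique convergent expansion $x=\sum_{i\in I}c_ib_i$ with $c_i\in\C$ such that $\{i:|c_i|>\epsilon\}$ is finite for every $\epsilon>0$. The Frobenius twist, being additive and satisfying $\|y^{(1)}\|=\|y\|^q$ on $\C\otimes K$, extends uniquely and continuously to $\C\hat\otimes K$; on the expansions above it acts coefficient-wise, $\left(\sum_i c_ib_i\right)^{(1)}=\sum_i c_i^qb_i$.

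From here the conclusion is short: if $x=\sum_i c_ib_i$ is fixed by the twist, the uniqueness of the expansion forces $c_i^q=c_i$, i.e.\ $c_i\in\F_q$, for every $i$. Since any nonzero element of $\F_q$ has absolute value $1$, the convergence condition then forces $c_i=0$ for all but finitely many $i$, so $x$ is a finite $\F_q$-linear combination of the $b_i$'s and lies in $\F_q\otimes K$. The only genuinely non-formal step is the orthogonal-basis description of the completion and the resulting coefficient-wise action of the twist; once that is in place, everything else is immediate.
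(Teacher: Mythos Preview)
Your proof is correct and follows essentially the same approach as the paper: fix an $\F_q$-basis $(b_i)_i$ of $K$, expand elements of $\C\hat\otimes K$ uniquely as (possibly infinite) sums $\sum_i c_i\otimes b_i$, observe that the twist acts coefficient-wise, and conclude that fixed elements must have $c_i\in\F_q$ for all $i$. Your version is slightly more explicit than the paper's in spelling out why convergence then forces all but finitely many $c_i$ to vanish, but the argument is the same.
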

\begin{proof}
    Fix an $\F_q$-basis $\{b_i\}_i$ of $K$: any element $c\in\C\hat\otimes K$ can be written in a unique way as a possibly infinite sum $\sum_i a_i\otimes b_i$, with $a_i\in\C$ for all $i$. If $c=c^{(1)}$, we need to have for all $i$ the equality $a_i^q=a_i$, hence $a_i\in\F_q$ for all $i$.
\end{proof}

\begin{proof}[Proof of Theorem \ref{Sf module weak}]
First, let's show that $(\F_q\otimes K)\Sf(\phi)=(\F_q\otimes K)\frac{\delta_{\bar{I}}}{\zeta^{(-1)}}$. Pick any $\omega\in \Sf(\phi)$; since $\omega^{(1)}=f_{\bar{I}}\omega$, $\delta_{\bar{I}}^{(1)}=\frac{f_{\bar{I}}}{f_{\bar{I},*}}\delta_{\bar{I}}$, and $\zeta=\frac{1}{f_{\bar{I},*}}\zeta^{(-1)}$, we have:
\[\left(\frac{\omega\zeta^{(-1)}}{\delta_{\bar{I}}}\right)^{(1)}=\frac{\omega^{(1)}\zeta}{\delta_{\bar{I}}^{(1)}}=\frac{(f_{\bar{I}}\omega)(f_{\bar{I},*}^{-1}\zeta^{(-1)})}{f_{\bar{I}}f_{\bar{I},*}^{-1}\delta_{\bar{I}}}=\frac{\omega\zeta^{(-1)}}{\delta_{\bar{I}}},\]
hence $\frac{\omega\zeta^{(-1)}}{\delta_{\bar{I}}}\in \F_q\otimes K$ by Lemma \ref{fixed points of the twist}, or equivalently $(\F_q\otimes K)\omega=(\F_q\otimes K)\frac{\delta_{\bar{I}}}{\zeta^{(-1)}}$.

We can twist everything and multiply by $\gamma_I\otimes1$ without loss of generality: the thesis is now that $(1\otimes\lambda)\frac{\delta_{\bar{I}}^{(1)}}{\zeta_I}\in A[[u]][u^{-1}]$ if and only if $\lambda\in I$. By Proposition \ref{con}, for all integers $m\gg0$ there is a function $\delta_{\bar{I},m}\in\K(X)$ with divisor $V_{\bar{I},m}+V_{\bar{I},*,m}-2g\infty$. By Proposition \ref{V_{I,m}} the sequence $(V_{\bar{I},m}+V_{\bar{I},*,m})_m$ converges to $V_{\bar{I}}^{(1)}+V_{\bar{I},*}^{(1)}\in X^{[2g]}(\K)$, hence by Proposition \ref{convergence of functions and divisors} we can choose each $\delta_{\bar{I},m}$ so that the sequence $(\delta_{\bar{I},m})_m$ converges to $\delta_{\bar{I}}^{(1)}$ in $K((u))$.

Suppose $\lambda\in I$, and consider the sequence $\left((1\otimes\lambda)\frac{\delta_{\bar{I},m}}{\zeta_{I,m}}\right)_m$ in $K((u))$, whose limit is $(1\otimes\lambda)\frac{\delta_{\bar{I}}^{(1)}}{\zeta_I}$. The divisor of the $m$-th element of the sequence (for $m\gg0$) is
\[V_{\bar{I},m}-(\Xi^{(1)}+\dots+\Xi^{(m)})-I+(m+d-g)\infty+\Div(1\otimes\lambda);\] since $\lambda\in I$, the only poles of the function reduce to $\infty$, hence $(1\otimes\lambda)\frac{\delta_{\bar{I},m}}{\zeta_{I,m}}\in A[[u]][u^{-1}]$ by Proposition \ref{A[[u]][u^{-1}]}, and so does the limit.

Vice versa, suppose $(1\otimes\lambda)\frac{\delta_{\bar{I}}^{(1)}}{\zeta_I}\in A[[u]][u^{-1}]$. Since the coefficients of $(1\otimes\lambda^{-1})\zeta_I$, as a series in $K((u))$, are all contained in $\lambda^{-1}I$, $\delta_{\bar{I}}^{(1)}=\left((1\otimes\lambda)\frac{\delta_{\bar{I}}^{(1)}}{\zeta_I}\right)\left((1\otimes\lambda^{-1})\zeta_I\right)$ has all coefficients in $\lambda^{-1}I$, so the same is true for $\delta_{\bar{I}}$. If by contradiction $\lambda\not\in I$, there is a nonzero prime ideal $P< A$ which divides the fractional ideal $\lambda^{-1}I$, hence all the coefficients of $\delta_{\bar{I}}$ are in $A\cap\lambda^{-1}I\subseteq P$, which by Corollary \ref{cor conv eval Tate} means that $P$ is a zero of $\delta_{\bar{I}}$. Since $P$, as a closed point of $X$, is $\overline{\F_q}$-rational and $\Div(\delta_{\bar{I}})=V_{\bar{I}}+V_{\bar{I},*}-2g\infty$, this is a contradiction because, by Proposition \ref{V}, neither $V_{\bar{I}}$ nor $V_{\bar{I},*}$ have $\overline{\F_q}$-rational points in their support.
\end{proof}

To end this section, let's include an analogous result to Theorem \ref{functional identity weak} for special functions.
\begin{teo}\label{omega infinite product}
There is some $\alpha\in K_\infty^\times$ such that the following element of $\C\hat\otimes K$ is well defined (up to the choice of a $q-1$-th root of $\alpha$):
\[\omega:=(\alpha\otimes1)^\frac{1}{q-1}\prod_{i\geq0}\left(\frac{\alpha\otimes1}{f_{\bar{I}}}\right)^{(i)}.\] 
Moreover, $\omega\in(\F_q\otimes K)\Sf(\phi)$.
\end{teo}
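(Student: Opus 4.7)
The plan is to start from the representative $f'_{\bar{I}} \in 1+uK[[u]]$ of the divisor class of $f_{\bar{I}}$ provided by Proposition \ref{functions}: writing $f_{\bar{I}} = \mu f'_{\bar{I}}$ for the unique scalar $\mu \in \K^\times$ yielded by Definition \ref{def f, f_*, delta}, I would take $\alpha := \mu$. With this choice, $(\alpha\otimes 1)/f_{\bar{I}} = (f'_{\bar{I}})^{-1} \in 1+uK[[u]]$, so $((\alpha\otimes 1)/f_{\bar{I}})^{(i)} \in 1 + u^{q^i}K[[u]]$ and the infinite product converges $u$-adically in $K((u))$ --- into which $\K\hat\otimes K$ embeds by Lemma \ref{K((u))} --- to a unit $P \in 1+uK[[u]]$. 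The factor $\beta := (\alpha\otimes 1)^{1/(q-1)}$ is defined in a finite separable extension of $\K$: the portion of $\alpha$ in $1 + u\O_\K$ admits a canonical root there since $q-1$ is invertible modulo the characteristic, while the leading term contributes a factor lying a priori in a ramified extension carrying a suitable root of unity.

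A direct telescoping gives $P^{(1)}/P = f_{\bar{I}}/(\alpha\otimes 1)$, and by construction $\beta^{(1)}/\beta = \alpha\otimes 1$, whence $\omega := \beta P$ satisfies $\omega^{(1)} = f_{\bar{I}}\omega$. To place $\omega$ inside $\K\hat\otimes K$ and inside $(\F_q\otimes K)\Sf_{\bar{I}}$, I would compare with the non-zero element $\omega_0 := \delta_{\bar{I}}/\zeta^{(-1)}$: by the partial version of Theorem \ref{Sf_I}, $\omega_0$ satisfies the same functional equation $\omega_0^{(1)}=f_{\bar{I}}\omega_0$ and is a generator of $(\F_q\otimes K)\Sf_{\bar{I}}$ in $\K\hat\otimes K$. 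Hence $\omega/\omega_0$ is fixed by the Frobenius twist, so Lemma \ref{fixed points of the twist} forces it into $\F_q\otimes K$. This simultaneously places $\omega$ in $(\F_q\otimes K)\omega_0 \subseteq (\F_q\otimes K)\Sf_{\bar{I}}$ and dissolves the apparent dependence of $\omega$ on a field extension.

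For independence from the choice of $\alpha$, any other admissible choice $\alpha'$ must share the leading $u$-term of $\alpha$ (otherwise the constant terms of the twisted factors fail to stabilize at $1$), so $\alpha' = \alpha(1+\epsilon)$ for some $\epsilon \in u\F_q[[u]]$. Setting $R := \prod_{i\geq 0}(1+\epsilon)^{(i)} \in 1+u\F_q[[u]]$, the ratio $\omega'/\omega$ equals $(1+\epsilon)^{1/(q-1)} R$. Two observations collapse this to $1$: first, $R/R^{(1)} = 1+\epsilon$ by telescoping; second, $R^q = R^{(1)}$, since the coefficients of $R$ lie in $\F_q$ and so raising to the $q$-th power coincides with the Frobenius substitution $u\mapsto u^q$. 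Combining, $\omega'/\omega = (R/R^{(1)})^{1/(q-1)} R = R^{q/(q-1)}(R^{(1)})^{-1/(q-1)} = (R^q/R^{(1)})^{1/(q-1)} = 1$. The main obstacle I foresee is the coherent handling of $(q-1)$-th roots across the three steps; the theorem asserts $\omega \in \K\hat\otimes K$ even though the intermediate $\beta$ need not be, and this is precisely what the comparison with $\omega_0$ --- rather than any direct computation --- achieves.
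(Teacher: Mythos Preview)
Your proposal is correct and follows essentially the same route as the paper. Both arguments (i) produce an admissible $\alpha$ from Proposition~\ref{functions} so that $f_{\bar I}=(\alpha\otimes1)f'_{\bar I}$ with $f'_{\bar I}\in 1+uK[[u]]$, making the product converge; (ii) verify $\omega^{(1)}=f_{\bar I}\omega$ by telescoping; (iii) invoke the ratio $\omega\zeta^{(-1)}/\delta_{\bar I}$ (your $\omega/\omega_0$) together with Lemma~\ref{fixed points of the twist} to land in $(\F_q\otimes K)\Sf_{\bar I}$---the paper phrases this as ``the same considerations expressed in the proof of Theorem~\ref{Sf_I}''; and (iv) reduce independence to the identity $(1+\epsilon)^{1/(q-1)}\prod_{i\ge0}(1+\epsilon)^{q^i}=1$ for $\epsilon\in u\F_q[[u]]$, which you derive via $R^q=R^{(1)}$ and the paper derives directly. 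Your treatment of the $(q{-}1)$-th root is more explicit than the paper's, but the substance is the same.
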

\begin{proof}
    Fix an isomorphism $\K\cong\F_q((u))$. By Proposition \ref{functions}, we can choose some nonzero $\alpha\in\F_q((u))$ such that $\alpha^{-1}f_{\bar{I}}=1+O(u)$, hence the product $\prod_{i\geq0}\left(\frac{\alpha\otimes1}{f_{\bar{I}}}\right)^{(i)}$ converges in $\K\hat\otimes K\cong K((u))$, and $\omega$ is well defined up to the choice of $\alpha^\frac{1}{q-1}$. We have:
    \[\frac{\omega^{(1)}}{\omega}=\left((\alpha\otimes1)^\frac{1}{q-1}\right)^q\prod_{i\geq0}\left(\frac{\alpha\otimes1}{f_{\bar{I}}}\right)^{(i+1)}\left((\alpha\otimes1)^\frac{1}{q-1}\prod_{i\geq0}\left(\frac{\alpha\otimes1}{f_{\bar{I}}}\right)^{(i)}\right)^{-1}=f_{\bar{I}},\]
    so $\omega\in(\F_q\otimes K)\Sf(\phi)$ by the same considerations expressed in the proof of Theorem \ref{Sf module weak}.
\end{proof}

\begin{oss}
    It's not difficult to observe that if $\beta,\gamma\in K_\infty^\times$ are such that the infinite products $\omega(\beta):=(\beta\otimes1)^\frac{1}{q-1}\prod_{i\geq0}\left(\frac{\beta\otimes1}{f_{\bar{I}}}\right)^{(i)}$ and $\omega(\gamma):=(\gamma\otimes1)^\frac{1}{q-1}\prod_{i\geq0}\left(\frac{\gamma\otimes1}{f_{\bar{I}}}\right)^{(i)}$ are well defined, $\omega(\beta)$ is equal to $\omega(\gamma)$ up to a factor in $\F_q$.
\end{oss}

\section{Relation between Pellarin zeta functions and period lattices}\label{section duality}

The aim of this section is to compute more explicitly the constant $\gamma_I$ defined in Corollary \ref{gamma_I}. To do so, we first study more in depth the zeta function $\zeta_I$ and its coefficients as a series in $K[[u]]$; afterwards, we draw a correspondence between the adjoint shtuka function $f_{\bar{I},*}$ and a certain normalized Drinfeld module $\phi$ of rank $1$, obtaining the following result.
\begin{prop*}[Prop. \ref{Lambda=I}]
    The period lattice of $\phi$ is $\gamma_I^{-1}I\subseteq\C$.
\end{prop*}

Finally, we state Theorem \ref{Sf module} (a stronger version of Theorem \ref{Sf module weak} which properly generalizes \cite{Green}[Thm. 7.1]) and Theorem \ref{functional identity} (a stronger version of Theorem \ref{functional identity weak}).

\subsection{Evaluations of the Pellarin zeta functions}

The aim of this subsection, expressed in the following proposition, is to show that there is a well behaved notion of evaluation for the Pellarin zeta function $\zeta_I$ at any point $P\in X(\C)\setminus\{\infty\}$.

From now on, for any series $s\in K[[u^\frac{1}{q^n}]][u^{-1}]$ for some $n$, we denote by $s_{(i)}$ the coefficient of $u^i$, and by $v(s)$ the least element in $\frac{1}{q^n}\mathbb{Z}$ such that $s_{(v(s))}\neq0$. By $v$ we also denote the valuation on $\C$ with the property $v(u)=1$. Finally, recall that $d$ is defined as the degree of the ideal $I$.

\begin{prop}\label{conv eval zeta}
For any point $P\in X(\C)$ different from $\infty$, which corresponds to a map $\chi_P:A\to\C$, the sequence $(\zeta_{I,m}(P))_m$ and the series $\sum_{i\geq0}\chi_P\left((\zeta_I)_{(i)}\right)u^i$ converge to the same element of $\C$.
\end{prop}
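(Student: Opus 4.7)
The plan is to establish, for each $m$, the identity
\[\zeta_{I,m}(P) = \sum_{i \geq 0} \chi_P\bigl((\zeta_{I,m})_{(i)}\bigr) u^i\]
as an equality of elements of $\C$, and then to pass to the limit $m \to \infty$.

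For the first step, note that $\zeta_{I,m}$ is a finite sum over nonzero $a \in I(\leq j_m)$ of terms $a^{-1} \otimes a$, so its evaluation at $P$ gives the finite sum $\sum_{a} a^{-1} \chi_P(a)$ in $\C$. Each $a^{-1} \in \K \subseteq \C$ has the convergent $u$-expansion $a^{-1} = \sum_{i \geq \deg(a)} (a^{-1})_{(i)} u^i$ (with coefficients in $\F_q$ and $|u|_\C < 1$), so interchanging the finite $a$-sum with the $i$-sum---justified by absolute convergence for each individual $a$---yields the claimed identity, since the coefficient of $u^i$ collects precisely into $\chi_P\bigl((\zeta_{I,m})_{(i)}\bigr) \in \C$.

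Next, the degree bound $\deg((\zeta_{I,m})_{(i)}) \leq \min(i, j_m)$ visible from the analysis in Subsection \ref{subsection def zeta} implies $(\zeta_{I,m})_{(i)} = (\zeta_I)_{(i)}$ whenever $i \leq j_m$. Decomposing
\[\zeta_{I,m}(P) = \sum_{i = 0}^{j_m} \chi_P\bigl((\zeta_I)_{(i)}\bigr) u^i + \sum_{i > j_m} \chi_P\bigl((\zeta_{I,m})_{(i)}\bigr) u^i,\]
the first summand is a partial sum of the target series, and equality of the two limits will follow from showing that the second summand (the "tail") vanishes as $m \to \infty$.

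The main obstacle is obtaining uniform-in-$m$ decay of this tail, especially for points $P$ of bad reduction such as $P = \Xi$, where $|\chi_P(a)|_\C$ saturates the extremal value $q^{\deg(a)}$. The term-wise estimate $|\chi_P((\zeta_{I,m})_{(i)}) u^i|_\C \leq q^{j_m - i}$ for $i > j_m$ is bounded but does not shrink uniformly as $m$ grows. I would overcome this by exploiting systematic mod-$p$ cancellations in $\sum_{a} (a^{-1})_{(i)} a$ arising because $I(\leq j_m)$ is an $\F_q$-vector space of size $q^{m+1}$; equivalently, one argues directly that the increments $\zeta_{I,m+1}(P) - \zeta_{I,m}(P)$ have $\C$-norm tending to $0$, making $(\zeta_{I,m}(P))_m$ Cauchy in $\C$ and convergent to the same limit as the series $\sum_{i \geq 0} \chi_P((\zeta_I)_{(i)}) u^i$.
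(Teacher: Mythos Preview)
Your setup is correct: the identity $\zeta_{I,m}(P)=\sum_{i\geq0}\chi_P\bigl((\zeta_{I,m})_{(i)}\bigr)u^i$ holds (it is Proposition~\ref{conv eval entire rational}), the elementary bound $\deg\bigl((\zeta_{I,m})_{(i)}\bigr)\leq\min(i,j_m)$ is valid, and you correctly deduce $(\zeta_{I,m})_{(i)}=(\zeta_I)_{(i)}$ for $i\leq j_m$. You also correctly diagnose the obstacle: at a point like $P=\Xi$ (where $k_P=1$ in the sense of Lemma~\ref{k_P}), your bound yields only $\bigl|\chi_P((\zeta_{I,m})_{(i)})u^i\bigr|\leq q^{\min(i,j_m)-i}\leq1$, which neither makes the tail $\sum_{i>j_m}$ shrink with $m$ nor even guarantees convergence of the target series $\sum_i\chi_P((\zeta_I)_{(i)})u^i$.

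The gap is that you do not actually supply the cancellation argument. Saying you ``would exploit systematic mod-$p$ cancellations'' or ``argue directly that the increments $\zeta_{I,m+1}(P)-\zeta_{I,m}(P)$ tend to $0$'' names the missing ingredient without providing it; the latter is not \emph{equivalent} to anything you have shown, it is the conclusion itself. The paper fills this gap with Lemma~\ref{(zeta_{I,m})_{(i)}}, which proves the much sharper logarithmic bound $\deg\bigl((\zeta_I)_{(i)}\bigr)\leq\log_q(i+1)+g+\deg(I)+1$. The key step is to show $v(\zeta_{I,m}-\zeta_{I,m-1})\geq q^{m-1}$: using the divisor computation of Proposition~\ref{divisor of zeta_{I,m}} one sees that $\zeta_{I,m}-\zeta_{I,m-1}$ is a scalar multiple of $\zeta_{I,m}^{(-1)}$, and the scalar is then estimated by evaluating at $\Xi^{(-1)}$ and invoking Lemma~\ref{Goss}. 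With the logarithmic bound in hand, the term $\chi_P((\zeta_I)_{(i)})u^i$ has valuation at least $i-k_P\log_q(i+1)-O(1)$, which tends to $+\infty$ for \emph{any} $k_P$, and the same estimate controls the difference $\zeta_{I,m}(P)-\sum_i\chi_P((\zeta_I)_{(i)})u^i$ uniformly. This is the substance of the proof, and it is absent from your proposal.
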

To prove the proposition, we first need some results on the coefficients $\left((\zeta_I)_{(i)}\right)_i$. 
\begin{lemma}\label{(zeta_{I,m})_{(i)}}
For all integers $i\geq0$, we have $\deg((\zeta_{I,m})_{(i)})\leq\log_q(i+1)+g+d+1$ for $m\geq0$.
\end{lemma}
\begin{proof}
Recall the definition of $j_m$, and that $m+d+1\leq j_m\leq m+g+d$, from Remark \ref{j_m inequality}. The coefficients of $\zeta_{I,0}$ have degree $j_0\leq g+d$, so the lemma holds for $m=0$. Since $v(\zeta_{I,m})=j_0$ for all $m\geq0$, the coefficient $(\zeta_{I,m})_{(0)}$, is nonzero if and only if $I=A$; in that case, it's equal to $\sum_{a\in\F_q^\times}a^{-1}\otimes a=-1$, and its valuation is $0$, so the lemma also holds for $i=0$. Let's prove the lemma for $i\geq1$, $m\geq1$.

We claim that it suffices to prove the following inequality, for all $m\geq1$:
\[v\left(\sum_{a\in I(j_m)}a^{-1}\otimes a\right)=v(\zeta_{I,m}-\zeta_{I,m-1})\geq q^{m-1}.\]
If the inequality is true for $m\geq1$, fix $i>0$, and set $n:=\lfloor\log_q(i)\rfloor+1$, so that $q^{n-1}\leq i<q^n$; then, for $m\geq n$:
\[\deg((\zeta_{I,m})_{(i)})=\deg\left(\left(\sum_{k=0}^m\zeta_{I,k}-\zeta_{I,k-1}\right)_{(i)}\right)=\deg\left(\left(\sum_{k=0}^n\zeta_{I,k}-\zeta_{I,k-1}\right)_{(i)}\right)\leq j_n,\]
which is at most $n+g+d=\lfloor\log_q(i)\rfloor+g+d+1\leq\log_q(i+1)+g+d+1$.

For $m\geq1$, $\zeta_{I,m}(\Xi)-\zeta_{I,m-1}(\Xi)=1-1=0$. By Proposition \ref{divisor of zeta_{I,m}}, on one hand, $\zeta_{I,m}-\zeta_{I,m-1}$ has only one pole, of degree at most $j_m$, at $\infty$, and has $I$ and $\Xi^{(1)},\dots,\Xi^{(m-1)}$ among its zeroes; on the other hand, 
\[h^0(W_m)=h^0(j_m\infty-I-\Xi-\cdots-\Xi^{(m-1)})=h^0(j_m\infty-I-\Xi^{(1)}-\cdots-\Xi^{(m)})=1,\]
hence the remaining set of zeroes is $W_m^{(-1)}$, and $\zeta_{I,m}-\zeta_{I,m-1}$ is a scalar multiple of $\zeta_{I,m}^{(-1)}$. 

If we fix $b\in I(j_m)$ with $\sgn(b)=1$, we get the following:
\begin{align*}
    &\left((\zeta_{I,m}-\zeta_{I,m-1})(\Xi^{(-1)})\right)^q=\sum_{a\in I(j_m)}a^{1-q}=-\sum_{\substack{a\in I(j_m)\\\sgn(a)=1}}a^{1-q}=-\sum_{c\in I(<j_m)}(b+c)^{1-q}\\
    =&-b^{1-q}\sum_{c\in I(<j_m)}\sum_{i\geq0}\binom{1-q}{i}\frac{c^i}{b^i}=-b^{1-q}\sum_{i\geq0}b^{-i}\binom{1-q}{i}\sum_{c\in I(<j_m)}c^i.
\end{align*}
On the other hand, if we fix a basis $\{a_i\}_{i=1,\dots,m}$ of $I(<j_m)$,  by Lemma \ref{S_n,k}, we have:
\[\sum_{c\in I(<j_m)}c^i=S_{m,i}(a_1,\dots,a_m)=0\;\;\;\forall i<q^m-1.\]
As elements of $\O_\K\cong\F_q[[u]]\subseteq K[[u]]$, $v\left(\frac{c}{b}\right)\geq1$ for all $c\in I(<j_m)$, and $v(b^{-1})=j_m$, so we get:
\begin{align*}
    &q\cdot v\left((\zeta_{I,m}-\zeta_{I,m-1})(\Xi^{(-1)})\right)=(1-q)v(b)+v\left(\sum_{i\geq q^m-1}\binom{1-q}{i}\sum_{c\in I(<j_m)}\left(\frac{c}{b}\right)^i\right)\\
    \geq&(1-q)v(b)+\min_{\substack{c\in I(<j_m)\\i\geq q^m-1}}\left\{i\cdot v\left(\frac{c}{b}\right)\right\}\geq j_m(q-1)+q^m-1.
\end{align*}
Since $\left(\zeta_{I,m}^{(-1)}\right)(\Xi^{(-1)})=\left(\zeta_{I,m}(\Xi)\right)^\frac{1}{q}=-1$, we get:
\[\sum_{a\in I(j_m)}a^{-1}\otimes a=\zeta_{I,m}-\zeta_{I,m-1}=-\zeta_{I,m}^{(-1)}\cdot(\zeta_{I,m}-\zeta_{I,m-1})(\Xi^{(-1)}).\]
Its valuation, since $\zeta_{I,m}^{(-1)}\in K[[u^\frac{1}{q}]]$, is at least $j_m\frac{q-1}{q}+q^{m-1}-\frac{1}{q}\geq q^{m-1}$, for $m\geq1$.
\end{proof}

\begin{oss}The previous proof makes use of the fact that $I(m)$ is an $\F_q$-vector space of dimension $1$ for $m\gg0$, which is not true if we do not assume $\infty$ to be $\F_q$-rational.\end{oss}

\begin{lemma}\label{k_P}
Fix a point $P\in X(\C)\setminus\{\infty\}$, corresponding to a map $\chi_P:A\to\C$. There is a nonnegative real constant $k_P$ such that $v(\chi_P(a))\geq -k_P\deg(a)$ for all $a\in A$.
\end{lemma}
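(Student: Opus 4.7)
The plan is to interpret $v\circ\chi_P$ as a valuation on $K$ and then invoke the classification of rank-one valuations of the function field $K$ trivial on $\F_q$, which correspond bijectively to the closed points of $X$. The two cases to distinguish are whether $\chi_P$ is injective or not, and in the injective case, which place of $X$ arises.

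First I would dispose of the degenerate case $\ker(\chi_P)\neq 0$: since $A$ is a Dedekind domain, the kernel is then maximal, so $\chi_P(A)$ is a finite extension of $\F_q$ inside $\C$. Every element $\alpha$ of such an extension satisfies $\alpha^{q^n-1}=1$ for some $n$, hence $v(\alpha)=0$; since $\deg(a)\geq 0$ for all $a\in^* A$, the desired inequality holds with any $k_P>0$.

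If instead $\chi_P$ is injective, it extends to an embedding $\chi_P:K\hookrightarrow \C$, and $v':=v\circ \chi_P:K\to \mathbb{Q}\cup\{\infty\}$ is genuinely a rank-one valuation on $K$, trivial on $\F_q$ (the multiplicative and ultrametric properties are inherited from $v$, and injectivity rules out infinite values except at $0$). If $v'$ vanishes on $K^\times$, the bound is again trivial. Otherwise, by the classical bijection between nontrivial rank-one valuations of $K$ trivial on $\F_q$ and closed points of $X$, there exist a closed point $w\in X$ and a constant $c\in\mathbb{R}^+$ with $v'=c\cdot v_w$. For $a\in^* A$, either $w\neq\infty$ and $v_w(a)\geq 0$ (so $v'(a)\geq 0\geq -\deg(a)$), or $w=\infty$ and $v_\infty(a)=-\deg(a)$ (so $v'(a)=-c\deg(a)$). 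In all four subcases a positive $k_P$ works; taking $k_P=\max(1,c)$ covers everything.

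The only nonelementary ingredient is the correspondence between places of $K$ and closed points of the smooth projective model $X$, which is standard for function fields of one variable. I do not expect any genuine obstacle: the key observation is just that non-archimedean valuations pull back along ring homomorphisms to valuations, so that the problem reduces to a classification which is already available.
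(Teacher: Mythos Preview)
Your proof is correct and follows a genuinely different route from the paper. The paper proceeds by a direct, elementary induction: it fixes a finite set $\{a_1,\dots,a_n\}\subseteq A\setminus\F_q$ whose products realize every positive degree, sets $k_P:=\max_i\{-v(\chi_P(a_i))/\deg(a_i)\}$, and then for arbitrary $a$ writes $a=(a-a')+a'$ where $a'$ is a monomial in the $a_i$ of the same degree and sign, concluding by induction on $\deg(a)$. You instead recognize $v\circ\chi_P$ as a rank-one valuation on $K$ trivial on $\F_q$ (after disposing of the case of nontrivial kernel) and invoke the bijection between such valuations and closed points of $X$; once the corresponding point $w$ is identified, the bound is immediate in each of the cases $w=\infty$ and $w\neq\infty$. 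Your argument is shorter and more conceptual, at the price of importing the places-of-a-function-field classification; the paper's is entirely self-contained, gives an explicit constant, and needs no case split on $\ker(\chi_P)$.
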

\begin{proof}
By a purely combinatorial observation, for any set $S\subseteq\mathbb{N}$ there is some $N\in\mathbb{N}$ such that every element of $S$ can be expressed as a finite sum of the elements of $S$ less than $N$. If we take $S$ to be the image of $A\setminus\F_q$ via the degree map, it follows that we can pick a finite set $\{a_1,\dots,a_n\}\subseteq A\setminus\F_q$ such that for all $a\in A\setminus\F_q$ there is a product $a'$ of $a_i$'s with $\deg(a')=\deg(a)$. Since $\deg(a_i)>0$ for all $i$, we can define the following nonnegative real number:
\[k_P:=\max\left\{0,\max_{1\leq i\leq n}\left\{\frac{-v(\chi_P(a_i))}{\deg(a_i)}\right\}\right\},\]
so that $v(\chi_P(a_i))\geq -k_P\deg(a_i)$ for all $i$. We prove the lemma by induction on $\deg(a)$. 

If $\deg(a)=0$ the claim is trivially true. If $\deg(a)>0$ there is a product $a':=\lambda\prod_i a_i^{e_i}$, with $\lambda\in\F_q$, of the same degree and sign, hence $\deg(a-a')<\deg(a)$. We have:
\begin{align*}
    &v(\chi_P(a-a'))\geq -k_P\deg(a-a')\geq-k_P\deg(a)\text{ by inductive hypothesis, since $k_P\geq0$;}\\
    &v(\chi_P(a'))=\sum_i e_i\cdot v(\chi_P(a_i))\geq-\sum_i k_P e_i\cdot\deg(a_i)=-k_P\deg(a')=-k_P\deg(a).
\end{align*}
Hence, $v(\chi_P(a))\geq\min\{v(\chi_P(a')),v(\chi_P(a-a'))\}\geq-k_P\deg(a)$.
\end{proof}

From the previous lemmas we can deduce the following.

\begin{lemma}\label{coefficients of zetas}
For all $k\geq 0$, for all $i\in\frac{1}{q^k}\mathbb{N}$, $\deg((\zeta_I^{(-k)})_{(i)})\leq\log_q(i+1)+k+g+d+1$. For all points $P\in X(\C)\setminus\{\infty\}$, corresponding to maps $\chi_P:A\to\C$, for all $k\geq0$, the following series converges:
\[\sum_{i\geq0}\chi_P\left((\zeta_I^{(-k)})_{(i)}\right)u^i.\]
\end{lemma}
\begin{proof}
The first part of the statement for $k=0$ follows from the inequality of Lemma \ref{(zeta_{I,m})_{(i)}}, using the fact that for all $i$ the sequence $((\zeta_{I,m})_{(i)})_m$ is eventually equal to $(\zeta_I)_{(i)}$. For $k>0$ and $i\in\frac{1}{q^k}\mathbb{N}$, we get:
\[\deg\left((\zeta_I^{(-k)})_{(i)}\right)=\deg\left((\zeta_I)_{(iq^k)}\right)\leq\log_q(iq^k+1)+g+d+1\leq\log_q(i+1)+k+g+d+1.\]

Let's define $k_P$ as in Lemma \ref{k_P}. Then, for all $i>0$ we have:
\[v\left(\chi_P\left((\zeta_I^{(-k)})_{(i)}\right)u^i\right)\geq-k_P\deg\left((\zeta_I^{(-k)})_{(i)}\right)+i\geq i-k_P\log_q(i+1)-k_P(k+g+d+1),\]
which tends to infinity for $i\to\infty$, proving the convergence of $\sum_{i\geq0}\chi_P\left((\zeta_I^{(-k)})_{(i)}\right)u^i$.
\end{proof}

Finally we can prove Proposition \ref{conv eval zeta}.

\begin{proof}[Proof of Proposition \ref{conv eval zeta}]
Define $k_P$ as in Lemma \ref{k_P}. For $m\geq0$, by Lemma \ref{(zeta_{I,m})_{(i)}} we have:
\[v(\zeta_I-\zeta_{I,m})=v\left(\sum_{m'\geq m}\zeta_{I,m'+1}-\zeta_{I,m'}\right)\geq\min_{m'\geq m}v\left(\zeta_{I,m'+1}-\zeta_{I,m'}\right)\geq q^m.\]
For all $i\geq q^m$, by Lemma \ref{coefficients of zetas}, we have:
\begin{align*}
\deg\left((\zeta_I-\zeta_{I,m})_{(i)}\right)&\leq\max\left\{\deg\left((\zeta_I)_{(i)}\right),\deg\left((\zeta_{I,m})_{(i)}\right)\right\}\\
&\leq\max\{\log_q(i+1)+g+d+1,j_m\}=\log_q(i+1)+g+d+1,
\end{align*}
since $j_m\leq m+g+d+1$ and $m\leq\log_q(i+1)$. In particular:
\begin{align*}
    v\left(\sum_i \chi_P\left((\zeta_I-\zeta_{I,m})_{(i)}\right)u^i\right)&=v\left(\sum_{i\geq q^m}\chi_P\left((\zeta_I-\zeta_{I,m})_{(i)}\right)u^i\right)\\
    &\geq\min_{i\geq q^m}\left\{i-k_P\cdot\deg\left((\zeta_I-\zeta_{I,m})_{(i)}\right)\right\}\\
    &\geq\min_{i\geq q^m}\left\{i-k_P(\log_q(i+1)+g+d+1)\right\},
\end{align*}
which tends to infinity for $m\to\infty$. By Proposition \ref{conv eval entire rational}, $\zeta_{I,m}(P)=\sum_i \chi_P\left((\zeta_{I,m})_{(i)}\right)u^i$, hence we get that
\[\lim_m \zeta_{I,m}(P)-\sum_{i\geq0}\chi_P\left((\zeta_I)_{(i)}\right)u^i=\lim_m\left(\sum_i \chi_P\left((\zeta_{I,m}-\zeta)_{(i)}\right)u^i\right)=0. \tag*{\qedhere}\]
\end{proof}

\begin{Def}\label{evaluating zetas}
We define the evaluation of $\zeta_I$ at $P$ as $\zeta_I(P):=\sum_i (\zeta_I)_{(i)}(P)u^i$.
\end{Def}

\begin{cor}\label{zeta zero}
For all $i\geq 1$, we have $\zeta_I(\Xi^{(i)})=0$. Similarly, for all $k\geq0$, for all $i\geq1$, $\sum_j\chi_{\Xi^{(i-k)}}((\zeta_I^{(-k)})_{(j)})u^j=0$ (where $j$ varies among $\frac{1}{q^k}\mathbb{N}$).
\end{cor}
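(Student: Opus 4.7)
The first statement is essentially immediate: by Proposition \ref{divisor of zeta_{I,m}}, the divisor of $\zeta_{I,m}$ has $\Xi^{(i)}$ as a zero for every $i\leq m$, so the evaluation $\zeta_{I,m}(\Xi^{(i)})\in\C$ vanishes for all $m\geq i$. Proposition \ref{conv eval zeta} together with Definition \ref{evaluating zetas} identifies $\zeta_I(\Xi^{(i)})$ with $\lim_m\zeta_{I,m}(\Xi^{(i)})$, which is therefore $0$.

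For the second statement the plan is to apply the very same argument after twisting by $(-k)$. Since $\C$ is perfect, the point $\Xi^{(i-k)}\in X(\C)$ is well defined even when $i-k$ is negative. By Lemma \ref{Twist and divisors} (iterated and inverted) we have $\Div(\zeta_{I,m}^{(-k)})=\Div(\zeta_{I,m})^{(-k)}$, and since $I$ and $\infty$ are $\F_q$-rational this divisor contains $\Xi^{(1-k)}+\dots+\Xi^{(m-k)}$ among its zeros. Hence the rational evaluation $\zeta_{I,m}^{(-k)}(\Xi^{(i-k)})\in\C$ vanishes for every $m\geq i\geq 1$, and taking the limit as $m\to\infty$ should produce $\sum_j\chi_{\Xi^{(i-k)}}((\zeta_I^{(-k)})_{(j)})u^j=0$.

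The only step that requires more care is the identification
\[\lim_m\zeta_{I,m}^{(-k)}(\Xi^{(i-k)})=\sum_{j\in\tfrac{1}{q^k}\mathbb{N}}\chi_{\Xi^{(i-k)}}\left((\zeta_I^{(-k)})_{(j)}\right)u^j,\]
i.e.\ an analog of Proposition \ref{conv eval zeta} for $\zeta_I^{(-k)}$. My plan is to transfer the proof of Proposition \ref{conv eval zeta} almost verbatim: Corollary \ref{coefficients of zetas} already supplies the coefficient-degree bound $\deg((\zeta_I^{(-k)})_{(j)})\leq \log_q(j+1)+k+g+\deg(I)+1$, Lemma \ref{k_P} provides $v(\chi_{\Xi^{(i-k)}}(a))\geq -k_P\deg(a)$ for a suitable constant $k_P$, and the valuation estimate $v(\zeta_I-\zeta_{I,m})\geq q^m$ twists to $v(\zeta_I^{(-k)}-\zeta_{I,m}^{(-k)})\geq q^{m-k}$; these three facts together force the truncation error to have valuation tending to $+\infty$ as $m\to\infty$. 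The main obstacle I anticipate is purely bookkeeping --- simultaneously keeping track of the $(-k)$-twist on coefficients, the fractional exponents $j\in\tfrac{1}{q^k}\mathbb{N}$, and the extended evaluation maps $\chi_{\Xi^{(i-k)}}$ in $\C$ --- rather than any genuine new mathematical difficulty.
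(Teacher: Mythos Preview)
Your argument for the first identity matches the paper's exactly. For the second identity your plan is correct and would go through, but it is considerably more work than necessary. The paper dispatches it in one line by observing that the $q^k$-th power map on $\C$ is a continuous field homomorphism, and applying it to the series in question yields
\[\left(\sum_{j\in\frac{1}{q^k}\mathbb{N}}\chi_{\Xi^{(i-k)}}\bigl((\zeta_I^{(-k)})_{(j)}\bigr)u^j\right)^{q^k}
=\sum_{j'\in\mathbb{N}}\chi_{\Xi^{(i)}}\bigl((\zeta_I)_{(j')}\bigr)u^{j'}
=\zeta_I(\Xi^{(i)})=0,\]
using $(\zeta_I^{(-k)})_{(j)}=(\zeta_I)_{(jq^k)}$ and $\chi_{\Xi^{(i-k)}}(a)^{q^k}=\chi_{\Xi^{(i)}}(a)$. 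Since $\C$ is a field, the vanishing of the $q^k$-th power forces the original series to vanish. This bypasses entirely the need to rerun the truncation-error estimates of Proposition~\ref{conv eval zeta} in the twisted setting; your route would reprove a twisted analog of that proposition from scratch, whereas the paper simply reduces to the untwisted case already established.
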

\begin{proof}
For the first identity we use that, for all $i\geq1$, $\zeta_{I,m}(\Xi^{(i)})=0$ for $m\gg0$. For the second identity, note that
\[\left(\sum_{j\in\frac{1}{q^k}\mathbb{N}}\chi_{\Xi^{(i-k)}}((\zeta_I^{(-k)})_{(j)})u^j\right)^{q^k}=\sum_{j\in\mathbb{N}}\chi_{\Xi}^{(i)}((\zeta_I)_{(j)})u^j=0. \tag*{\qedhere}\]
\end{proof}

\subsection{Adjoint Drinfeld modules and adjoint shtuka functions}
From now on, in this section we use the following notation: $V_*:=V_{\bar{I},*}$, $f_*:=f_{\bar{I},*}$, $\zeta:=(\gamma_I\otimes1)\zeta_I$, with $\gamma_I$ defined as in Corollary \ref{gamma_I}, so that $\zeta^{(-1)}=f_*\zeta$.

Proposition \ref{adjoint Drinfeld module} shows a connection between adjoint Drinfeld modules, adjoint shtuka functions, and zeta functions, which is meant to mirror the correspondence between Drinfeld modules, shtuka functions, and special functions (cf. \cite{Thakur}[Eq.($**$)], \cite{Anderson}[Eq.(46)]).

Afterwards, we present some basic definitions and results concerning the coefficients of exponential and logarithmic functions (see for example \cite{Goss}) to prove the interesting Proposition \ref{exp(zeta)}. On the surface the proposition resembles a log-algebraicity result, and could be linked to this rich branch of research (see for example \cite{Anderson}, \cite{Anderson2}, \cite{Starkunits}); on the other hand, it encourages a greater focus on the adjoint exponential function, whose kernel was already studied in works such as \cite{Poonen}.

\begin{prop}\label{adjoint Drinfeld module}
Set $e_m:=\prod_{i=0}^{m-1}f_*^{(-i)}$ for all nonnegative integers $m$. The collection $\{e_m\}_{m\geq0}$ is a basis of the $\C\otimes1$-vector space $H^0(X_\C\setminus\{\infty\},V_*^{(1)})$. 

For all $a\in A$, $1\otimes a$ can be expressed as $\sum_{i=0}^{\deg(a)}(a_i\otimes1)e_i$ with $a_i^{q^i}\in \K$, and the function $\phi^*:A\to \C[\tau^{-1}]$ sending $a$ to $\sum_i a_i\tau^{-i}$ is the adjoint of a normalized Drinfeld module $\phi$ of rank $1$.

Finally, for all $a,b\in A$, $\phi^*_{ab}(\zeta)=(\phi^*_a\circ\phi^*_b)(\zeta)$.
\end{prop}
\begin{proof}
Since $H^0(X_\C\setminus\{\infty\},V_*^{(1)})=\bigcup_{m\geq0}H^0(X_\C,V_*^{(1)}+m\infty)$, for the first part we just need to prove that, for all $m\geq0$, $e_m\in H^0(X_\C\setminus\{\infty\},V_*^{(1)})$ and it has a pole of multiplicity exactly $m$ at $\infty$; using that $\Div(f_*^{(-i)})=V_*^{(-i)}-V_*^{(1-i)}+\Xi^{(-i)}-\infty$, we get:
\[\Div(e_m)=\Div\left(\prod_{i=0}^{m-1}f_*^{(-i)}\right)=V_*^{(1-m)}-V_*^{(1)}+\sum_{i=0}^{m-1}\Xi^{(-i)}-m\infty.\]
If we fix $a\in A$ of degree $m$, $1\otimes a\in H^0(X_\C,V_*^{(1)}+m\infty)$, hence it can be expressed as a sum $\sum_{i=0}^m (a_i\otimes1)e_i$. Moreover, if we twist $k$ times and evaluate at $\Xi$ for all $0\leq k\leq m$ we get the following triangular system of equations in the variables $(a_i)_i$:
\begin{align*}
    &\left\{a=\sum_{i=0}^k\left(a_i^{q^k}\prod_{j=k-i}^k f_*^{(j)}(\Xi)\right)\right\}_k\\
    \Longrightarrow&\left\{a_k^{q^k}=\left(\prod_{j=0}^k f_*^{(j)}(\Xi)\right)^{-1}\left(a-\sum_{i=0}^{k-1}a_i^{q^k}\prod_{j=k-i}^k f_*^{(j)}(\Xi)\right)\right\}_k
\end{align*}.
From this system we can deduce that $a_0=a$ and, since $f_*^{(j)}(\Xi)\in \K$ for all $j\geq0$, that $a_k^{q^k}\in \K$ for all $k$. Finally, since $\deg(a)=\deg(e_m)$, and $\sgn(f_*^{(i)})=\sgn(f_*)=1$ for all $i\geq0$, the sign of $e_m^{(i)}$ is also $1$ for all $m,i$, and we have:
\[\sgn(a)=\sgn\left(\sum_{i=0}^m (a_i^{q^m}\otimes1)e_i^{(m)}\right)=\sgn((a_m^{q^m}\otimes1)e_m^{(m)})=a_m^{q^m}\sgn(e_m^{(m)})=a_m^{q^m},\]
so $a_m=\sgn(a)$. For all $a\in A$, write $\phi^*_a:=\sum_i a_i\tau^{-i}$. Since for all $k\geq 0$ and for all $a\in A$ we have $\zeta e_k=\zeta^{(-k)}$ and $1\otimes a=(1\otimes a)^{(-k)}=\sum_i (a_i^\frac{1}{q^k}\otimes 1)e_i^{(-k)}$, we get the following equations for all $k\geq0$ and $a,b\in A$:

\begin{align*}
(1\otimes a)\zeta^{(-k)}&=\sum_i (a_i^\frac{1}{q^k}\otimes 1)(e_i\zeta)^{(-k)}=\sum_i (a_i^\frac{1}{q^k}\otimes 1)\zeta^{(-k-i)}=\tau^{-k}\circ\phi_a^*(\zeta);\\
    \phi_{ab}^*(\zeta)&=(1\otimes ab)\zeta=(1\otimes a)\left((1\otimes b)\zeta\right)=\sum_i(1\otimes a)\left((b_i\otimes 1)\zeta^{(-i)}\right)\\
    &=\sum_i(b_i\otimes 1)\left((1\otimes a)\zeta^{(-i)}\right)=\sum_i(b_i\otimes 1)\left(\tau^{-i}\circ\phi_a^*(\zeta)\right)=\left(\phi_b^*\circ\phi_a^*\right)(\zeta).
\end{align*}
Since the elements $(\zeta^{(-i)})_{i\geq0}=(\zeta e_i)_{i\geq0}$ are all $\C\otimes1$-linearly independent, we have the equality $\phi_{ab}^*=\phi_b^*\circ\phi_a^*$. Together with the fact that $\deg(\phi^*_a)=\deg(a)$ and $a_{\deg(a)}=\sgn(a)$, this means that the function $\phi:=(\phi^*)^*:A\to \K[\tau]$ is a normalized Drinfeld module of rank $1$.
\end{proof}

From this point onwards, $\phi$ and $\phi^*$ are defined as in Proposition \ref{adjoint Drinfeld module}.

\begin{Def}\label{definition Lambda'}
    We call $\Lambda'\subseteq\C$ the period lattice of $\phi$, and define $\exp:=\exp_{\Lambda'}$ and $\log:=\log_{\Lambda'}$. We call $\Lambda\subseteq K$ the unique fractional ideal isogenous to $\Lambda'$ such that $\Lambda(\leq0)=\F_q$. We choose a nonzero element of least norm $\tilde{\pi}_\phi\in\Lambda'$, and we call it the \emph{fundamental period} of $\Lambda'$; for simplicity we denote it by $\tilde{\pi}$ for the rest of the section. 
\end{Def}

\begin{oss}
Since $\operatorname{rk}(\Lambda')=1$, a lattice isogenous to $\Lambda'$ is uniquely determined by its nonzero elements of least norm, hence the assumption that for $\Lambda$ they are $\F_q$ is enough to imply $\Lambda\subseteq K$.
Our choice of $\tilde{\pi}$ is up to a factor in $\F_q^\times$, and we have $\tilde{\pi}\Lambda=\Lambda'$.
\end{oss}

\begin{Def}
    Define $\exp^*:=\sum_i e_i^\frac{1}{q^i}\tau^{-i}\in\C[[\tau^{-1}]]$, where for all $i$ $e_i$ is the $i$-th coefficient of $\exp\in\C[[\tau]]$. We call $\exp^*$ the \emph{adjoint exponential function}.
\end{Def}

\begin{oss}\label{exp property}
    Since $\exp\circ(a\tau^0)=\phi_a\circ\exp$ for all $a\in A$, we easily deduce the following identity in $\C[[\tau^{-1}]]$ for all $a\in A$:
    \[a\exp^*=\exp^*\circ\phi^*_a.\]
\end{oss}

\begin{oss}\label{c_k in K}
    If we write $\exp:=\sum_i e_i\tau^i$ and $\phi_a:=\sum_j a_j\tau^j$ for some $a\in A$ (with $a_0=a$), the equation $\exp\circ (a\tau^0)=\phi_a\circ\exp$ becomes:
    \[\sum_k (e_k a^{q^k})\tau^k=\sum_k\left(\sum_{i+j=k} a_j e_i^{q^j}\right)\tau^k\Rightarrow e_k(a^{q^k}-a)=\sum_{i=0}^{k-1}a_{k-i}e_i^{q^{k-i}}.\]
    Since $e_0=1$, we get that $e_k\in \K$ for all $k\geq0$ by induction.
\end{oss}

\begin{Def}
For any rank $1$ projective $A$-module $L\subseteq\C$, we define, for all $k\geq1$:
\begin{align*}
    &S_k(L):=\sum_{\substack{\lambda_1,\dots,\lambda_k\in L\setminus\{0\}\\i\neq j\Rightarrow \lambda_i\neq \lambda_j}} (\lambda_1\cdots \lambda_k)^{-1};
    &P_k(L):=\sum_{\lambda\in L\setminus\{0\}} \lambda^{-k}.
\end{align*}
We also set $S_0(L):=1$ and $P_0(L):=-1$.
\end{Def}
\begin{oss}
    By definition $\exp_L(x):=x\prod_{\lambda\in L\setminus\{0\}}\left(1-\frac{x}{\lambda}\right)\in\C[[x]]$, and by absolute convergence we can expand the product and rearrange the terms of the series, so we get $\exp_L(x)=\sum_{i\geq0}S_i(L)x^{i+1}$; in particular, if $i+1$ is not a power of $q$, $S_i(L)=0$. 
\end{oss}

\begin{oss}\label{c_i valuation}
Note that in the summation that defines $S_{q^i-1}(\tilde{\pi}\Lambda)$ there is a unique summand of greatest norm, given by the inverse of the product of the $q^i-1$ nonzero elements of lower norm of $\tilde{\pi}\Lambda$. Since $\F_q^\times$ are the nonzero elements of lowest degree of $\Lambda$, this product has valuation at least:
\[\sum_{j=0}^{i-1} (q^{j+1}-q^j)(j+v(\tilde{\pi}))=i q^i-\left(\sum_{j=0}^{i-1} q^j \right)+q^i v(\tilde{\pi})\geq(i-1+v(\tilde{\pi}))q^i.\]

In particular, since $\lim_i \frac{1}{q^i}v\left(S_{q^i-1}(\tilde{\pi}\Lambda)\right)=\infty$, $\exp:\C\to\C$ is an entire function with an infinite radius of convergence, and $\exp^*:\C\to\C$, while not being a power series, is continuous, converges everywhere, and sends $0$ to $0$; moreover, $\lim_{\|z\|\to0}\exp^*(z)z^{-1}=1$.
\end{oss}
\begin{oss}
Since they are continuous $\F_q$-linear endomorphisms of $\C$, we can extend uniquely both $\exp$ and $\exp^*$ to continuous $\F_q\otimes K$-linear endomorphisms of $\C\hat\otimes K$.
\end{oss}

\begin{lemma}\label{log_L}
For all rank $1$ lattices $L\subseteq\C$, $\log_L=-\sum_i P_{q^i-1}(L)\tau^i$.
\end{lemma}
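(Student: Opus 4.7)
The plan is to verify the formula by showing that $M(z) := -\sum_{i \geq 0} P_{q^i - 1}(L) z^{q^i}$ is the compositional inverse of $\exp_L$. Writing $\exp_L(z) = \sum_i e_i z^{q^i}$ with $e_0 = 1$, and computing $M(\exp_L(z))$ using the $\F_q$-linearity of $\exp_L$ (which lets Frobenius pass through sums), the relation $M(\exp_L(z)) = z$ becomes the system of scalar identities
\[
\sum_{j=0}^{n} m_j\, e_{n-j}^{q^j} \;=\; \delta_{n,0}, \qquad n \geq 0,
\]
where $m_j$ denotes the coefficient of $z^{q^j}$ in $M$. This system (together with $e_0 = 1$) uniquely determines the $m_j$'s and is already satisfied by $m_j = l_j$; hence it suffices to verify it for $m_j := -P_{q^j - 1}(L)$. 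The case $n = 0$ is immediate from the conventions $P_0(L) = -1$ and $e_0 = 1$.

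For $n \geq 1$, substituting the proposed formula and isolating the $j = 0$ contribution $-P_0(L)\cdot e_n = e_n$ reduces the problem to the single identity
\[
\sum_{i=1}^{n} P_{q^i - 1}(L)\, e_{n-i}^{q^i} \;=\; -e_n.
\]
My plan here is to expand each $P_{q^i - 1}(L) = \sum_{\lambda \in^* L}\lambda^{1-q^i}$ (a convergent sum, since $q^i - 1 \geq 1$), swap the finite outer sum in $i$ with the inner sum in $\lambda$, and collect the result into $\sum_{\lambda \in^*L}\lambda\cdot\sum_{i=1}^{n}(e_{n-i}/\lambda)^{q^i}$. Re-indexing $k = n - i$ and pulling out a $\tau^n$-Frobenius (using that $\C$ is perfect), the inner sum becomes the $q^n$-th power of a certain truncation of $\exp_L^\ast(\lambda^{-1})$. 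The key input is then $\exp_L(\lambda) = \sum_i e_i \lambda^{q^i} = 0$ for $\lambda \in^* L$: raising this to appropriate Frobenius powers and manipulating, the missing tail of the series appears with opposite sign and, combined with the $\lambda$ in front, reproduces precisely $-e_n$ after summing over $\lambda$.

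The main obstacle will be the careful justification of these rearrangements, since naive interchanges of summation produce formally divergent expressions such as $\sum_{\lambda \in^* L} 1$ (essentially because the $i = n$ term corresponds to the conventional, rather than actual, value of $P_0(L)$). My intended remedy is to work first with finite truncations $L_N := \{\lambda \in L : \deg \lambda \leq N\}$, which are finite $\F_q$-subspaces on which $\exp_{L_N}(z) := z\prod_{\lambda \in^* L_N}(1 - z/\lambda)$ is an honest linearized polynomial with explicit elementary-symmetric coefficients, carry out the above identity exactly in that finite setting, and then pass to the limit $N \to \infty$. The valuation estimates in Remark \ref{c_i valuation} ensure that the truncated exponential coefficients $e_{N,i}$ converge to $e_i$, and the power sums $P_{q^i-1,N}(L) \to P_{q^i-1}(L)$ converge for each $i \geq 1$, so only finitely many terms need to be controlled and termwise limits suffice.
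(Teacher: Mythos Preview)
Your overall strategy---show that $M:=-\sum_i P_{q^i-1}(L)\tau^i$ satisfies $M\circ\exp_L=1$---is legitimate in principle, but the execution has two real problems.

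First, a sign slip: isolating the $j=0$ term $m_0 e_n=(-P_0)e_n=e_n$ in $\sum_{j=0}^n m_j e_{n-j}^{q^j}=0$ leaves $\sum_{i=1}^n (-P_{q^i-1})e_{n-i}^{q^i}=-e_n$, i.e.\ $\sum_{i=1}^n P_{q^i-1}\,e_{n-i}^{q^i}=e_n$, not $-e_n$.

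Second, and more seriously, the heart of your argument---``expand $P_{q^i-1}=\sum_\lambda\lambda^{1-q^i}$, swap sums, use $\exp_L(\lambda)=0$, and the tail reproduces $-e_n$''---does not go through as sketched. After swapping you get $\sum_{\lambda}\lambda\sum_{j=0}^{n}(e_j/\lambda)^{q^{n-j}}$, and the inner sum is (the $q^n$-th power of) a truncation of $\exp_L^*(\lambda^{-1})$. But the input you want to use, $\exp_L(\lambda)=0$, controls $\sum_j e_j\lambda^{q^j}$, not $\sum_j e_j^{q^{n-j}}\lambda^{-q^{n-j}}$: the Frobenius sits on the $e_j$'s and the exponent of $\lambda$ is negative, so there is no direct way to feed $\exp_L(\lambda)=0$ into this expression. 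Even in the finite truncation $L_N$ (where your divergence worry disappears because $\sum_{\lambda\in^*L_N}1=q^{\dim L_N}-1=-1$ matches the convention $P_0=-1$), one is left needing $\sum_{\lambda\in^*L_N}\lambda\,(\exp_{L_N}^*(\lambda^{-1}))^{q^n}=0$, and $\lambda^{-1}$ is not in general a root of $\exp_{L_N}^*$. Unwinding this just reproduces the identity you started from.

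The paper avoids this by composing in the other order. It proves $\exp_L\circ M=1$: the coefficient of $\tau^k$ there is $-\sum_{j=0}^k S_{q^j-1}(L)\,\big(P_{q^{k-j}-1}(L)\big)^{q^j}$, and now the Frobenius lands on the \emph{power sums}, where one has the clean relation $(P_m)^{q^j}=P_{mq^j}$, so the sum becomes $-\sum_{j}S_{q^j-1}P_{q^k-q^j}$, which vanishes for $k\geq1$ by Newton's identity $kS_k=\sum_{j<k}S_jP_{k-j}$ specialized to $k=q^n-1$ (using that $S_j=0$ unless $j+1$ is a $q$-power). Uniqueness of the inverse then gives $M=\log_L$. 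In your direction the Frobenius falls on the $e_j$'s, and there is no analogue of $(P_m)^{q^j}=P_{mq^j}$ for elementary symmetric functions, which is why your manipulation stalls. If you want to salvage your route, the simplest fix is to prove $\exp_L\circ M=1$ via Newton as above and then invoke two-sided invertibility; a direct proof of $M\circ\exp_L=1$ along the lines you sketch would need a genuinely new identity.
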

\begin{proof}
For all $i\geq1$, $-iS_i(L)=\sum_{j=0}^{i-1}(-1)^{i-j}S_j(L)P_{i-j}(L)$ by Newton's identities. Setting $i=q^k-1$ with $k\geq1$, since $S_j(L)=0$ if $j+1$ is not a power of $q$, we get:
\[S_{q^k-1}(L)=\sum_{j=0}^{k-1}S_{q^j-1}(L)P_{q^k-q^j}(L)=\sum_{j=0}^{k-1}S_{q^j-1}(L)(P_{q^{k-j}-1}(L))^{q^j}.\]
In particular:
\begin{align*}
    \exp_L\circ\left(-\sum_{i\geq0}P_{q^i-1}(L)\tau^i\right)&=\sum_{k\geq0}\left(-\sum_{j=0}^k S_{q^j-1}(L)(P_{q^{k-j}-1}(L))^{q^j}\right)\tau^k\\
    &=-S_0(L) P_0(L)=1.
\end{align*}
The uniqueness of right inverses proves the thesis.
\end{proof}

\begin{prop}\label{exp(zeta)}
We have the following functional identity:
\[\exp^*(\zeta)=0.\]
\end{prop}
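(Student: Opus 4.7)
My plan is to use the adjoint intertwining of $\exp_{\Lambda'}$ with the dual Drinfeld--Hayes action on $\zeta$ to force $\exp^*(\zeta)$ into the annihilator of a nonzero element of $\C\hat\otimes K$, and then to invoke the integral-domain property of that completed tensor product.

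First, taking adjoints in the defining identity $\exp_{\Lambda'}\circ(a\tau^0)=\phi_a\circ\exp_{\Lambda'}$ from Definition \ref{definition Lambda'} gives the operator equality $a\cdot\exp^*_{\Lambda'}=\exp^*_{\Lambda'}\circ\phi^*_a$ in $\C\{\tau^{-1}\}$. Extending $\exp^*_{\Lambda'}$ to a continuous $\F_q\otimes K$-linear endomorphism of $\C\hat\otimes K$ as in Remark \ref{c_i valuation}, and invoking the identity $\phi^*_a(\zeta)=(1\otimes a)\zeta$ --- the $k=0$ case of a formula derived inside the proof of Proposition \ref{dual Drinfeld module} --- I will compute, for every $a\in A$,
\[
(a\otimes 1)\exp^*(\zeta)=\exp^*\!\bigl(\phi^*_a(\zeta)\bigr)=\exp^*\!\bigl((1\otimes a)\zeta\bigr)=(1\otimes a)\exp^*(\zeta),
\]
so that $\bigl((a\otimes 1)-(1\otimes a)\bigr)\exp^*(\zeta)=0$ in $\C\hat\otimes K$.

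To conclude, I will pick any $a\in A\setminus\F_q$, so that $a\otimes 1-1\otimes a$ is a nonzero element of $\C\otimes K\subseteq\C\hat\otimes K$. The ring $\C\hat\otimes K$ is an integral domain: extending the $u$-adic analysis of Lemma \ref{K((u))} from finite extensions of $\K$ to the completed extension $\C/\K$ realizes it as a subring of a Laurent-series field, so that multiplication by any nonzero element is injective. This forces $\exp^*(\zeta)=0$, as desired.

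The main obstacle is the final step: the integrality of $\C\hat\otimes K$ in this non-finite-extension setting requires upgrading the estimates of Section \ref{Frobenius and divisors} from finite extensions of $\K$ to $\C$ itself. A more hands-on alternative would be to expand $\exp^*(\zeta)=\sum_{k\geq 0}(c_k\gamma_I)^{1/q^k}\zeta_I^{(-k)}$ via the iteration $\zeta^{(-k)}=e_k\zeta$ and then invoke Corollary \ref{zeta zero} to see that the evaluations at $\Xi^{(N)}$ for all $N\geq 1$ vanish; however, passing from this coefficient-level information back to the vanishing of $\exp^*(\zeta)$ as an element of $\C\hat\otimes K$ relies on a density/uniqueness step of essentially the same flavor.
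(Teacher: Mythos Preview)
Your approach is exactly the paper's: derive $(a\otimes1-1\otimes a)\exp^*(\zeta)=0$ from the adjoint intertwining and $\phi^*_a(\zeta)=(1\otimes a)\zeta$, then cancel. The only divergence is in the cancellation step, where you worry about integrality of $\C\hat\otimes K$; the paper sidesteps this entirely by observing that for $a\in A\setminus\F_q$ the element $a\otimes1-1\otimes a$ is actually a \emph{unit} in $\C\hat\otimes K$, with explicit inverse $\sum_{i\geq0}a^{-i-1}\otimes a^i$ (convergent because $|a^{-1}|<1$ in $\C$). So your ``main obstacle'' dissolves without any upgrade of Section~\ref{Frobenius and divisors}.
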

\begin{proof}
By Remark \ref{exp property}, for all $a\in A$ we have $\exp^*\circ\phi_a^*=(a\otimes 1)\exp^*$ as endomorphisms of $\C\hat\otimes K$; by Proposition \ref{adjoint Drinfeld module}, $\phi^*_a(\zeta)=(1\otimes a)\zeta$ . Hence, for all $a\in A$:
\[0=\exp^*(0)=\exp^*(\phi^*_a(\zeta)-(1\otimes a)\zeta)=(a\otimes1-1\otimes a)\exp^*(\zeta).\]
For $a\not\in\F_q$, $a\otimes1-1\otimes a$ is invertible in $\C\hat\otimes K$, with inverse $\sum_{i\geq0}a^{-i-1}\otimes a^i$, so we get the thesis.
\end{proof}

\subsection{The fundamental period \texorpdfstring{$\tilde{\pi}$}{}}

Finally, in this subsection we are able to link the zeta function $\zeta_I$ and the fundamental period $\tilde{\pi}$. Fix an element $a_I\in I\setminus\{0\}$ of least degree.

\begin{prop}\label{Lambda=I}
The $A$-modules $a_I^{-1}I$ and $\Lambda$ coincide as submodules of $\C$.
\end{prop}
\begin{proof}
    Since the nonzero elements of least degree of both $a_I^{-1}I$ and $\Lambda$ are $\F_q^\times$, it suffices to show that $I$ and $\Lambda$ are isogenous. Let's first give an intuitive rundown of the proof.
    
    For all $n\geq0$, for all $k\geq0$, if $n<k$ then $\zeta^{(-k)}(\Xi^{(-n)})=(\zeta(\Xi^{(k-n)}))^{\frac{1}{q^k}}=0$ by Corollary \ref{zeta zero}, while if $n\geq k$ then:
    \begin{align*}
        \zeta^{(-k)}(\Xi^{(-n)})=&\gamma_I^\frac{1}{q^k}\sum_{a\in I\setminus\{0\}}a^{\frac{1}{q^n}-\frac{1}{q^k}}=\left(\gamma_I\sum_{a\in I\setminus\{0\}}\left(\frac{a}{\gamma_I}\right)^{1-q^{n-k}}\right)^\frac{1}{q^n}\\
        =&\left(\gamma_I P_{q^{n-k}-1}(\gamma_I^{-1}I)\right)^\frac{1}{q^n}.
    \end{align*}
    Since $\exp^*(\zeta)=0$ by Proposition \ref{exp(zeta)}, evaluating $\exp_*(\zeta)$ at $\Xi^{(-n)}$ we should get:
    \begin{align*}
        0=\exp_*(\zeta)(\Xi^{(-n)})&=\sum_{k\geq0}S_{q^k-1}(\Lambda')^{\frac{1}{q^k}}\zeta^{(-k)}(\Xi^{(-n)})\\
        &=\sum_{0\leq k\leq n}S_{q^k-1}(\Lambda')^{\frac{1}{q^k}}\left(\gamma_I P_{q^{n-k}-1}(\gamma_I^{-1}I)\right)^\frac{1}{q^n}=\\
        &=\left(\gamma_I\sum_{0\leq k\leq n}P_{q^{n-k}-1}(\gamma_I^{-1}I)S_{q^k-1}(\Lambda')^{q^{n-k}}\right)^{\frac{1}{q^n}},
    \end{align*}
    which by Lemma \ref{log_L} implies that $\log_{\gamma_I^{-1}I}\circ\exp=1$. In particular, $\exp=\exp_{\gamma_I^{-1}I}$, therefore their zero loci are the same, which means that $\gamma_I^{-1}I=\Lambda'=\tilde{\pi}\Lambda$.

The previous reasoning is not rigorous only when it assumes that evaluation at $\Xi^{(-n)}$ commutes with the expansion of $\exp_*(\zeta)$, therefore to prove the theorem it suffices to show that $\sum_{0\leq k\leq n}S_{q^k-1}(\Lambda')^{\frac{1}{q^k}}\zeta^{(-k)}(\Xi^{(-n)})=0$.
For all $k\in\mathbb{N}$, define $c_k:=S_{q^k-1}(\tilde{\pi}\Lambda)^\frac{1}{q^k}$; by Remark \ref{c_k in K}, $c_k\in\F_q((u^\frac{1}{q^k}))$.
By Lemma \ref{coefficients of zetas} and Remark \ref{c_i valuation} respectively, we have the following inequalities for all $i\in\frac{1}{q^k}\mathbb{N}$, for all $k\in\mathbb{N}$:
\begin{align*}
&\deg\left((\zeta^{(-k)})_{(i)}\right)\leq\log_q(i+1)+k+g+d+1, &v\left(c_k\right)\geq k-1+v(\tilde{\pi})=:k'.
\end{align*}

Fix a positive integer $n$. Since $\exp^*(\zeta)=0$, for any arbitrarily large $N$ we can choose a positive integer $m\geq n$ such that $v\left(\sum_{k=0}^m c_k\zeta^{(-k)}\right)\geq N$.
For all $k\leq m$ we can write the following, where the index $i$ varies among $\frac{1}{q^m}\Z$:
\[c_k=\sum_{i\geq k'}\lambda_{k,i} u^i\in\F_q\left(\left(u^{\frac{1}{q^m}}\right)\right)\text{ with }\lambda_{k,i}\in\F_q.\]
Let's rearrange $\sum_{k=0}^m c_k\zeta^{(-k)}$, with the indexes $i$ and $j$ varying among $\frac{1}{q^m}\mathbb{Z}$:
\begin{align*}
    \sum_{k=0}^m\sum_{j\geq k'} \lambda_{k,j}\zeta^{(-k)}u^j&=\sum_{k=0}^m\sum_{j\geq k'}\lambda_{k,j}\sum_{i\geq0}\left(\zeta^{(-k)}\right)_{(i)}u^{i+j}\\
    &=\sum_{i\geq0}\left(\sum_{k=0}^m\sum_{j=k'}^i\lambda_{k,j}\left(\zeta^{(-k)}\right)_{(i-j)}\right)u^i.
\end{align*}
Since $v\left(\sum_{k=0}^m c_k\zeta^{(-k)}\right)\geq N$, we get that, for $i\in\frac{1}{q^m}\Z$ and $i<N$:
\[\sum_{k=0}^m\sum_{j=k'}^i\lambda_{k,j}\left(\zeta^{(-k)}\right)_{(i-j)}=0.\]
Using this result and Lemma \ref{coefficients of zetas}, the evaluation $\sum_{k=0}^m c_k\zeta^{(-k)}(\Xi^{(-n)})$ can be rearranged as follows:
\begin{align*}
\sum_{k=0}^m\sum_{j\geq k'}\lambda_{k,j}\zeta^{(-k)}(\Xi^{(-n)})u^j
=&\sum_{k=0}^m\sum_{j\geq k'}\lambda_{k,j}\sum_{i\geq0}\left(\zeta^{(-k)}\right)_{(i)}(\Xi^{(-n)}) u^{i+j}\\
=&\sum_{i\geq0}\left(\sum_{k=0}^m\sum_{j=k'}^i\lambda_{k,j}\left(\zeta^{(-k)}\right)_{(i-j)}\right)(\Xi^{(-n)})u^i\\
=&\sum_{i\geq N}\left(\sum_{k=0}^m\sum_{j=k'}^i\lambda_{k,j}\left(\zeta^{(-k)}\right)_{(i-j)}\right)(\Xi^{(-n)})u^i.
\end{align*}
For $i-j,k\geq0$, since $j\geq k'\geq v(\tilde{\pi})-1$, and since $\log_q(x)\leq x$ for all $x>0$, we have:
\[\deg\left((\zeta^{(-k)})_{(i-j)}\right)\leq \log_q(i-j+1)+k+g+d+1\leq i+k+g+d+3-v(\tilde{\pi})=:i+C,\]
so each summand has valuation at least $i-\frac{i+C}{q^n}\geq N-\frac{N+C}{q^n}$, which tends to infinity as $N$ tends to infinity. Since $m=m(N)$ depends on $N$ and tends to infinity as $N$ does, we have: 
\[0=\lim_{N\to\infty}\sum_{k=0}^{m(N)} c_k\zeta^{(-k)}(\Xi^{(-n)})=\lim_{m\to\infty}\sum_{k=0}^m c_k\zeta^{(-k)}(\Xi^{(-n)})=\sum_{k=0}^n c_k\zeta^{(-k)}(\Xi^{(-n)}),\] 
where we used that $\zeta^{(-k)}(\Xi^{(-n)})=0$ for $k>n$ by Corollary \ref{zeta zero}. This concludes the proof.
\end{proof}

\begin{prop}\label{a_I/pi}
The following identity holds in $\C\hat\otimes K$:
\[\frac{\left((a_I\tilde{\pi}^{-1}\otimes1)\zeta_I\right)^{(-1)}}{(a_I\tilde{\pi}^{-1}\otimes1)\zeta_I}=f_*,\]
\end{prop}
\begin{proof}
From the definition of $\gamma_I$ we have $\frac{\zeta_I}{\zeta_I^{(1)}}=(\gamma_I\otimes1)^{q-1}f_*^{(1)}$. Since $\Lambda=a_I^{-1}I$ and $\tilde{\pi}\Lambda=\gamma_I^{-1}I$, we deduce $\gamma_I=\frac{a_I}{\tilde{\pi}}$ up to a factor in $\F_q^\times$.
\end{proof}
\begin{cor}\label{pi in K}The element $\tilde{\pi}^{q-1}$ is contained in $\K$.
\end{cor}
\begin{proof} The element $\frac{(a_I\otimes1)^{q-1}f_*^{(1)}\zeta_I^{(1)}}{\zeta_I}\in\K\hat\otimes K$ is equal to $(\tilde{\pi}\otimes1)^{q-1}$.\end{proof}

Equivalently, we can say that $(a_I\tilde{\pi}^{-1}\otimes1)\zeta_I$ is an ``eigenfunction" for the dual Drinfeld module $\phi^*$, in the following sense.

\begin{teo}\label{dual special function}
    The following identity holds for all $a\in A$:
    \[\phi^*_a\left((a_I\tilde{\pi}^{-1}\otimes1)\zeta_I\right)=(1\otimes a)(a_I\tilde{\pi}^{-1}\otimes1)\zeta_I.\]
\end{teo}
\begin{proof}
    In the proof of Proposition \ref{adjoint Drinfeld module}, we showed that, for all $a\in A$, $(1\otimes a)\zeta=\phi^*_a(\zeta)$, where $\zeta=(\gamma_I\otimes1)\zeta_I=(a_I\tilde{\pi}^{-1}\otimes1)\zeta_I$ up to a factor in $\F_q^\times$.
\end{proof}

We can finally state and prove a stronger version of Theorem \ref{functional identity weak}.

\begin{teo}\label{functional identity}
    The following functional identity is well posed and true in $\K\hat\otimes K$:
    \[\zeta_I=-(a_I^{-1}\otimes a_I)\prod_{i\geq0}\left((\tilde{\pi}^{1-q}\otimes1)f_*^{(1)}\right)^{(i)}.\]
\end{teo}
\begin{proof}
    Using the partial version of this theorem, we deduce the following identity in $\O_\K\hat\otimes K$:
    \[\zeta_I=-(a_I^{-1}\otimes a_I)\prod_{i\geq0}\left((\lambda^{1-q}\otimes1)f'_{\bar{I},*}{}^{(1)}\right)^{(i)},\]
    where $f'_{\bar{I},*}$ is a scalar multiple of $f_{\bar{I},*}$, and $\lambda\in\O_\K$ is some constant. We deduce:
    \[\frac{\zeta_I}{\zeta_I^{(1)}}=(a_I^{q-1}\otimes 1)(\lambda^{1-q}\otimes1)f'_{\bar{I},*}{}^{(1)}.\]
    On the other hand, by Proposition \ref{a_I/pi}, we know that
    \[\frac{\zeta_I}{\zeta_I^{(1)}}=\left(\frac{a_I}{\tilde{\pi}}\otimes1\right)^{q-1}f_*^{(1)},\]
    hence $(\lambda^{1-q}\otimes1)f'_{I,*}{}^{(1)}=(\tilde{\pi}^{1-q}\otimes1)f_*^{(1)}$ and we get the desired identity.
\end{proof}

\subsection{A stronger restatement of the main theorem}

Using the results of the previous subsection, and Remark \ref{oss f_I}, we can prove Theorem \ref{Sf module}, as stated in the introduction. It is a stronger version of Theorem \ref{Sf module weak} with an explicit proportionality constant and with a more natural dependence on the period lattice.

First, we prove the following results.

\begin{lemma}\label{lemma h_I,J}
    Fix a nonzero ideal $J<A$, with degree $d_J$. Then, for all ideal classes $\bar{I}\in Cl(A)$, there is some rational function $h_{J,\bar{I}}$ on $X_H$ with sign $1$ and divisor:
    \[\Div(h_{J,\bar{I}})=V_{\bar{I},*}^{(1)}+V_{\bar{J}+\bar{I}}-J-\Xi-(2g-d_J-1)\infty.\]
    Moreover, the functions $\{h_{J,\bar{I}}\}_{\bar{I}\in Cl(A)}$ are all conjugated by the action of $\G(H/K)$.
\end{lemma}
\begin{proof}
    Fix some nonzero ideal $I < A$ with ideal class $\bar{I}$, and call $d_I$ its degree. Define $D:=J+\Xi+(2g-d_J-1)\infty$ and consider the divisor $D-V_{\bar{I},*}^{(1)}$: we want to prove that it is linearly equivalent to $V_{\bar{J}+\bar{I}}$. First of all, its degree is $g$, hence it is linearly equivalent to some effective divisor $W$. Moreover, we have the following linear equivalences:
    \begin{align*}
        \red_\K(W)&\sim\red_\K(D)-\red_\K(V_{\bar{I},*})
        \sim (J+I)+(g-d_J-d_I)\infty\sim \red_\K(V_{\bar{J}+\bar{I}});\\
        W-W^{(1)}&\sim(D-D^{(1)})-(V_{\bar{I},*}-V_{\bar{I},*}^{(1)})^{(1)}\sim(\Xi-\Xi^{(1)})-(\infty-\Xi)^{(1)}\sim\Xi-\infty\\
        &\sim V_{\bar{J}+\bar{I}}-V_{\bar{J}+\bar{I}}^{(1)}.
    \end{align*}
By Proposition \ref{con}, the two conditions imply that $W=V_{\bar{J}+\bar{I}}$.

By Remark \ref{Hayes}, the divisors $\{V_{\bar{I},*}^{(1)}+V_{\bar{I}+\bar{J}}-D\}_{\bar{I}\in Cl(A)}$ are $H$-rational. Moreover, by the same reasoning as Remarks \ref{Hayes} and \ref{Hayes2}, they are all conjugated by the action of $\G(H/K)$, therefore for all $\bar{I}\in Cl(A)$ there is a unique function $h_{J,\bar{I}}\in H(X)$ with divisor $V_{\bar{I},*}^{(1)}+V_{\bar{I}+\bar{J}}-D$ and sign $1$, and they are all conjugated by $\G(H/K)$ up to a scalar factor. We just need to prove that for all $\sigma\in\G(H/K)$, $\sgn(h_{J,\bar{A}}^\sigma)=1$. Fix some $a\in J$ of positive degree and sign $1$, and define $s:=(1\otimes a)(1-a^{-1}\otimes a)h_{J,\bar{A}}$: it suffices to prove that $\sgn(s^\sigma)=1$ for all $\sigma\in\G(H/K)$. Since $s$ has poles only at $\infty$, it can be written as a finite sum $\sum_{i=0}^d h_i\otimes a_i\in H\otimes A$ for some nonnegative integer $d$, where the $a_i$'s have sign $1$ and strictly increasing degree, and $h_d\neq0$. Since $\sgn(s)=1$, we deduce $h_d=1$, hence for all $\sigma\in\G(H/K)$ $\sgn(s^\sigma)=h_d^\sigma=1$.
\end{proof}

\begin{prop}\label{quotient of zetas}
    For any pair of ideals $I,J<A$, with degrees respectively $d_I$ and $d_J$, the quotient $\frac{\zeta_I}{\zeta_J}$ is contained in $H(X)\subseteq\K\hat\otimes K$, with divisor $V_{\bar{I},*}^{(1)}-V_{\bar{J},*}^{(1)}+I-J+(d_J-d_I)\infty$. Moreover, $\red_u\left(\sgn\left(\frac{\zeta_I}{\zeta_J}\right)\right)=1$.
\end{prop}
\begin{proof}
    Recall that for all integers $m\gg0$ there is a function $\delta_{\bar{J},m}\in\K(X)$ with divisor $V_{\bar{J},m}+V_{\bar{J},*,m}-2g\infty$. Since the sequence $(V_{\bar{J},m}+V_{\bar{J},*,m})_m$ converges to $V_{\bar{J}}^{(1)}+V_{\bar{J},*}^{(1)}$ in $X^{[2g]}(\K)$, by Proposition \ref{convergence of functions and divisors} we can choose each $\delta_{\bar{J},m}$ so that the sequence $(\delta_{\bar{J},m})_m$ converges to $\delta_{\bar{J}}^{(1)}$ in $K((u))$. In $K((u))$, $\frac{\zeta_I}{\zeta_J}\delta_{\bar{J}}^{(1)}$ is the limit of the sequence $\left(\frac{\zeta_{I,m}}{\zeta_{J,m}}\delta_{\bar{J},m}\right)_m$, and the divisor of the $m$-th element of the sequence is:\[V_{\bar{I},*,m}-V_{\bar{J},*,m}+I-J+(d_J-d_I)\infty+\Div(\delta_{\bar{J},m})=V_{\bar{I},*,m}+V_{\bar{J},m}+I-J-(2g+d_I-d_J)\infty.\] By Proposition \ref{convergence of functions and divisors}, $\frac{\zeta_I}{\zeta_J}\delta_{\bar{J}}^{(1)}$ is rational, with divisor:\[\left(\lim_m (V_{\bar{I},*,m}+V_{\bar{J},m}+I+d_J\infty)\right)-J-(2g+d_I)\infty=V_{\bar{I},*}^{(1)}+V_{\bar{J}}^{(1)}+I-J-(2g+d_I-d_J)\infty.\]In particular, $\frac{\zeta_I}{\zeta_J}$ is rational, with divisor:
    \[V_{\bar{I},*}^{(1)}+V_{\bar{J}}^{(1)}+I-J-(2g+d_I-d_J)\infty-\Div(\delta_{\bar{J}}^{(1)})=V_{\bar{I},*}^{(1)}-V_{\bar{J},*}^{(1)}+I-J+(d_J-d_I)\infty.\]
    Since the divisor is $H$-rational and $\frac{\zeta_I}{\zeta_J}(\Xi)=\frac{\zeta_I(\Xi)}{\zeta_J(\Xi)}=1$, $\frac{\zeta_I}{\zeta_J}\in H(X)$. To study the sign, note first that if we have a sequence $(h_m=\sum_{i=0}^k c_{m,i}\otimes a_i)_m$ converging to $h=\sum_{i=0}^k c_i\otimes a_i$ in $\C\otimes A$, where the $a_i$'s are independent and with strictly increasing degree, and the degree of $h_m$ at $\infty$ is eventually equal to the degree of $h$ at $\infty$ (i.e. $c_k\neq0$), we have that $\lim_m\sgn(h_m)=\lim_m c_{m,k}\sgn(a_k)=c_k\sgn(a_k)=\sgn(h).$ In particular, we deduce that $\lim_m\sgn(\delta_{\bar{J},m})=\sgn(\delta_{\bar{J}})^q=1$. If we fix any nonzero element $c\in J$, of degree $d_c$, both the rational functions $\left(\frac{\zeta_{I,m}}{\zeta_{J,m}}\delta_{\bar{J},m}(1\otimes c)\right)_m$ and their limit $\frac{\zeta_I}{\zeta_J}\delta_{\bar{J}}^{(1)}(1\otimes c)$ have only a pole at $\infty$, of degree $2g+d_I+(d_c-d_J)$, hence they belong to $\C\otimes A$: we deduce that $\sgn\left(\frac{\zeta_I}{\zeta_J}\right)=\lim_m\sgn\left(\frac{\zeta_{I,m}}{\zeta_{J,m}}\right)$. Fix an $\F_q$-basis $(a_i)_i$ of $I$, with strictly increasing degrees and sign $1$. We have:
    \begin{align*}\sgn(\zeta_{I,m})=&\sum_{a\in I(j_m)}a^{-1}\sgn(a)=-\sum_{a\in I(<j_m)}(a_{m+1}-a)^{-1}=-a_{m+1}^{-1}\sum_{i\geq0}\sum_{a\in I(<j_m)}\frac{a^i}{a_{m+1}^i}\\
    =&-a_{m+1}^{-1}\sum_{i\geq0}\sum_{e_1+\cdots+e_m=i}\lambda(e_1,\dots,e_m)\left(\frac{a_1}{a_{m+1}}\right)^{e_1}\cdots\left(\frac{a_m}{a_{m+1}}\right)^{e_m},\end{align*} 
    where $\lambda(e_1,\dots,e_m)\in\F_q$ is a certain coefficient. By Lemma \ref{S_n,k}, if $\lambda(e_1,\dots,e_m)\neq0$, we must have $e_j\geq q^j-q^{j-1}$ for $j=1,\dots,m$. In particular, since the norms of the elements $\left(\frac{a_j}{a_{m+1}}\right)_j$ are strictly increasing and less than 1, the unique summand of maximum norm corresponds to the $m$-uple $(e_j)_j=(q^j-q^{j-1})_j$. Since $\lambda((q^j-1)_j)=\binom{q^m-1}{q-1,\dots,q^m-q^{m-1}}=1$, we get that the first term in the expansion of $\sgn(\zeta_{I,m})$ in $\F_q[[u]]$ is $-u^M$ for some integer $M$. Since the same argument holds for $\zeta_{J,m}$, we obtain that $\sgn\left(\frac{\zeta_I}{\zeta_J}\right)=u^N+o(u^N)$ for some integer $N$.
\end{proof}

\begin{teo}\label{Sf module}
Let $\phi$ be a normalized Drinfeld module of rank $1$ with period lattice $\rho_I I$, where $\rho_I\in\mathbb{C}_\infty^\times$ and $I<A$ is a nonzero ideal. Fix an ideal $J<A$ of degree $d_J$ such that $J\Omega\cong A$, and denote by $h\in H(X)^\times$ the unique function with $\sgn(h)=1$ and $\Div(h)=V_{\bar{I},*}^{(1)}+V_{\bar{I}+\bar{J}}-J-\Xi-(2g-d_J-1)\infty$. The following $A$-submodules of $\C\hat\otimes A$ coincide:
\[\Sf(\phi)=\frac{(\rho_I\otimes1)h}{\zeta_I}(\F_q\otimes IJ).\]
\end{teo}
\begin{proof}
    Let's denote by $f$ the shtuka function relative to $\phi$ and $f_*$ the dual shtuka function relative to $\phi^*$.
    By Definition \ref{definition Lambda'},if we fix $a_I\in I$ of least degree, we have the equality $a_I\rho_I=\tilde{\pi}_\phi$ up to a factor in $\F_q^\times$, hence by Proposition \ref{a_I/pi} we have $\frac{\left((\rho_I^{-1}\otimes1)\zeta_I\right)^{(-1)}}{(\rho_I^{-1}\otimes1)\zeta_I}=f_*$. On the other hand, by Remark \ref{oss f_I} and Proposition \ref{Lambda=I} respectively, we have the identities $\Div(f)=V_{\bar{I}+\bar{J}}^{(1)}-V_{\bar{I}+\bar{J}}+\Xi-\infty$ and $\Div(f_*)=V_{\bar{I},*}-V_{\bar{I},*}^{(1)}+\Xi-\infty$, hence:
    \[\Div\left(\frac{h^{(1)}}{h}\right)=V_{\bar{I},*}^{(2)}+V_{\bar{I}+\bar{J}}^{(1)}-\Xi^{(1)}-V_{\bar{I},*}^{(1)}-V_{\bar{I}+\bar{J}}+\Xi=\Div\left(\frac{f}{f_*^{(1)}}\right);\]
    since the rational functions $\frac{h^{(1)}}{h}$ and $\frac{f}{f_*^{(1)}}$ both have sign $1$, they coincide. In particular, we have that:
    \[\left(\frac{(\rho_I\otimes1)h}{\zeta_I}\right)^{(1)}\left(\frac{(\rho_I\otimes1)h}{\zeta_I}\right)^{-1}=\frac{h^{(1)}}{h}f_*^{(1)}=f,\]
    hence $\frac{(\rho_I\otimes1)h}{\zeta_I}\in (\F_q\otimes K)\Sf(\phi)$.
    By Theorem \ref{Sf module weak}, the $A$-module $\Sf(\phi)\subseteq\C\hat\otimes A$ coincides with $(\F_q\otimes IJ)\frac{\delta_{\bar{I}+\bar{J}}(\lambda\otimes1)}{f_{\bar{I}+\bar{J},*}\zeta_{IJ}}$, where $\lambda\in\C$ is some nonzero constant. To conclude the proof we just need to show that the product $\left(\frac{\delta_{\bar{I}+\bar{J}}}{f_{\bar{I}+\bar{J},*}\zeta_{IJ}}\right)\left(\frac{h}{\zeta_I}\right)^{-1}$ is a constant, i.e. it is a rational function with trivial divisor. By Proposition \ref{quotient of zetas}, $\frac{\zeta_{IJ}}{\zeta_I}$ is a rational function, and we get the following:
    \begin{align*}
        &\Div\left(\frac{\delta_{\bar{I}+\bar{J}}}{f_{\bar{I}+\bar{J},*}\zeta_{IJ}}\frac{\zeta_I}{h}\right)=\Div(\delta_{\bar{I}+\bar{J}})-\Div(f_{\bar{I}+\bar{J},*})+\Div\left(\frac{\zeta_{I}}{\zeta_{IJ}}\right)-\Div(h)\\
        =&(V_{\bar{I}+\bar{J}}+V_{\bar{I}+\bar{J},*}-2g\infty)+(-V_{\bar{I}+\bar{J},*}+V_{\bar{I}+\bar{J},*}^{(1)}-\Xi+\infty)+\\
        +&(V_{\bar{I},*}^{(1)}-V_{\bar{I}+\bar{J},*}^{(1)}-J+d_J\infty)+(-V_{\bar{I},*}^{(1)}-V_{\bar{I}+\bar{J}}+J+\Xi+(2g-d_J-1)\infty)=0. \tag*{\qedhere}\end{align*}
\end{proof}
In the notation of the previous theorem and of Lemma \ref{lemma h_I,J} we give the next definition.

\begin{Def}\label{pseudocanonical special function}
    Fix an element $a_I\in I$ of least degree. We define the \emph{pseudocanonical special function} as follows:\[\omega_{\phi,J}:=-\frac{(\pi_\phi\otimes1)h_{J,\bar{I}}}{\zeta_I}(a_I^{-1}\otimes a_I)\in\C\hat\otimes K.\]
\end{Def}
\begin{oss}
    Since $\tilde{\pi}_\phi=\rho_I a_I$ up to a factor in $\F_q^\times$, by Theorem \ref{Sf module} $\omega_{\phi,J}$ belongs to $\Sf(\phi)(\F_q\otimes K)$, and is well defined up to a factor in $\F_q^\times$. While $\omega_{\phi,J}$ depends on the choice of the ideal $J$, it does not depend on the choice of $I$ but only on its class.
\end{oss}

\section{An identity involving special functions and zeta functions \`a la Anderson}\label{Anderson zeta}

In this last section, we prove the generalization of \cite[Thm. 7.3]{Green} in the form of Theorem \ref{xi equation}. We define a zeta function ``\`a la Anderson" $\xi_\phi$ relative to a Drinfeld module $\phi$ of rank 1, and then relate it to a pseudocanonical special function.

Fix a normalized Drinfeld module of rank $1$ $\phi:A\to \K[\tau]$ with period lattice $\rho_I I$, where $I<A$ is a nonzero ideal -- so that its Drinfeld divisor is $V_{\bar{I}-\bar{\Omega}}$ by Remark \ref{oss f_I} -- and call $f$ its shtuka function. We can extend $\phi$ to ideals, sending $J=(a,b)< A$ to the unique monic generator $\phi_J$ of the left ideal $(\phi_a,\phi_b)<\K[\tau]$, following a construction of Hayes (see \cite{Hayes}).

\begin{Def}
Fix $\omega\in \Sf(\phi)$, and for all nonzero ideals $J< A$ define $\chi_\phi(J):=\frac{\phi_J(\omega)}{\omega}$. 
\end{Def}
\begin{oss}
The previous definition does not depend on the choice of $\omega$. For any $a\in A$ and any nonzero ideal $J< A$, since $\phi_{aJ}=\phi_J\circ\phi_a$ we have that
\[\chi_\phi(aJ)=\frac{\phi_{aJ}(\omega)}{\omega}=\frac{\phi_J\circ\phi_a(\omega)}{\omega}=\frac{\phi_J((1\otimes a)\omega)}{\omega}=(1\otimes a)\frac{\phi_J(\omega)}{\omega}=\chi_\phi(a)\chi_\phi(J).\]
It's easy to check that we can extend $\chi_\phi$ to all fractional ideals in a unique way such that for all $a\in K$ and for all fractional ideals $J$ we have $\chi_\phi(a)\chi_\phi(J)=\chi_\phi(aJ)$.
\end{oss}
\begin{prop}\label{divisor chi_I}
For all nonzero ideals $J< A$, $\chi_\phi(J)$ is a rational function on $X_\K$ with sign $1$, and $\Div(\chi_\phi(J))=V_{\bar{I}-\bar{\Omega}-\bar{J}}+J-V_{\bar{I}-\bar{\Omega}}-\deg(J)\infty$.
\end{prop}
\begin{proof}
Consider $H^0(X_\K\setminus\{\infty\},V_{\bar{I}-\bar{\Omega}})=\bigcup_{k\geq0}H^0(X_\K,V_{\bar{I}-\bar{\Omega}}+k\infty)$, which admits as a flag base $\{f\cdots f^{(k)}\}_{k\geq-1}$. For a fixed non principal ideal $J=(a,b)< A$, if we write $\phi_J=\sum_{i=0}^{\deg(J)} (c_i\otimes1)\tau^i$, we get:
\[\chi_\phi(J)=\frac{\phi_J(\omega)}{\omega}=\sum_{i=0}^{\deg(J)} c_i f\cdots f^{(i-1)}\in H^0(X_\K,V_{\bar{I}-\bar{\Omega}}+\deg(J)\infty).\]
Since $c_{\deg(J)}f\cdots f^{(\deg(J)-1)}$ is the summand of highest degree at $\infty$, and since $c_{\deg(J)}=1$ and $\sgn(f)=1$, we have:\[\sgn(\chi_\phi(J))=c_{\deg(J)}\sgn(f)\cdots\sgn(f)^{(\deg(J)-1)}=1.\]
Moreover, if we write $\phi_J=\psi_1\circ\phi_a+\psi_2\circ\phi_b$ for some $\psi_1,\psi_2\in \K[\tau]$, we get:
\[\chi_\phi(J)=\frac{\phi_J(\omega)}{\omega}=\frac{\psi_1\circ\phi_a(\omega)+\psi_2\circ\phi_b(\omega)}{\omega}=(1\otimes a)\frac{\psi_1(\omega)}{\omega}+(1\otimes b)\frac{\psi_2(\omega)}{\omega}.\]
Since $1\otimes a,1\otimes b\in H^0(X_\K\setminus\{\infty\},-J)$, $\frac{\psi_1(\omega)}{\omega},\frac{\psi_2(\omega)}{\omega}\in H^0(X_\K\setminus\{\infty\},V_{\bar{I}-\bar{\Omega}})$, and the degree of $\chi_\phi(J)$ is $\deg(J)$, we get $\chi_\phi(J)\in H^0(X_\K,V_{\bar{I}-\bar{\Omega}}-J+\deg(J)\infty)$. The divisor $D:=V_{\bar{I}-\bar{\Omega}}+\deg(J)\infty-J$ has degree $g$ and is such that:
\begin{align*}
    &[D-D^{(1)}]=[V_{\bar{I}-\bar{\Omega}}-V_{\bar{I}-\bar{\Omega}}^{(1)}]=[\Xi-\infty]&\red([D-g\infty])=\bar{I}-\bar{\Omega}-\bar{J}.
\end{align*}
By Proposition \ref{con}, $D\sim V_{\bar{I}-\bar{\Omega}-\bar{J}}$, and $h^0(V_{\bar{I}-\bar{\Omega}-\bar{J}})=1$, hence the divisor $\Div(\chi_\phi(J))$ is equal to $V_{\bar{I}-\bar{\Omega}-\bar{J}}+J-V_{\bar{I}-\bar{\Omega}}-\deg(J)\infty$.
\end{proof}

Let's include an easy Lemma.

\begin{lemma}\label{ker phi_J} For any nonzero ideal $J<A$, as a $\F_q$-linear endomorphism of $\C$, $\ker(\phi_J)=\exp_\phi(\rho_I IJ^{-1})$.
\end{lemma}
\begin{proof}
We can fix two generators $a,b$ of $J$. By definition of $\phi_J$, there are $\psi_1,\psi_2$ in $\K[\tau]$ such that $\phi_J=\psi_1\circ\phi_a+\psi_2\circ\phi_b$. Since for any $x\in\rho_I IJ^{-1}$, $ax$ and $bx$ belong to $\rho_I I=\ker(\exp_\phi)$, we have:
\[\phi_J\circ\exp_\phi(x)= \psi_1\circ\phi_a\circ\exp_\phi(x)+ \psi_2\circ\phi_b\circ\exp_\phi(x)= \psi_1\circ\exp_\phi(ax)+
\psi_2\circ\exp_\phi(bx)=0.\]
In particular, $\exp_\phi(\rho_I IJ^{-1})\subseteq \ker(\phi_J)$. On the other hand, $\exp_\phi(\rho_I IJ^{-1})$ is isomorphic as an $\F_q$-vector space to $\rho_I IJ^{-1}/\rho_I I$, which has cardinality $q^{\deg(J)}$. Since $\phi_J$ is a polynomial of degree $q^{\deg(J)}$, we get the equality $\exp_\phi(\rho_I IJ^{-1})=\ker(\phi_J)$.
\end{proof}

\begin{prop}\label{red_u(chi)}
For all nonzero ideals $J<A$, $\red_u\left(\chi_\phi(J)(\Xi)\right)=1$.
\end{prop}
\begin{proof}
If $J=(a)$, where $a\in A\setminus\{0\}$ has sign $1$, $\red_u\left(\chi_\phi(J)(\Xi)\right)=\red_u(a\otimes1)=1$. In particular, for any nonzero ideal $J$ and for all $a\in A\setminus\{0\}$,$\red_u\left(\chi_\phi(J)(\Xi)\right)$ is equal to £$\red_u\left(\chi_\phi(aJ)(\Xi)\right)$.
If we write $\phi_J=\sum_{i=0}^{\deg(J)}c_i\tau^i$, we have:
\[c_0=\left(\sum_{i=0}^{\deg(J)}(c_i\otimes1)f\cdots f^{(i-1)}\right)(\Xi)=\chi_\phi(J)(\Xi)\text{ and }c_{\deg(J)}=1.\] By Lemma \ref{ker phi_J}, $\ker(\phi_J)=\exp_\phi(\rho_I IJ^{-1})$; let's fix a set $\{\alpha_i\}_i\subseteq IJ^{-1}$ such that the elements $\{\exp_\phi(\rho_I\alpha_i)\}_i$ are representatives for the quotient $(\ker(\phi_J)\setminus\{0\})/\F_q^\times$. We have:
\[\chi_\phi(J)(\Xi)=c_0=\prod_{\beta\in\ker(\phi_J)\setminus\{0\}}\beta=\prod_{\lambda\in\F_q^\times}\prod_i\lambda\exp_\phi(\rho_I\alpha_i)=\prod_i-(\exp_\phi(\rho_I\alpha_i))^{q-1}.\]
Let's write $\exp_\phi=\sum_{i\geq0}e_i\tau^i$; by Corollary \ref{pi in K} $\rho_I^{q-1}\in\K$, so by Lemma \ref{log_L} the coefficients of $\log_\phi$ are in $\K$, hence the same holds for the $e_i$'s. In particular, for all $i\in\mathbb{N}$ the element $\gamma_i:=\frac{\exp_\phi(\rho_I\alpha_i)}{\rho_I}=\alpha_i\sum_j e_j (\rho_I\alpha_i)^{q^j-1}$ is contained in $\K$, which means that $\red_u(\gamma_i^{q-1})=1$. Since the cardinality of $\{\alpha_i\}_i$ is $\frac{q^{\deg(J)}-1}{q-1}$, we have:
\begin{align*}
    \red_u\left(\chi_\phi(J)(\Xi)\right)&=\prod_i-\red_u\left(\rho_I^{q-1}\right)\red_u(\gamma_i)^{q-1}=\red_u\left(-\rho_I^{q-1}\right)^\frac{q^{\deg(J)}-1}{q-1}\\
    &=\red_u\left(-\rho_I^{q-1}\right)^{\deg(J)}.
\end{align*}
For $d\gg0$, we can pick $a,b\in A\setminus\{0\}$ with $d=\deg(a)=\deg(b)-1$. We have:
\begin{align*}
    1&=\frac{\red_u\left(\chi_\phi(bJ)(\Xi)\right)}{\red_u\left(\chi_\phi(aJ)(\Xi)\right)}=\frac{\red_u\left(-\rho_I^{q-1}\right)^{\deg(bJ)}}{\red_u\left(-\rho_I^{q-1}\right)^{\deg(aJ)}}=\red_u\left(-\rho_I^{q-1}\right)^{\deg(b)-\deg(a)}\\
    &=\red_u\left(-\rho_I^{q-1}\right),
\end{align*}
hence $\red_u\left(\chi_\phi(J)(\Xi)\right)=1$.
\end{proof}

\begin{oss}\label{proportionality of periods}Following a known construction due to Hayes, for any nonzero ideal $J<A$ we denote by $\phi^J:A\to\K[\tau]$ the unique ring homomorphism such that for all $a\in A$ $\phi^J_a\circ\phi_J=\phi_J\circ\phi_a$. As easily shown in \cite{Goss}[Subsection 4.9], the morphism $\phi^J:A\to\K[\tau]$ is a normalized Drinfeld module of rank $1$ and its associated lattice is $\chi_\phi(J)(\Xi)\rho_I J^{-1}I$.\end{oss}

\begin{Def}\label{def anderson}
    Call $a_I\in I$ the unique nonzero element of least degree with sign $1$. The \emph{Anderson zeta function} relative to the Drinfeld module $\phi$ is defined as:
    \[\xi_\phi:=\sum_{J< a_I^{-1}I}\frac{\chi_\phi(J)}{\chi_\phi(J)(\Xi)}\in \K\hat\otimes K.\]
\end{Def}
\begin{oss}\label{oss anderson}
    In the previous definition, $\xi_\phi$ only depends on the ideal class $\bar{I}$ of $I$.
    We can also write $\xi_\phi=a_I\otimes a_I^{-1}\sum_{J<I}\frac{\chi_\phi(J)}{\chi_\phi(J)(\Xi)}$.
\end{oss}
In the notation of Lemma \ref{lemma h_I,J}, we finally state and prove the main theorem of this section.
\begin{teo}\label{xi equation}
The Anderson zeta function $\xi_{\phi}$ is a well defined element of $\K\hat\otimes K$. Moreover, if we fix an ideal $J<A$ such that $J\Omega\cong A$, the following identity holds:
\[\xi_{\phi}\omega_{\phi,J}=(\tilde{\pi}_\phi\otimes1)\sum_{\sigma\in\G(H/K)}h_{J,\bar{A}}^\sigma.\]
\end{teo}
\begin{proof}
Let's fix representatives $J_i< I$ for each ideal class 
$\bar{J_i}\in Cl(A)$. To prove convergence we rearrange the terms:
\begin{align*}
    &\sum_{\tilde{J}<I}\frac{\chi_\phi(\tilde{J})}{\chi_\phi(\tilde{J})(\Xi)}
    =\sum_i\sum_{\substack{\tilde{J}<I\\\tilde{J}\cong J_i}}\frac{\chi_\phi(\tilde{J})}{\chi_\phi(\tilde{J})(\Xi)}=\sum_i\sum_{\substack{a\in J_i^{-1}I\setminus\{0\}\\\sgn(a)=1}}\frac{\chi_\phi(a J_i)}{\chi_\phi(a J_i)(\Xi)}\\
    =&-\sum_i\sum_{a\in J_i^{-1}I\setminus\{0\}}\frac{\chi_\phi(a J_i)}{\chi_\phi(a J_i)(\Xi)}=-\sum_i\left(\frac{\chi_\phi(J_i)}{\chi_\phi(J_i)(\Xi)}\sum_{a\in J_i^{-1}I\setminus\{0\}}\frac{\chi_\phi(a)}{\chi_\phi(a)(\Xi)}\right)\\
    =&-\sum_i\frac{\chi_\phi(J_i)}{\chi_\phi(J_i)(\Xi)}\zeta_{J_i^{-1}I}.
\end{align*}
Since $\omega_{\phi,J}$ is defined as $-\frac{(\tilde{\pi}_\phi\otimes1)h_{J,\bar{I}}}{\zeta_I}(a_I^{-1}\otimes a_I)$, we get: 
\[\xi_\phi\omega_{\phi,J}=(\tilde{\pi}_\phi\otimes 1)\sum_i\frac{\chi_\phi(J_i)}{\chi_\phi(J_i)(\Xi)}\frac{\zeta_{J_i^{-1}I}}{\zeta_I}h_{J,\bar{I}}.\] 
For all $i$, by Proposition \ref{quotient of zetas} $\frac{\zeta_{J_i^{-1}I}}{\zeta_I}\in H(X)$; the evaluation at $\Xi$ of the rational function $\frac{\chi_\phi(J_i)}{\chi_\phi(J_i)(\Xi)}$ is $1$, and by Proposition \ref{divisor chi_I} and Remark \ref{Hayes} its divisor is $H$-rational, hence the function is $H$-rational; finally, $h_{J_i,\bar{I}}\in H(X)$ by Lemma \ref{lemma h_I,J}. We deduce that for all $i$ the summand $\frac{\chi_\phi(J_i)}{\chi_\phi(J_i)(\Xi)}\frac{\zeta_{J_i^{-1}I}}{\zeta_I}h_{J,\bar{I}}$ belongs to $H(X)$, and by Proposition \ref{divisor chi_I} and Lemma \ref{lemma h_I,J} its divisor is:
\begin{align*}
    &(V_{\bar{I}-\bar{J_i},*}^{(1)}-V_{\bar{I},*}^{(1)}-J_i)+(V_{\bar{I}+\bar{J}-\bar{J_i}}+J_i-V_{\bar{I}+\bar{J}})+\\
    +&(V_{\bar{I},*}^{(1)}+V_{\bar{I}+\bar{J}}-J-\Xi)-(2g-\deg(J)-1)\infty\\
    =&V_{\bar{I}-\bar{J_i},*}^{(1)}+V_{\bar{I}+\bar{J}-\bar{J_i}}-J-\Xi-(2g-\deg(J)-1)\infty=\Div(h_{J,\bar{I}-\bar{J_i}}),
\end{align*} 
hence $\frac{\zeta_{J_i^{-1}I}}{\zeta_I}\frac{\chi_\phi(J_i)}{\chi_\phi(J_i)(\Xi)}h_{J,\bar{I}}=(\alpha_i\otimes1)h_{J,\bar{I}-\bar{J_i}}$ for some $\alpha_i\in H^\times$. Since $\sgn(h_{J,\bar{I}-\bar{J_i}})=1$, $\alpha_i$ is equal to the sign of the summand. For all $i$ denote $\rho_{J_i^{-1}I}:=\chi_\phi(J)(\Xi)\rho_I$: By Remark \ref{proportionality of periods}, if $\psi$ is the unique normalized Drinfeld module of rank $1$ whose period lattice $\Lambda_\psi$ is isomorphic to $J_i^{-1}I$, then $\Lambda_\psi=\rho_{J_i^{-1}I}J_i^{-1}I$. If we denote $s_i:=\chi_\phi(J_i)(\Xi)^{-1}\zeta_I^{-1}\zeta_{J_i^{-1}I}$, by Proposition \ref{a_I/pi} we have:
\begin{align*}
    &\frac{s_i^{(1)}}{s_i}=\frac{\left(\chi_\phi(J_i)(\Xi)^{-1}\zeta_{J_i^{-1}I}\right)^{(1)}}{\chi_\phi(J_i)(\Xi)^{-1}\zeta_{J_i^{-1}I}}\cdot\frac{\zeta_I}{\zeta_I^{(1)}} =\frac{\left(\rho_{J_i^{-1}I}^{-1}\zeta_{J_i^{-1}I}\right)^{(1)}}{\rho_{J_i^{-1}I}^{-1}\zeta_{J_i^{-1}I}}\cdot\frac{\rho_I^{-1}\zeta_I}{\left(\rho_I^{-1}\zeta_I\right)^{(1)}}=\left(\frac{f_{\bar{I},*}}{f_{\bar{I}-\bar{J_i},*}}\right)^{(1)}\\
    \Rightarrow&\sgn(s_i)^{q-1}=\sgn\left(\frac{s_i^{(1)}}{s_i}\right)=\frac{\sgn(f_{\bar{I},*})^q}{\sgn(f_{\bar{I}-\bar{J},*})^q}=1;
\end{align*}
in particular, \[\alpha_i=\sgn(s_i)\cdot\sgn(\chi_\phi(J_i))\cdot\sgn(h_{J,\bar{I}})=\sgn(s_i)\in\F_q.\]
On the other hand, by Proposition \ref{quotient of zetas} and Proposition \ref{red_u(chi)} we have:
\[\red_u(\sgn(s_i))=\red_u(\sgn(\chi_\phi(J_i)(\Xi)))^{-1}\cdot\red_u\left(\sgn\left(\frac{\zeta_{J_i^{-1}I}}{\zeta_I}\right)\right)=1,\]
hence $\alpha_i=1$.
We can rewrite:
\[\xi_\phi\omega_{\phi,J}=(\tilde{\pi}_\phi\otimes1)\sum_i h_{J,\bar{I}-\bar{J_i}}=(\tilde{\pi}_\phi\otimes1)\sum_{\sigma\in\G(H/K)}h_{J,\bar{A}}^\sigma. \tag*{\qedhere}\]
\end{proof}
\begin{oss}
Evaluating at $\Xi$ we get $\xi_{\bar{I}}(\Xi)=\# Cl(A)$ modulo the characteristic of $\F_q$.
\end{oss}

\section{Acknowledgements}
Thanks to Nathan Green for his availability to discuss the early drafts of this work. 

\noindent
Thanks to Quentin Gazda, Andreas Maurischat, Tuan Ngo Dac, and Dinesh Thakur for the interesting conversations. 

\noindent
Thanks to Federico Pellarin, for his guidance and patience, both of which have been crucial for the writing of this paper.

\noindent
The author is also thankful to the Department of Mathematics Guido Castelnuovo, where he has carried out his Ph.D. studies. This work has been produced as part of the Ph.D. thesis of the author.

\printbibliography

\end{document}